\numberwithin{equation}{section}
\newtheorem{thm}{Theorem}[section]
\newtheorem{lem}[thm]{Lemma}
\newtheorem{cor}[thm]{Corollary}
\newtheorem{prop}[thm]{Proposition}
\theoremstyle{definition}
\newtheorem{defin}[thm]{Definition}
\newtheorem{rmk}[thm]{Remark}
\DeclareMathOperator*{\diam}{diam}
\DeclareMathOperator{\spt}{spt}
\DeclareMathOperator{\Dom}{Dom}
\DeclareMathOperator{\interior}{int}
\def\bdry{\partial}
\DeclareMathOperator*{\ch}{conv}
\DeclareMathOperator{\cl}{cl}
\DeclareMathOperator{\sgn}{sgn}
\renewcommand{\epsilon}{\varepsilon}
\newcommand{\N}{\mathbf{N}}
\newcommand{\Z}{\mathbf{Z}}
\newcommand{\R}{\mathbf{R}}
\renewcommand{\S}{\mathbf{S}}
\newcommand{\dist}{d}
\DeclareMathOperator*{\affdim}{dim}
\DeclareMathOperator*{\Lip}{Lip}
\newcommand{\Lipnorm}[2][]{\lVert {#2}\rVert_{\Lip{#1}}}
\newcommand{\Leb}[1]{\lvert {#1}\rvert_{\mathcal{L}}}
\newcommand{\norm}[1]{\lvert {#1}\rvert}
\newcommand{\subdiff}[3][]{\partial_{#1}{#2}\left(#3\right)} 
\newcommand{\p}{{\partial}}
\def\xbar{\bar{x}}
\def\ybar{\bar{y}}
\def\tbar{\bar{t}}
\def\gbar{\bar {g}}
\def\M{M}
\def\Mbar{\bar{M}}
\def\Cbar{\bar{C}}
\newcommand{\sing}[1][]{\Sigma_{#1}}
\newcommand{\euclidean}[2]{\langle{#1},{#2}\rangle}
\newcommand{\haus}[1][]{\mathcal{H}^{#1}}
\def\anglemin{\Theta} 
\def\outerdom{\Omega} 
\def\outertarget{\overline\Omega} 
\def\source{\spt{\mu}} 
\def\target{\spt{\nu}} 
\def\sourcemeas{\mu} 
\def\targetmeas{\nu} 
\def\sourceint{\left(\spt{\mu}\right)^{\interior}} 
\newcommand{\csubdiff}[2]{\partial_c{#1}(#2)} 
\newcommand{\cstarsubdiff}[2]{\partial_{c^*}{#1}(#2)} 
\newcommand{\coord}[2]{\left[#1\right]_{#2}} 
\newcommand{\cExp}[2]{exp^c_{#1}({#2})} 
\newcommand{\cstarExp}[2]{exp^{c^*}_{#1}({#2})} 
\newcommand{\Winfty}[2]{\mathcal{W}_\infty{\left(#1, #2\right)}} 
\newcommand{\nbhd}[1][]{\mathcal{N}_{#1}}
\newcommand{\nbhdof}[2][]{\nbhd[#1]{\left(#2\right)}}
\newcommand{\cotanspM}[1]{T^{\ast}_{#1}\M}
\newcommand{\tanspM}[1]{T_{#1}\M}
\newcommand{\cotanspMbar}[1]{T^{\ast}_{#1}\Mbar}
\newcommand{\tanspMbar}[1]{T_{#1}\Mbar}
\newcommand{\xprime}[1][]{x^{\prime}_{#1}}
\newcommand{\xbarprime}[1][]{\bar{x}^{\prime}_{#1}}
\newcommand{\ybarprime}[1][]{\bar{y}^{\prime}_{#1}}
\newcommand{\targetpiece}[1][]{\overline{\Omega}_{#1}}
\newcommand{\threshold}[1][]{\delta_{#1}}
\newcommand{\midheight}[1][]{a_{#1}}
\newcommand{\height}[1][]{d_{#1}}
\newcommand{\convpot}[1][]{u_{#1}}
\newcommand{\convpotdual}[1][]{u^{\ast}_{#1}}
\newcommand{\gradconvpot}[1][]{\nabla {u_{#1}}}
\newcommand{\heightfunc}[1][]{h_{#1}}
\newcommand{\heightfuncplus}[1][]{h_{#1}^{+}}
\newcommand{\heightfuncminus}[1][]{h_{#1}^{-}}
\newcommand{\heightfuncpm}[1][]{h_{#1}^{\pm}}
\newcommand{\alttargetpiece}[1][]{\overline{\Lambda}_{#1}}
\newcommand{\altsourcedom}[1][]{\Lambda_{#1}}
\newcommand{\convpotproj}[1][]{u_{#1}}
\newcommand{\convpotprojdual}[1][]{u^{\ast}_{#1}}
\newcommand{\vertaltsourcedom}[1][]{\Lambda^{#1}}
\newcommand{\perturbsing}[2][]{\Sigma_{#1}^{#2}}
\newcommand{\perturbmeas}[1][]{\nu^{#1}}
\newcommand{\perturbconvpot}[2][]{u_{#1}^{#2}}
\def\util{\tilde{u}}
\def\utiltil{\tilde{\tilde{u}}}
\begin{document}

\author{Jun Kitagawa}
\address{Department of Mathematics, Michigan State University, 619 Red Cedar Road,
East Lansing, MI 48824}
\email{kitagawa@math.msu.edu}

\author{Robert McCann}
\address{Department of Mathematics, University of Toronto, Toronto, Ontario, Canada, M5S 2E4}
\email{mccann@math.toronto.edu}
\title{Free discontinuities in optimal transport}
\subjclass[2010]{35J96}
\thanks{$^*$JK's research was supported in part by a Simons Foundation Travel Grant and National Science Foundation grant DMS-1700094. RM's research 
was supported in part by NSERC grants 217006-08 and -15 and by a 
Simons Foundation Fellowship.
Parts of this project were carried out while both authors were in residence at 
the Mathematical Sciences Reseach Institute in Berkeley CA  during the Fall 2013
program supported by National Science Foundation Grant No. 0932078 000,
and later at the Fields Insititute for the Mathematical Sciences in Toronto during Fall 2014.
\copyright \today}
\begin{abstract}
We prove a nonsmooth implicit function theorem applicable to the zero set of the difference of convex functions. This theorem is explicit and global: it gives a formula representing this zero set as a difference of convex functions which holds throughout the entire domain of the original functions. As applications, we prove results on the stability of singularities of envelopes of semi-convex functions, and solutions to optimal transport problems under appropriate perturbations, along with global structure theorems on certain discontinuities arising in optimal transport maps
for Ma-Trudinger-Wang costs. For targets whose components satisfy additional convexity, separation, multiplicity and affine independence assumptions we show these discontinuities occur on submanifolds of the appropriate codimension which are parameterized locally
as differences of convex functions (DC, hence $C^2$ rectifiable), and --- depending on the 
precise assumptions --- $C^{1,\alpha}$ smooth.  In this case the highest codimension submanifolds
 consists of isolated points, each uniquely identified by the (affinely independent) components of the target to which
it is transported.
\end{abstract}
\maketitle

\tableofcontents

\section{Introduction}

The question of regularity for maps solving the optimal transportation problem of Monge and Kantorovich is a 
celebrated problem \cite{Trudinger06} \cite{Villani03}.  
Under strong hypotheses relating the target's convexity to curvature
properties of the transportation cost,  optimal maps are known to be smooth,
following work of Caffarelli on quadratic costs \cite{Caffarelli92} and Ma, Trudinger, and 
Wang more generally \cite{MaTrudingerWang05}.
In the absence of such convexity and curvature properties,  much less is true.  Partial regularity results --- which
quantify the size of the singular set --- are available in at least three flavors. 
The set of discontinuities of an optimal map is known to be contained in the non-differentiabilities
of a (semi-)convex function,  hence to have Hausdorff dimension at most $n-1$ in $\R^n$. 
In fact, Zaj\'i\v cek \cite{Zajicek79} has shown
such discontinuities lie in a countable union of submanifolds parameterized as graphs of differences of convex functions 
 --- referred to as DC submanifolds hereafter. 
The {\em closure} of this set of discontinuities was shown to have zero volume by Figalli with Kim (for quadratic
costs \cite{FigalliKim10}) or with DePhilippis (for non-degenerate costs \cite{DePhilippisFigalli15}),  and is conjectured to have dimension at most $n-1$. 
However,  this conjecture has only been verified in the special case of a quadratic transportation cost on 
$\R^2$ \cite{Figalli10}. See related work of Chodosh et al \cite{ChodoshJainLindseyPanchevRubinstein15}
and Goldman and Otto \cite{GoldmanOtto17p}.  
The present manuscript is largely devoted to providing evidence for this conjecture in higher 
dimensions by providing concrete geometries in which it can be confirmed.
Typically these consist of transportation to a collection of disjoint
target components,  which we allow to be convex or non-convex.  This forces discontinuities along which the optimal
map tears the source measure into separate components,  one corresponding to each component of the target.
We study the regularity of such tears.  We show that when the target components can be separated by a hyperplane,
the corresponding tear is a DC hypersurface.  
For quadratic costs, when several tears meet,  their intersection is a DC submanifold of the appropriate
codimension provided the corresponding target components are affinely independent.  
When the corresponding target components are strictly convex,  we show the tears are $C^{1,\alpha}$ smooth,
and that the optimal maps are smooth on their complement.
We show stability of such tears when the data are subject to perturbations which are small in a sense made precise
below.

A core result of this paper is a nonsmooth version of the classical implicit function theorem for convex functions.
More specifically, we wish to write the set where two convex functions coincide as the graph of a DC function, where DC stands for difference of convex,
alternately denoted $c-c$ \cite{Gigli11} or $\Delta$-convex \cite{VeselyZajicek89} in some references. 
The idea of inverse and implicit 
function theorems have been explored in various nonsmooth settings, e.g. by Clarke \cite{Clarke76} and
Vesely and Zaj\'i\v cek \cite[Proposition 5.9]{VeselyZajicek89}; see also \cite{Warga78} 
\cite[Appendix]{McCann95} \cite[Theorem 10.50]{Villani09}. 
Two major aspects set apart the version we present here from previous theorems. The first is the explicit nature of the theorem: we are able to explicitly write down the function whose graph gives the coincidence set in terms of 
partial Legendre transforms of the original convex functions, thus we term this an ``explicit function theorem'' in contrast to the traditional implicit version. 
Second, our result is of a global, rather than a local nature: existing implicit function theorems generally state the existence of a neighborhood on which a surface can be written as the graph of a function, in our theorem we obtain that the domain of this function is actually the projection of the entire original domain on some hyperplane. Our method of proof relies on the construction of Alberti from \cite[Lemma 2.7]{Alberti94},  
foreshadowed in 
 Zaj\'i\v cek's work \cite{Zajicek79}.

Our interest in this theorem is motivated by its application
to the \emph{optimal transport} problem of Monge and Kantorovich mentioned above. 
Let $\outerdom$ and $\outertarget$ be compact subsets of $n$-dimensional Riemannian manifolds $(M, g)$ and $(\Mbar, \gbar)$ respectively, and a real valued \emph{cost function} $c\in C^4(\outerdom\times \outertarget)$. The optimal transport problem is: given any two probability measures $\sourcemeas$ and $\targetmeas$ on $\outerdom$ and $\outertarget$ respectively, find a measurable mapping $T: \source \to \target$ 
pushing $\mu$ forward to $\nu$ (denoted $ T_\#\sourcemeas=\targetmeas$), such that
\begin{align}
 \int_\outerdom c(x, T(x))\sourcemeas(dx)&=\inf_{S_\#\sourcemeas=\targetmeas} \int_\outerdom c(x, S(x))\sourcemeas(dx).\label{OT}\tag{OT}
\end{align}
The applications we present here concern the global structure of discontinuities in $T$, 
stability results for such tears, and the regularity of $T$ on their complement.
For the first application, we ask if there is some structure for these discontinuities when the support of the target measure is separated into two compact sets --- by a hyperplane (in appropriate coordinates). One would expect the source domain to be partitioned into two sets, which are then transported to each of the pieces in the target. Under suitable hypotheses we show this is the case, and the interface between these two pieces is actually a DC hypersurface (thus $C^2$ rectifiable) which can be parameterized as a globally Lipschitz graph.
In the second application, we consider a target measure consisting of several connected components. This should result in a transport map that must split mass amongst the pieces, and we investigate the structure and stability of this splitting. 
It turns out a stability result can be obtained when considering perturbations of the target measure under the Kantorovich-Rubinstein-Wasserstein $L^\infty$ metric ($\mathcal{W}_\infty$ in Definition \ref{def: W-infinity} below), along with an appropriate notion of affine independence for the pieces
(Definition \ref{defin: affine independence of sets} below). 
We also provide an example to illustrate this independence condition plays the role of an implicit function hypothesis and is crucial for stability.


The outline of the paper is as follows. In Section~\ref{section: explicit function theorem} we set up and prove the ``explicit function theorem'' for convex differences.
We then apply the explicit function theorem in Section~\ref{section: stability},
to show stability for singular points of envelopes of semi-convex functions under certain perturbations. In Section~\ref{section: stability of OT}, we recall some necessary background material concerning 
the optimal transport problem and begin to explore consequences of known regularity results in our setting.  
For the quadratic cost $c(x,\bar x) = -\langle x, \bar x \rangle$ on Euclidean space,
Section \ref{section: global quadratic} proves DC rectifiability of the (codimension $k$) tears 
along which the source is split into $k+1$ components whose images have affinely independent convex hulls.  
For $k=n$, Proposition \ref{prop: only one n+1 order singularity} shows the corresponding tear consists
of a single point.
Section \ref{section: smoother quadratic}
shows these tears are $C^{1,\alpha}$ provided the corresponding target components are strictly convex;  
in the simplest case $k=1$, 
a similar result was found by Chen \cite{Chen16p} simultaneously and independently of the present
manuscript: the main thrust of his work is to improve regularity of the tear to $C^{2,\alpha}$ when the pair of strictly convex target components are sufficiently far apart.
Smoothness of the map away from such tears is shown for Ma-Trudinger-Wang costs --- known
as MTW costs \cite{MaTrudingerWang05} \cite{TrudingerWang09} --- in Corollary \ref{cor: interior homeo/diffeo}.
Section \ref{section: global MTW} extends our DC rectifiability result for tears to MTW costs 
in the prototypical case $k=1$.  Section \ref{section: stability of tears} shows such tears are stable.
Lastly, we include an appendix presenting an example to show the affine independence of target measures components is necessary for stability.

Throughout this paper, for $1\leq i\leq n$ we will use the notation $\pi_{i}: \R^n\to\R^{i}$ to denote orthogonal projection onto the first $i$th coordinates, and $e_{i}$ for the $i$th unit coordinate vector. We also reserve the notation $A^{\cl}$, $A^{\interior}$, and $A^{\bdry}$ for the closure, interior, and boundary of a set $A$ respectively. Also, given any point $x\in\R^n$, we will write $x^i$ for the $i$th coordinate of $x$. $\haus[i]$ will refer to the $i$-dimensional Hausdorff measure of a set in Euclidean space and $\haus[i]_g$ will be the $i$-dimensional Hausdorff measure of a set defined using the distance derived from a Riemannian metric $g$. Finally, $\ch(A)$ denotes the \emph{closed} convex hull of a set $A$ while $\nbhdof[\epsilon]{A}=\{x\mid dist(x, A)\leq \epsilon\}$.

\section{An ``explicit function theorem'' for convex differences}\label{section: explicit function theorem}
For the remainder of the paper, by \emph{convex function} with no other qualifiers we will tacitly mean a \emph{closed, proper, convex function on $\R^n$} i.e., a function defined on $\R^n$ taking values in $\R\cup\{\infty\}$, whose epigraph is a non-empty, closed, convex set. If we refer to \emph{a convex function on $\Lambda$} for some set $\Lambda\subset \R^n$, this will mean a function satisfying the above definition
when it is extended lower semicontinuously to $\Lambda^{\cl}$ and
(re)defined to be $\infty$ on $(\R^n\setminus \Lambda)^{\interior}$.
Also, we will use the notations $\xprime:=\pi_{n-1}{(x)}$ and $A^\prime:=\pi_{n-1}{\left(A\right)}$ for any point $x\in \R^n$ and set $A\subset \R^n$. By the classical implicit function theorem, if $f$, $g: \R^n\to \R$ are smooth, the set $\{f=g\}$ is the graph of a smooth function of $n-1$ variables, near any point on the set where $\nabla f\neq \nabla g$. We aim to prove an analogue of this theorem, but for two convex functions without any assumptions of differentiability. In order to do so, we need an appropriate replacement for the inequality of gradients, which will be formulated in terms of the \emph{subdifferential}: recall for a convex function $u$ and $x_0$ in its domain,
\begin{align}\label{eqn: convex subdifferential}
 \subdiff{u}{x_0}:=\left\{\xbar\in\R^n\mid \euclidean{x-x_0}{\xbar}+u(x_0)\leq u(x),\ \forall x\right\},
\end{align}
while for a subset $A$ of its domain,
\begin{align*}
 \subdiff{u}{A}:=\bigcup_{x\in A}\subdiff{u}{x}.
\end{align*}
We also recall here the \emph{Legendre transform} of a (proper) convex function $u$ with effective domain $\Dom(u):=\{x\in\R^n\mid u(x)<\infty\}$ as the (closed, proper, convex) function $u^*: \R^n\to \R\cup \{\infty\}$ defined by
\begin{align}\label{eqn: legendre transform}
 u^*(\xbar):=\sup_{x\in \R^n}[\euclidean{x}{\xbar}-u(x)]=\sup_{x\in \Dom(u)}[\euclidean{x}{\xbar}-u(x)].
\end{align}
\begin{defin}[Separating hyperplane]\label{def: strongly separating hyperplane}
 If $\altsourcedom[+]$ and $\altsourcedom[-]$ are any two sets in $\R^n$ and $v$ is a fixed unit vector, recall that a hyperplane $\left\{x\in\R^n\mid\euclidean{x}{v}=\midheight\right\}$ is said to \emph{strongly separate $\altsourcedom[+]$ and $\altsourcedom[-]$ (with spacing $\height$)} if there exists a $\height>0$ such that
\begin{align*}
 \euclidean{x_1}{v}< \midheight-\height<\midheight+\height< \euclidean{x_2}{v}
\end{align*}
for any $x_1\in\altsourcedom[+]$ and $x_2\in\altsourcedom[-]$.
\end{defin}
Let us also recall some terminology on DC (difference of convex) functions here.
\begin{defin}[DC functions, mappings \cite{AmbrosioBertrand15, PerelmanUnpublished}]\label{defin: DC}
 A function $h: \Lambda\to\R$ on a convex domain $\Lambda\subset \R^n$ is said to be a \emph{DC function} if it can be written as the difference of two convex functions that are finite on $\Lambda$. A mapping from $\Lambda$ to a Euclidean space $\R^m$ is said to be a \emph{DC mapping} if each of its coordinate components is a DC function.
\end{defin}

The key hypothesis of our theorem is the strong separation of the subdifferentials of two convex functions. One feature that differentiates our theorem from the usual implicit function theorem is that we can actually write down the function whose graph gives the equality set between the two convex functions we consider, and explicitly state the domain of this function. Thus we term this an ``explicit function theorem.'' We first state the following Theorem \ref{thm: explicit function theorem} in terms of the subdifferential of the envelope of two convex functions, and formulate the actual explicit function theorem as Corollary \ref{cor: explicit function theorem} below.

\begin{thm}[DC tears]\label{thm: explicit function theorem}
 Let $\convpot[+]$ and $\convpot[-]$ be convex functions, $\altsourcedom\subset \Dom(u)\subset\R^n$ a convex (but not necessarily bounded) set, and $\alttargetpiece[+]$, $\alttargetpiece[-]$ compact subsets of $\R^n$ with $\subdiff{\convpot[+]}{\altsourcedom}\subset\alttargetpiece[+]$ and $\subdiff{\convpot[-]}{\altsourcedom}\subset\alttargetpiece[-]$. We define
 \begin{align*}
 u:&=\max{\left\{\convpot[+], \convpot[-]\right\}},\\
 \sing[]:&=\left\{x\in\altsourcedom^{\cl}\mid\subdiff{u}{x}\cap\alttargetpiece[+]\neq\emptyset\text{ and }\subdiff{u}{x}\cap\alttargetpiece[-]\neq\emptyset\right\},\\
 C_+:&=\left\{x\in\altsourcedom^{\cl}\mid \subdiff{u}{x}\cap\alttargetpiece[-]=\emptyset\right\},\\
 C_-:&=\left\{x\in\altsourcedom^{\cl}\mid \subdiff{u}{x}\cap\alttargetpiece[+]=\emptyset\right\}.
\end{align*} 
Also, suppose that (after a rotation of coordinates) for some $\midheight[0]\in\R$ the hyperplane $\Pi:=\left\{x^n=\midheight[0]\right\}$ strongly separates $\alttargetpiece[+]$ and $\alttargetpiece[-]$ with spacing $\height[0]>0$.

Writing $\Lambda^{\prime}:=\pi_{n-1}{(\Lambda)}$, define the functions $\heightfuncpm: \R^{n-1}\to \R$, $\heightfunc[]: (\Lambda^{\prime})^{\cl}\to\R$ by 
\begin{align}\label{eqn: explicit formula}
\heightfuncpm(\xprime):&= 
\begin{cases}
-\frac{\convpotprojdual[\xprime](\midheight[0]\mp\height[0])}{2\height[0]},&\xprime\in (\Lambda^{\prime})^{\cl},\\
\infty,&\xprime\in \R^{n-1}\setminus(\Lambda^{\prime})^{\cl}
\end{cases}
\\
 \heightfunc[](\xprime):&=\heightfuncplus[](\xprime)-\heightfuncminus[](\xprime),
\end{align}
where $\convpotprojdual[\xprime]$ is the Legendre transform of the function $\convpotproj[\xprime](t):=\convpot(\xprime, t)$ of one variable. Then $\heightfuncpm$ are both convex on $\R^{n-1}$ and finite on $\Lambda^\prime$ (so in particular, $\heightfunc$ is a DC function), with 
\begin{align*}
 \sing&=\{(\xprime, \heightfunc(\xprime))\mid \xprime\in \Lambda^\prime\}\cap\altsourcedom^{\cl},\\
C_+ &=\{(\xprime, x^n)\mid \xprime\in \Lambda^\prime,\ \heightfunc(\xprime)<x^n\}\cap\altsourcedom^{\cl},\\
 C_-&=\{(\xprime, x^n)\mid \xprime\in \Lambda^\prime,\ \heightfunc(\xprime)>x^n\}\cap\altsourcedom^{\cl}.
\end{align*}

Moreover, 
\begin{align}\label{eqn: lipschitz bound}
\Lipnorm[((\Lambda^{\prime})^{\cl})]{\heightfunc[]}\leq \tan{\anglemin}
\le \frac{\diam[\pi_{n-1}(\overline \Lambda_+ \cup \overline \Lambda_-)]}{2d_0}
\end{align} 
where
\begin{align*}
 \cos{\anglemin}:=\inf_{\xbar_+\in \alttargetpiece[+], \xbar_-\in \alttargetpiece[-]}\euclidean{\frac{\xbar_+-\xbar_-}{\norm{\xbar_+-\xbar_-}}}{e_{n}}.
\end{align*}
\end{thm}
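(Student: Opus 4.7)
The strategy is to reduce everything to a family of one-dimensional problems indexed by $\xprime\in\Lambda'$ and to read off both the explicit formula and the Lipschitz bound from Legendre duality on each slice. For each fixed $\xprime\in\Lambda'$ consider the one-variable functions $t\mapsto\convpot[\pm](\xprime,t)$, which are convex in $t$. Projecting the hypothesis $\subdiff{\convpot[\pm]}{\altsourcedom}\subset\alttargetpiece[\pm]$ onto the $n$-th coordinate and invoking the strict separation shows that every one-dimensional subgradient of $\convpot[+](\xprime,\cdot)$ lies in $(-\infty,\midheight[0]-\height[0]]$, while every subgradient of $\convpot[-](\xprime,\cdot)$ lies in $[\midheight[0]+\height[0],+\infty)$. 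Consequently $t\mapsto\convpot[+](\xprime,t)-\convpot[-](\xprime,t)$ is strictly decreasing with slope at most $-2\height[0]$, so on each slice there is at most one crossing point $t^{\ast}(\xprime)$ at which the two functions coincide; this point separates the part of the slice lying in $C_{+}$ (below) from the part lying in $C_{-}$ (above), with the crossing itself belonging to $\sing$.

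Next I would identify $t^{\ast}(\xprime)$ with $\heightfunc[](\xprime)$ via Legendre duality. At $t=t^{\ast}(\xprime)$ the slope of $u(\xprime,\cdot)=\max(\convpot[+](\xprime,\cdot),\convpot[-](\xprime,\cdot))$ jumps from $\leq\midheight[0]-\height[0]$ to $\geq\midheight[0]+\height[0]$, so convexity forces the one-dimensional subdifferential of $u(\xprime,\cdot)$ at $t^{\ast}$ to contain the entire interval $[\midheight[0]-\height[0],\midheight[0]+\height[0]]$. Hence for $s\in\{\midheight[0]-\height[0],\midheight[0]+\height[0]\}$ the supremum in $\convpotprojdual[\xprime](s)=\sup_{t}[st-u(\xprime,t)]$ is attained at $t=t^{\ast}(\xprime)$, yielding
\begin{align*}
\convpotprojdual[\xprime](\midheight[0]\pm\height[0])=(\midheight[0]\pm\height[0])\,t^{\ast}(\xprime)-u(\xprime,t^{\ast}(\xprime));
\end{align*}
subtracting the two cases and dividing by $2\height[0]$ produces $t^{\ast}(\xprime)=\heightfuncplus[](\xprime)-\heightfuncminus[](\xprime)=\heightfunc[](\xprime)$. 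Combined with the partition of slices from the first paragraph, this yields the claimed graph-, hypograph-, and epigraph-descriptions of $\sing$, $C_{+}$, and $C_{-}$. Convexity of $\heightfuncpm$ follows from the observation that $(\xprime,t)\mapsto st-u(\xprime,t)$ is jointly concave (an affine function plus the negative of a convex one), so its partial supremum in $t$, namely $\convpotprojdual[\xprime](s)$, is concave in $\xprime$; multiplication by $-1/(2\height[0])<0$ then makes $\heightfuncpm$ convex and finite on $\Lambda'$, and extension by $+\infty$ off $(\Lambda')^{\cl}$ preserves properness and lower semicontinuity, so $\heightfunc[]=\heightfuncplus[]-\heightfuncminus[]$ is DC.

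For the Lipschitz bound I would work directly on $\sing$. Given $\xprime[1],\xprime[2]\in\Lambda'$, set $x_{i}:=(\xprime[i],\heightfunc[](\xprime[i]))\in\sing$ and choose subgradients $\bar{x}_{+}^{(1)}\in\subdiff{\convpot[+]}{x_{1}}\cap\alttargetpiece[+]$ and $\bar{x}_{-}^{(2)}\in\subdiff{\convpot[-]}{x_{2}}\cap\alttargetpiece[-]$, which exist at interior points of $\altsourcedom$ and extend to the boundary by a closedness-of-graph limiting argument. Since $\convpot[+](x_{i})=\convpot[-](x_{i})$ on $\sing$, the pair of convexity inequalities
\begin{align*}
\convpot[+](x_{2})&\geq\convpot[+](x_{1})+\euclidean{\bar{x}_{+}^{(1)}}{x_{2}-x_{1}},\\
\convpot[-](x_{1})&\geq\convpot[-](x_{2})+\euclidean{\bar{x}_{-}^{(2)}}{x_{1}-x_{2}}
\end{align*}
combine after cancellation of the common values $\convpot[\pm](x_{i})$ to give $\euclidean{\bar{x}_{-}^{(2)}-\bar{x}_{+}^{(1)}}{x_{2}-x_{1}}\geq 0$. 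Writing $\bar{x}_{-}^{(2)}-\bar{x}_{+}^{(1)}=(d',d^{n})$ with $d^{n}\geq 2\height[0]$ and decomposing $x_{2}-x_{1}=(\xprime[2]-\xprime[1],\,\heightfunc[](\xprime[2])-\heightfunc[](\xprime[1]))$, one reads off $\heightfunc[](\xprime[1])-\heightfunc[](\xprime[2])\leq(|d'|/d^{n})|\xprime[1]-\xprime[2]|$; swapping the subgradient choices produces the reverse inequality, and recognising $|d'|/d^{n}$ as the tangent of the angle that $\bar{x}_{-}-\bar{x}_{+}$ makes with $e_{n}$ yields $\Lipnorm[(\Lambda')^{\cl}]{\heightfunc[]}\leq\tan\anglemin$. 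The coarser estimate $\tan\anglemin\leq\diam(\pi_{n-1}(\alttargetpiece[+]\cup\alttargetpiece[-]))/(2\height[0])$ then follows from $|d'|\leq\diam(\pi_{n-1}(\alttargetpiece[+]\cup\alttargetpiece[-]))$ and $d^{n}\geq 2\height[0]$. The step I expect to demand the most care is the Legendre identification at the boundary of $\Lambda'$: one must verify that $\heightfuncpm$ extended by $+\infty$ off $(\Lambda')^{\cl}$ remain proper closed convex, and that the formula continues to describe $\sing$ even for those $\xprime$ whose crossing $t^{\ast}(\xprime)$ falls outside the effective slice of $\altsourcedom$ (so the graph of $\heightfunc[]$ simply exits $\altsourcedom^{\cl}$ there and contributes nothing to $\sing$). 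Everything else unfolds mechanically once the one-dimensional picture and the explicit formula are in place.
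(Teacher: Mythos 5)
Your slice-wise reduction, the Legendre identification of the crossing point with $\heightfunc[](\xprime)$, and the joint-concavity argument for convexity of $\heightfuncpm$ all match the paper's proof. Your Lipschitz bound is obtained by a slightly different route (adding the two subgradient inequalities at a pair of points of $\sing[]$ rather than the paper's cone-at-a-vertex argument), but both are the same monotonicity estimate and yours is, if anything, more direct; the passage from $\tan\anglemin$ to the diameter bound is also fine. Two issues remain. The minor one is orientation: you place the subgradients of $\convpot[+](\xprime,\cdot)$ in $(-\infty,\midheight[0]-\height[0]]$ and conclude that $C_+$ is the \emph{subgraph} of $\heightfunc[]$, which contradicts the theorem's assertion that $C_+$ is the epigraph. (The paper's Definition~\ref{def: strongly separating hyperplane} is itself written with the opposite inequality to what its proof uses, so this is a convention slip rather than a mathematical error, but as written your conclusion swaps $C_+$ and $C_-$.)

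The genuine gap is the case you flag at the end but do not carry out: slices $\vertaltsourcedom[\xprime]$ that contain no crossing point. The theorem asserts that $\heightfuncpm$ are \emph{finite on all of} $\Lambda'$ and that $C_\pm$ equal the epi/subgraph of $\heightfunc[]$ intersected with $\altsourcedom^{\cl}$; for a slice on which, say, $u\equiv\convpot[+]$ throughout, you must show (i) that $\convpotprojdual[\xprime](\midheight[0]\pm\height[0])$ are still finite --- which is not automatic, since the supremum defining the Legendre transform ranges over all $t\in\R$, including $t$ outside $\vertaltsourcedom[\xprime]$ where the hypotheses on $\subdiff{\convpot[\pm]}{\altsourcedom}$ give no control --- and (ii) that the resulting value $\heightfunc[](\xprime)$ lies on the correct side of $\vertaltsourcedom[\xprime]$, so that the entire slice is correctly classified. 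This is where the paper spends the bulk of its effort: it uses connectivity of $\subdiff{\convpotproj[\xprime]}{\vertaltsourcedom[\xprime]}$ to see that one of $\convpot[\pm]$ dominates on the whole slice, then a growth estimate (comparing linear lower and upper bounds with slopes $\midheight[0]-\height[0]$ and $\midheight[0]+\height[0]$) to produce, via the intermediate value theorem, a coincidence point $t_0$ \emph{outside} the slice at which $[\midheight[0]-\height[0],\midheight[0]+\height[0]]\subset\subdiff{\convpotproj[\xprime]}{t_0}$, whence $\heightfunc[](\xprime)=t_0\leq\inf\vertaltsourcedom[\xprime]$ (or $\geq\sup$). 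Relatedly, your assertion that extending $\heightfuncpm$ by $+\infty$ off $(\Lambda')^{\cl}$ ``preserves properness and lower semicontinuity'' is exactly what needs proof, since the paper's standing convention is that convex functions are closed; the paper gives a separate lower-semicontinuity argument for this. Without these pieces the statement is only established for those $\xprime$ whose slice actually meets $\sing[]$.
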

\begin{rmk}\label{rmk: extendability}
 Both functions $u_\pm$ can be extended in a continuous way to all of $\altsourcedom^{\cl}$. Indeed, since $\subdiff{\convpot[\pm]}{\altsourcedom}$ is bounded, we can exhaust $\altsourcedom$ by compact sets and apply \cite[Theorem 24.7]{Rockafellar70} to find that $\convpot[\pm]$ are uniformly Lipschitz on $\altsourcedom$; in particular they can be extended continuously to $\altsourcedom^{\cl}$ with finite values. Moreover, by compactness of $\alttargetpiece[\pm]$ we see that $\subdiff{u_\pm}{x}\neq\emptyset$ for any $x\in\altsourcedom^\partial$ as well.
\end{rmk}
We will need the following classical result on subdifferentials of envelopes of convex functions (which can be obtained for example, by \cite[Proposition 2.3.12]{Clarke90} applied to convex functions).
\begin{lem}\label{lem: dm}
 If $\convpot=\max_{i}\convpot[i]$ for some finite collection of convex functions $u_i$, then
\begin{align*}
 \subdiff{\convpot}{x_0}=\ch{\left(\bigcup_{i\in I}\subdiff{\convpot[i]}{x_0}\right)}
\end{align*}
where $I:=\left\{i\mid \convpot(x_0)=\convpot[i](x_0)\right\}$.
\end{lem}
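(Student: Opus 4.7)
The plan is to prove the two set inclusions separately. The inclusion $\supseteq$ is immediate from the definitions: if $\bar x \in \subdiff{\convpot[i]}{x_0}$ for some $i \in I$, then using $\convpot \geq \convpot[i]$ pointwise and $\convpot(x_0) = \convpot[i](x_0)$,
\begin{equation*}
\convpot(x) \;\geq\; \convpot[i](x) \;\geq\; \convpot[i](x_0) + \euclidean{\bar x}{x - x_0} \;=\; \convpot(x_0) + \euclidean{\bar x}{x - x_0}
\end{equation*}
for every $x$, so $\bar x \in \subdiff{\convpot}{x_0}$. Since $\subdiff{\convpot}{x_0}$ is closed and convex (being an intersection of closed half-spaces via \eqref{eqn: convex subdifferential}), it contains the closed convex hull on the right-hand side.

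For the nontrivial inclusion $\subseteq$, I would argue by contradiction via support functions. Suppose $\bar x_0 \in \subdiff{\convpot}{x_0}$ does not lie in $K := \ch\bigl(\bigcup_{i \in I}\subdiff{\convpot[i]}{x_0}\bigr)$, which is a closed convex set. Since $K$ and $\{\bar x_0\}$ are disjoint closed convex sets with $\{\bar x_0\}$ compact, the Hahn--Banach separation theorem produces a unit vector $v \in \R^n$ and $\epsilon > 0$ with
\begin{equation*}
\euclidean{\bar x_0}{v} \;>\; \epsilon + \sigma_K(v) \;=\; \epsilon + \max_{i \in I}\sigma_{\subdiff{\convpot[i]}{x_0}}(v),
\end{equation*}
where $\sigma_A(v) := \sup_{\xi \in A}\euclidean{\xi}{v}$ is the support function of $A$.

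The linchpin is the identity
\begin{equation*}
\convpot'(x_0;v) \;=\; \max_{i \in I}\convpot[i]'(x_0;v),
\end{equation*}
where $f'(x_0;v) := \lim_{t \downarrow 0}\frac{f(x_0+tv)-f(x_0)}{t}$ denotes the one-sided directional derivative (which exists, possibly infinite, for any proper convex $f$ finite at $x_0$). The inequality $\geq$ follows directly from $\convpot \geq \convpot[i]$ and $\convpot(x_0) = \convpot[i](x_0)$ for $i \in I$. For $\leq$, I would invoke the finiteness of the collection and the strict inequality $\convpot[i](x_0) < \convpot(x_0)$ for each $i \notin I$: since $\convpot$ is continuous at $x_0$ (subdifferentiability forces $x_0$ into the interior of $\Dom(\convpot)$) and each $\convpot[i]$ is at worst lower semicontinuous, this strict gap persists on a neighborhood of $x_0$, so $\convpot(x_0 + tv) = \max_{i \in I}\convpot[i](x_0+tv)$ for all sufficiently small $t > 0$, from which the bound follows upon dividing by $t$ and passing to the limit.

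Combining with the classical support-function identity $\convpot'(x_0;v) = \sigma_{\subdiff{\convpot}{x_0}}(v)$ for proper convex $\convpot$ with $\subdiff{\convpot}{x_0} \neq \emptyset$ (see e.g.\ \cite[Theorem 23.4]{Rockafellar70}), we conclude
\begin{equation*}
\euclidean{\bar x_0}{v} \;\leq\; \sigma_{\subdiff{\convpot}{x_0}}(v) \;=\; \convpot'(x_0;v) \;=\; \max_{i \in I}\convpot[i]'(x_0;v) \;=\; \max_{i \in I}\sigma_{\subdiff{\convpot[i]}{x_0}}(v),
\end{equation*}
contradicting the strict separation. The main obstacle is the $\leq$ direction of the directional derivative identity; the subtlety is merely bookkeeping to exclude the indices $i \notin I$ in a neighborhood of $x_0$, which is where finiteness of the index set and the strict gap in function values are essential.
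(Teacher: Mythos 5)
Your route is genuinely different from the paper's: the paper does not prove this lemma at all, but simply cites Clarke's Proposition 2.3.12 (the generalized-gradient rule for a pointwise maximum of \emph{locally Lipschitz} functions) specialized to convex functions, whereas you give the classical self-contained Dubovitskii--Milyutin argument via separation, support functions and one-sided directional derivatives. The skeleton is the right one: the inclusion $\supseteq$ is exactly as you say, and for $\subseteq$ the chain $\euclidean{\bar x_0}{v}\le \convpot'(x_0;v)=\max_{i\in I}\convpot[i]'(x_0;v)\le\max_{i\in I}\sigma_{\subdiff{\convpot[i]}{x_0}}(v)$ against the strict separation is the standard proof (and the first inequality needs only the subgradient inequality, not Theorem 23.4).

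However, as written the justification of the key localization step has genuine gaps, all tied to the fact that the paper's convex functions are extended-real-valued. First, ``subdifferentiability forces $x_0$ into the interior of $\Dom(\convpot)$'' is false: the indicator function of a closed convex set has a nonempty subdifferential (the normal cone) at every boundary point. Second, your semicontinuity bookkeeping runs the wrong way: to keep an inactive index $i'\notin I$ inactive along $x_0+tv$ for small $t>0$ you must prevent $\convpot[i']$ from jumping \emph{up} at $x_0$, i.e.\ you need upper semicontinuity of $\convpot[i']$ there; lower semicontinuity (the paper's standing convention) gives nothing. Third, Rockafellar's Theorem 23.4 says $\sigma_{\subdiff{\convpot[i]}{x_0}}$ is the \emph{closure} of $\convpot[i]'(x_0;\cdot)$, so the inequality $\convpot[i]'(x_0;v)\le\sigma_{\subdiff{\convpot[i]}{x_0}}(v)$ you need can fail when $x_0\notin(\Dom\convpot[i])^{\interior}$. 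These are not cosmetic: with the paper's conventions the formula itself can fail, e.g.\ on $\R$ take $\convpot[1]\equiv 0$ and $\convpot[2]=-1$ on $(-\infty,0]$, $+\infty$ on $(0,\infty)$; at $x_0=0$ one has $I=\{1\}$ and $\subdiff{\convpot[1]}{0}=\{0\}$, yet $\subdiff{\convpot}{0}=[0,\infty)$. The fix is to assume, as the Clarke citation and every application in the paper implicitly do, that each $\convpot[i]$ is finite on a neighborhood of $x_0$ (hence locally Lipschitz); then $x_0$ lies in the interior of each domain, inactive indices stay inactive nearby, each $\convpot[i]'(x_0;\cdot)$ is finite and continuous so it coincides with the support function of $\subdiff{\convpot[i]}{x_0}$, and your argument goes through verbatim.
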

Using this result, we find the following reformulation of Theorem \ref{thm: explicit function theorem}.
\begin{cor}[Explicit function theorem]\label{cor: explicit function theorem}
Under the same notation and hypotheses as Theorem \ref{thm: explicit function theorem}, 
\begin{align*}
 \left\{x\in\altsourcedom^{\cl}\mid \convpot[+](x)=\convpot[-](x)\right\}&=\{(\xprime, \heightfunc(\xprime))\mid \xprime\in (\Lambda^\prime)^{\cl}\}\cap\altsourcedom^{\cl},\\
 \left\{x\in\altsourcedom^{\cl}\mid \convpot[+](x)>\convpot[-](x)\right\}&=\{(\xprime, x^n)\mid \xprime\in (\Lambda^\prime)^{\cl},\ \heightfunc(\xprime)<x^n\}\cap\altsourcedom^{\cl},\\
 \left\{x\in\altsourcedom^{\cl}\mid \convpot[+](x)<\convpot[-](x)\right\}&=\{(\xprime, x^n)\mid \xprime\in (\Lambda^\prime)^{\cl},\ \heightfunc(\xprime)>x^n\}\cap\altsourcedom^{\cl}.
\end{align*}
\end{cor}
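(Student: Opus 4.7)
The plan is to deduce Corollary~\ref{cor: explicit function theorem} as an almost immediate consequence of Theorem~\ref{thm: explicit function theorem}, by showing that the three subdifferential-defined sets $\sing$, $C_+$, $C_-$ of the theorem coincide, as subsets of $\altsourcedom^{\cl}$, with the pointwise-comparison sets $\{\convpot[+] = \convpot[-]\}$, $\{\convpot[+] > \convpot[-]\}$, and $\{\convpot[+] < \convpot[-]\}$, respectively. Once this identification is in hand, the geometric formulas of the corollary follow by direct substitution into the conclusions of the theorem. The key ingredients are Lemma~\ref{lem: dm} applied to $u = \max\{\convpot[+], \convpot[-]\}$, the strong separation hypothesis (which forces $\alttargetpiece[+] \cap \alttargetpiece[-] = \emptyset$), and Remark~\ref{rmk: extendability} (which guarantees $\subdiff{\convpot[\pm]}{x} \neq \emptyset$ for every $x \in \altsourcedom^{\cl}$).

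Writing $I(x) \subseteq \{+,-\}$ for the set of indices realizing the maximum defining $u(x)$, Lemma~\ref{lem: dm} gives $\subdiff{u}{x} = \ch\bigl(\bigcup_{i \in I(x)} \subdiff{\convpot[i]}{x}\bigr)$. I then split into three cases according to the sign of $\convpot[+](x) - \convpot[-](x)$. If $\convpot[+](x) > \convpot[-](x)$, then $I(x) = \{+\}$ and $\subdiff{u}{x} = \subdiff{\convpot[+]}{x} \subseteq \alttargetpiece[+]$, which is disjoint from $\alttargetpiece[-]$, so $x \in C_+$. The case $\convpot[+](x) < \convpot[-](x)$ is symmetric and yields $x \in C_-$. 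If $\convpot[+](x) = \convpot[-](x)$, both indices lie in $I(x)$, so $\subdiff{u}{x}$ contains both nonempty sets $\subdiff{\convpot[+]}{x} \subseteq \alttargetpiece[+]$ and $\subdiff{\convpot[-]}{x} \subseteq \alttargetpiece[-]$, placing $x$ in $\sing$.

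The reverse inclusions are automatic because the three cases above partition $\altsourcedom^{\cl}$, while the sets $\sing$, $C_+$, $C_-$ are themselves pairwise disjoint: the disjointness of $\sing$ from $C_\pm$ is immediate from the defining conditions, and the disjointness of $C_+$ from $C_-$ uses that $\subdiff{u}{x}$ must meet at least one of $\alttargetpiece[\pm]$ (since it contains some nonempty $\subdiff{\convpot[i]}{x} \subseteq \alttargetpiece[i]$) together with $\alttargetpiece[+] \cap \alttargetpiece[-] = \emptyset$. I anticipate no serious obstacle; the entire argument is bookkeeping, and the only step requiring any care is the appeal to Remark~\ref{rmk: extendability} that $\subdiff{\convpot[\pm]}{x}$ is nonempty throughout $\altsourcedom^{\cl}$, which is what allows the case $\convpot[+](x) = \convpot[-](x)$ to actually place $x$ in $\sing$ rather than leaving $\subdiff{u}{x}$ possibly unable to witness both sides.
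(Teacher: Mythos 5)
Your argument is correct and is precisely what the paper's one-line proof intends: the paper deduces the corollary by combining Lemma~\ref{lem: dm} with Remark~\ref{rmk: extendability} to identify $\{\convpot[+]>\convpot[-]\}$, $\{\convpot[+]=\convpot[-]\}$, $\{\convpot[+]<\convpot[-]\}$ with $C_+$, $\sing$, $C_-$ exactly as you spell out. Your case analysis and the disjointness/partition bookkeeping simply make that "immediate" step explicit, so this is the same route as the paper.
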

\begin{proof}
Lemma~\ref{lem: dm} combined with Remark \ref{rmk: extendability} immediately yields the corollary from Theorem \ref{thm: explicit function theorem}.
\end{proof}
\begin{proof}[Proof of Theorem~\ref{thm: explicit function theorem}]
 Fix any such strongly separating hyperplane, 
 by our assumptions we have $\alttargetpiece[+]\subset \left\{x^n>\midheight[0]+\height[0]\right\}$ and $\alttargetpiece[-]\subset \left\{x^n<\midheight[0]-\height[0]\right\}$. 
%
 Also, if $\xprime\in\Lambda^\prime$, let us write $\vertaltsourcedom[{\xprime[]}]:=\left\{t\in\R\mid (\xprime[], t)\in\altsourcedom\right\}$. By Remark \ref{rmk: extendability}, we can assume $u_\pm$ are both continuous up to $\Lambda^{\cl}$ which is also convex, thus we will tacitly assume $\Lambda$ is a closed set for the remainder of the proof.

We first claim that given $\xprime\in\Lambda^{\prime}$, there is at most one $x^n\in\vertaltsourcedom[{\xprime[]}]$ such that $(\xprime, x^n)\in\sing[]$, and it must be that $x^n=\heightfunc(\xprime)$. 
Indeed, fix an $\xprime\in\Lambda^{\prime}$ and suppose there exists such an $x^n$. 
First by \cite[Proposition 2.4]{Alberti94}, for any $(\xprime, t)\in\altsourcedom$ we have 
\begin{align}\label{eqn: subdifferential of projection}
\subdiff{\convpotproj[{\xprime}]}{t}=\pi^{n}\left(\subdiff{u}{\xprime, t}\right).
\end{align}
As $\subdiff{u}{\xprime, x^n}$ is convex and intersects both $\alttargetpiece[+]$ and $\alttargetpiece[-]$, we must have $[\midheight[0]-\height[0], \midheight[0]+\height[0]]\subset \subdiff{\convpotproj[\xprime]}{x^n}$, which implies $x^n\in\subdiff{\convpotprojdual[\xprime]}{[\midheight[0]-\height[0], \midheight[0]+\height[0]]}$ by \cite[Theorem 23.5]{Rockafellar70}. We also immediately see that the values $\convpotprojdual[\xprime](\midheight[0]\pm\height[0])$ are both finite. 
By the definition of subdifferential, we have the inequalities
\begin{align*}
 \convpotprojdual[\xprime](\midheight[0]+\height[0])&\geq \convpotprojdual[\xprime](\midheight[0]-\height[0])+x^n(\midheight[0]+\height[0]-(\midheight[0]-\height[0])),\\
 \convpotprojdual[\xprime](\midheight[0]-\height[0])&\geq \convpotprojdual[\xprime](\midheight[0]+\height[0])+x^n(\midheight[0]-\height[0]-(\midheight[0]+\height[0])),
\end{align*}
which combined implies
 $x^n=\heightfunc[](\xprime)$ defined by \eqref{eqn: explicit formula}, and in particular there can only be at most one such $x^n$ for each $\xprime$.
 
Now suppose $\xprime\in\Lambda^{\prime}$ is such that $\vertaltsourcedom[\xprime]\neq \emptyset$ but there is no $t\in \vertaltsourcedom[\xprime]$ with $(\xprime, t)\in\altsourcedom$ where $\subdiff{\convpot}{\xprime, t}$ intersects both of the sets $\alttargetpiece[\pm]$. Note since $\altsourcedom$ is convex the fiber $\vertaltsourcedom[\xprime]$ is connected. As the choice of cost function $c(x, \xbar):=-\euclidean{x}{\xbar}$ satisfies conditions \eqref{B1} and \eqref{MTW} (see Section \ref{section: stability of OT} below), we can apply Lemma \ref{lem: connectivity of c-subdifferential} to see that $\subdiff{\convpotproj[{\xprime}]}{\vertaltsourcedom[\xprime]}$ is connected. We comment here, Lemma \ref{lem: connectivity of c-subdifferential} does not directly apply if $\vertaltsourcedom[\xprime]$ is unbounded, but we can exhaust $\vertaltsourcedom[\xprime]$ with an increasing collection of bounded subintervals then take the union of their images under the subdifferential of $u_{\xprime}$ to obtain the claim. In particular by Lemma \ref{lem: dm} (recalling \eqref{eqn: subdifferential of projection}), either $\subdiff{\convpotproj[{\xprime}]}{\vertaltsourcedom[\xprime]}\subset [\midheight[0]+\height[0], \infty)$ or $\subdiff{\convpotproj[{\xprime}]}{\vertaltsourcedom[\xprime]}\subset(-\infty, \midheight[0]-\height[0]]$, suppose it is the former; this is equivalent to having on the set $\vertaltsourcedom[\xprime]$,
\begin{align}
\convpotproj[{\xprime}](\cdot)\equiv u_+(\xprime,\cdot).\label{eqn: plus is bigger}
\end{align} 

Now we claim there exists a finite $t_0\in \R$ such that 
\begin{align*}
 u_{\xprime}(t_0)=u_+(\xprime, t_0)=u_-(\xprime, t_0).
\end{align*}
By \eqref{eqn: plus is bigger}, it is sufficient to show there is some $t$ for which $u_-(\xprime, t)>u_+(\xprime, t)$, then then intermediate value theorem will finish the claim. Fix some $\tilde{t}\in \vertaltsourcedom[\xprime]$ and suppose the claim fails, then \eqref{eqn: plus is bigger} would hold on all of $(-\infty, \tilde{t}]$. In turn, this means $u_+(\xprime, t)$ is finite for all $t\leq \tilde{t}$, as if it was infinite anywhere the subdifferential of $u_+(\xprime, \cdot)$ would contain an interval of the form $(-\infty, \tbar)$ for some $\tbar$, contradicting \eqref{eqn: subdifferential of projection} and the assumption $\subdiff{u_+}{\altsourcedom}\subset \alttargetpiece[+]$. Now take a sequence $t_k\searrow -\infty$ where $u_-(\xprime, t_k)\leq u_+(\xprime, t_k)$, with $t_k<\tilde{t}$ for all $k$. By the above remark we can find $\tbar_k\in \pi^n(\subdiff{u_+}{\xprime, t_k})\subset (\midheight[0]+\height[0], \infty)$. Using \cite[Proposition 2.4]{Alberti94} we then have
\begin{align*}
 u_+(\xprime, t_k)&\leq  u_+(\xprime, \tilde{t})-\tbar_k(\tilde{t}-t_k)\leq u_+(\xprime, \tilde{t})-(\midheight[0]+\height[0])(\tilde{t}-t_k).
\end{align*}
At the same time $u_-$ is finite on $\altsourcedom$, hence there exists $\tbar_-\in \pi^n(\subdiff{u_-}{\xprime, \tilde{t}})\subset (-\infty, \midheight[0]-\height[0])$, again by \cite[Proposition 2.4]{Alberti94} we have
\begin{align*}
 u_-(\xprime, t_k)&\geq u_-(\xprime, \tilde{t})+\tbar_-(t_k-\tilde{t})\geq u_-(\xprime, \tilde{t})+(\midheight[0]-\height[0])(t_k-\tilde{t}),
\end{align*}
thus
\begin{align*}
 u_-(\xprime, t_k)- u_+(\xprime, t_k)&\geq u_-(\xprime, \tilde{t})-u_+(\xprime, \tilde{t})+2\height[0](\tilde{t}-t_k)>0
\end{align*}
for large enough $k$, a contradiction, hence the claim is proven.

By Lemma \ref{lem: dm} and \eqref{eqn: subdifferential of projection} we can see that $\midheight[0]+\height[0]\in \subdiff{u_{\xprime}}{t_0}$, hence by \cite[Theorem 23.5]{Rockafellar70} we have
\begin{align}
 \convpotprojdual[\xprime](\midheight[0]+\height[0])&=t_0(\midheight[0]+\height[0])-\convpot(\xprime, t_0).
 \label{eqn: upper bound on transform}
\end{align}
Since by definition
\begin{align*}
 -\convpotprojdual[\xprime](\midheight[0]-\height[0])&=\inf_{t\in\R}(\convpot(\xprime, t)-t(\midheight[0]-\height[0]))\leq \convpot(\xprime, t_0)-t_0(\midheight[0]-\height[0]),
\end{align*}
we find that
\begin{align*}
 \heightfunc(\xprime)&\leq \frac{\convpot(\xprime, t_0)-t_0(\midheight[0]-\height[0])+t_0(\midheight[0]+\height[0])-\convpot(\xprime, t_0)}{2\height[0]}=t_0\leq \inf\vertaltsourcedom[\xprime],
\end{align*}
the last inequality from the fact that \eqref{eqn: plus is bigger} holds on $\vertaltsourcedom[\xprime]$.
The argument leading to \eqref{eqn: upper bound on transform} can also be applied to $\convpotprojdual[\xprime](\midheight[0]-\height[0])$, since $u_{\xprime}^*$ is a proper convex function, an upper bound implies finiteness, hence $\heightfuncpm[]$ are both finite valued for such $\xprime$. The case $\subdiff{\convpotproj[{\xprime}]}{\vertaltsourcedom[\xprime]}\subset(-\infty, \midheight[0]-\height[0]]$ can be handled by a symmetric argument yielding that $\heightfunc(\xprime)\geq \sup \vertaltsourcedom[\xprime]$, and we find $\heightfuncpm[]$ are both finite valued on all of $\Lambda^{\prime}$. To show closedness of $\heightfuncpm[]$, fix any $(\xprime[0], t_0)\in \R^n$. By \cite[Theorem 7.1]{Rockafellar70}, $u$ is lower semicontinuous on $\R^n$, thus for any $\epsilon>0$, there exists $\delta>0$ such that $\displaystyle u(\xprime[0], t_0)\leq \epsilon+\inf_{\xprime\in B_\delta(\xprime[0])\setminus \{\xprime[0]\}} u(\xprime, t_0)$, hence we have
\begin{align*}
 -\convpotprojdual[{\xprime[0]}](\midheight[0]\pm\height[0])&\leq u(\xprime[0], t_0)-t_0(\midheight[0]\pm\height[0])\\
 &\leq \inf_{\xprime\in B_\delta(\xprime[0])\setminus \{\xprime[0]\}} u(\xprime, t_0)-t_0(\midheight[0]\pm\height[0])+\epsilon,
\end{align*}
taking an infimum over $t_0\in \R$ shows that $\heightfuncpm$ is lower semicontinuous, hence closed by \cite[Theorem 7.1]{Rockafellar70} again.

 Next suppose $x\in\altsourcedom$ is such that $\subdiff{u}{x}\cap \alttargetpiece[-]=\emptyset$, and there exists an $(\xprime, t)\in \altsourcedom$ where $\subdiff{\convpot}{\xprime, t}$ intersects both of the sets $\alttargetpiece[\pm]$. By the argument above, we must have $t=\heightfunc(\xprime)$. Take $\xbar\in \subdiff{u}{x}$ and $(\ybarprime, \midheight[0])\in \subdiff{u}{\xprime, \heightfunc(\xprime)}$. By monotonicity of the subdifferential we find that
\begin{align*}
0&\leq \euclidean{x-(\xprime, \heightfunc(\xprime))}{\xbar-(\ybarprime, \midheight[0])}\\
&=(x^n-h(\xprime))(\xbar^n-\midheight[0]).
\end{align*}
However, by Lemma \ref{lem: dm} and since $\subdiff{u}{x}$ does not intersect $\alttargetpiece[-]$, we have must have $\xbar^n-\midheight[0]\geq 0$, thus $x^n\geq h(\xprime)$. A symmetric argument yields that if $\subdiff{u}{x}\cap \alttargetpiece[+]=\emptyset$, then $x^n\leq h(\xprime)$. Since $\subdiff{u}{\xprime, \heightfunc(\xprime)}$ intersects both sets $\alttargetpiece[\pm]$, the above inequalities must be strict. Combined with the arguments above, this proves the characterizations of $\sing$, $C_+$, and $C_-$ as the graph, epigraph, and subgraph of $h$ intersected with $\altsourcedom$.

We will next show $\heightfunc[\pm]$ are both convex (essentially, this is just the fact that a supremum of a family of jointly convex functions gives a concave function). To this end, fix $\xprime[0]$, $\xprime[1]\in\Lambda^{\prime}$ and $t_0$, $t_1\in \R$, and define $(\xprime[\lambda], t_\lambda):=((1-\lambda)\xprime[0]+\lambda\xprime[1], (1-\lambda)t_0+\lambda t_1)$. Then $\xprime[\lambda]\in\Lambda^{\prime}$, hence $\convpotprojdual[{\xprime[\lambda]}](\midheight[0]+\height[0])$ is finite, in particular $\heightfuncpm$ cannot take the value $-\infty$ anywhere and they must be proper. 
By the convexity of $u$, we can calculate
\begin{align*}
\convpotprojdual[{\xprime[\lambda]}](\midheight[0]+\height[0])&\geq t_\lambda(\midheight[0]+\height[0])-u(\xprime[\lambda], t_\lambda)\\
&\geq (1-\lambda)t_0(\midheight[0]+\height[0])-(1-\lambda)u(\xprime[0], t_0)+\lambda t_1(\midheight[0]+\height[0])-\lambda u(\xprime[1], t_1),
\end{align*}
where the right hand sides of the second and third lines above may take the value $-\infty$. 
By taking a supremum on the right hand side, first over $t_0$, then over $t_1$, we obtain
\begin{align*}
 \convpotprojdual[{\xprime[\lambda]}](\midheight[0]+\height[0])\geq (1-\lambda)\convpotprojdual[{\xprime[0]}](\midheight[0]+\height[0])+\lambda\convpotprojdual[{\xprime[1]}](\midheight[0]+\height[0]),
\end{align*}
then since $\Lambda'$ is convex, the epigraph of $\heightfuncplus$ will be a convex set. 
A similar argument for $\convpotprojdual[{\xprime[\lambda]}](\midheight[0]-\height[0])$ proves the epigraph of $\heightfuncminus$ is convex as well.

Lastly we prove the Lipschitz bound \eqref{eqn: lipschitz bound}. To do so, we will show that any circular cone of slope $\tan{\anglemin}$ 
opening in the positive  or negative $e_{n}$ direction, with vertex on the set $\sing[]\cap \altsourcedom$ remains on one side of $\sing[]$. Specifically, fix a point in $\sing[]\cap\altsourcedom$ and after a temporary translation, assume it is the origin. We claim that if $x^n\geq \norm{\xprime}\tan{\anglemin}$ with $\xprime\in \Lambda^{\prime}$, then 
\begin{align}\label{eqn: cone lies on top}
\heightfunc[](\xprime)\leq x^n.
\end{align}
Let us assume $\heightfunc(\xprime)\geq 0$, otherwise the above claim is immediate. First note that 
\begin{align}\label{eqn: monotonicity gives lipschitz}
\exists\;\xbar_\pm\in\alttargetpiece[\pm]\text{ s.t. }\euclidean{(\xprime, \heightfunc(\xprime))}{\xbar_+-\xbar_-}\leq 0&\implies\text{ \eqref{eqn: cone lies on top} holds}.
\end{align}
 Indeed by the definition of $\anglemin$, this would imply that
\begin{align*}
 0&\geq\euclidean{\xprime}{\frac{{\xbarprime[+]}-{\xbarprime[-]}}{\norm{\xbar_+-\xbar_-}}}+\heightfunc(\xprime)\left(\frac{\xbar_+^n-\xbar_-^n}{\norm{\xbar_+-\xbar_-}}\right)\\
 &\geq \euclidean{\xprime}{\frac{{\xbarprime[+]}-{\xbarprime[-]}}{\norm{\xbar_+-\xbar_-}}}+\heightfunc(\xprime)\cos{\anglemin}
\end{align*}
and rearranging terms,
\begin{align*}
\heightfunc(\xprime)&\leq \frac{1}{\cos{\anglemin}}\euclidean{-\xprime}{\frac{{\xbarprime[+]}-{\xbarprime[-]}}{\norm{\xbar_+-\xbar_-}}}\\
&\leq  \frac{\norm{\xprime}}{\cos{\anglemin}}\frac{\norm{{\xbarprime[+]}-{\xbarprime[-]}}}{\norm{\xbar_+-\xbar_-}}\\
&\leq  \norm{\xprime}\tan{\anglemin}\leq x^n,
\end{align*}
giving \eqref{eqn: cone lies on top}. Now let $\xbar_{0, \pm}\in\subdiff{\convpot[\pm]}{0}$ and $\tilde{\xbar}_{\pm}\in\subdiff{\convpot[\pm]}{\xprime, \heightfunc(\xprime)}$; by Lemma~\ref{lem: dm} we have that $\xbar_{0, \pm}\in\subdiff{u}{0}$ and $\tilde{\xbar}_{\pm}\in\subdiff{u}{\xprime, \heightfunc(\xprime)}$. In particular, 
\begin{align*}
 u(y)&\geq u(0)+\max\left\{\euclidean{y}{\xbar_{0, +}}, \euclidean{y}{\xbar_{0, -}}\right\},\\
 u(y)&\geq u(\xprime, \heightfunc(\xprime))+\max\left\{\euclidean{y-(\xprime, \heightfunc(\xprime))}{\tilde\xbar_{+}}, \euclidean{y-(\xprime, \heightfunc(\xprime))}{\tilde\xbar_{-}}\right\}
\end{align*}
for any $y$. Taking $y=(\xprime, \heightfunc(\xprime))$ in the first and $y=0$ in the second inequality, plugging the second into the first and rearranging terms we obtain
\begin{align}
\euclidean{(\xprime, \heightfunc(\xprime))}{\tilde\xbar_{-}} &\geq\min\left\{\euclidean{(\xprime, \heightfunc(\xprime))}{\tilde\xbar_{+}}, \euclidean{(\xprime, \heightfunc(\xprime))}{\tilde\xbar_{-}}\right\}\notag\\
 &\geq \max\left\{\euclidean{(\xprime, \heightfunc(\xprime))}{\xbar_{0, +}}, \euclidean{(\xprime, \heightfunc(\xprime))}{\xbar_{0, -}}\right\}\notag\\
 &\geq \euclidean{(\xprime, \heightfunc(\xprime))}{\xbar_{0, +}}.\notag
\end{align}
Thus we have \eqref{eqn: monotonicity gives lipschitz}, hence \eqref{eqn: cone lies on top}.

A symmetric argument can be used to show $x^n\leq \heightfunc(\xprime)$ whenever $x^n\leq -\norm{\xprime}\tan{\anglemin}$, as a result we obtain the Lipschitz bound \eqref{eqn: lipschitz bound}.
\end{proof}

\section{Stability of singularities}\label{section: stability}
In this section, we will use the explicit function theorem from the previous section to show a stability result for singularities, we will extend our discussion from convex functions to semi-convex functions. First a few definitions.
\begin{defin}[Semi-convexity]\label{defin: semiconvex function}
 Recall that a real valued function $u$ defined on some $\altsourcedom\subset\R^n$ is said to be \emph{semi-convex} if for any $x_0\in \altsourcedom$, there exists a neighborhood of $x_0$ and some $C>0$ for which the function $x\mapsto u(x)+C\norm{x-x_0}^2$ is convex on that neighborhood. We will say that a family $\{u_j\}$ of semi-convex functions has \emph{uniformly bounded constant of semi-convexity near $x_0$} if there is some neighborhood of $x_0$ on which the same constant $C>0$ can be chosen to make all of the functions $u_j+C\norm{\cdot-x_0}^2$ convex on that neighborhood.
 
A function $u$ defined on an open set in a smooth manifold is said to be \emph{semi-convex} if the above definition holds near any point in a local coordinate chart.
\end{defin}
\begin{defin}[Subdifferential of a semi-convex function]
 The \emph{subdifferential} of a semi-convex function $u$ defined on a subset of a Riemannian manifold $(M, g)$ is defined by
\begin{align*}
 \subdiff{u}{x_0}:=\left\{p\in \cotanspM{x_0}\mid u(\exp_{x_0}(v))\geq u(x_0)+p(v)+o(\norm{v}_g),\ \forall \tanspM{x_0}\ni v\to 0\right\}
\end{align*}
where $\exp_{x_0}$ is the Riemannian exponential map.
\end{defin}
If $u$ is a convex function on a subdomain of $\R^n$, this definition is equivalent to \eqref{eqn: convex subdifferential}.
\begin{defin}[Legendre transform]\label{defin: legendre transform}
 If $u$ is a real-valued 
function defined on some subdomain $\Dom(u)$ of $\R^n$, its \emph{Legendre transform} is the convex function defined by the equation \eqref{eqn: legendre transform} 
with the convention $u:=\infty$ outside $\Dom(u)$.
\end{defin}
It is well known that for a semi-convex function $u$, if $\subdiff{u}{x}$ is a singleton for some $x$, then $u$ is actually differentiable at $x$. We will be interested in the behavior of $u$ at points of \emph{nondifferentiability}, namely we will be concerned with the \emph{dimension} of $\subdiff{u}{x}$ (whenever we refer to the dimension of a convex set, we will always mean the dimension of its affine hull). In some sense, this dimension is a measure of how severe the singularity of $u$ is at $x$: for example the function $\norm{x}$ on $\R^n$ has an $n$ dimensional subdifferential at the origin which corresponds to a conical singularity, while $\norm{x^1}$ has a $1$ dimensional subdifferential at the origin, and the function remains differentiable in the $\{x^1=0\}$ subspace.

In particular, we are interested in the stability of the dimension of the subdifferential of a sequence of semi-convex functions, as detailed in the following theorem, whose proof is deferred to the end of this section.

\begin{thm}[Stability of singularities]\label{thm: stability theorem}
 Suppose that $\convpot$ is a real valued function, finite on an open neighborhood $\nbhd[x_0]$ of some point $x_0\in\R^n$, of the form
 \begin{align}\label{eqn: stability maximum representation of potential}
 \convpot &=\max_{1\leq i\leq K}\convpot[i],
 \end{align}
for some $K<\infty$  where all  $\convpot[i]$ are semi-convex. 
 Also fix some $1\leq k\leq \min\left\{K-1, n\right\}$ and assume that for any $1\leq i\leq k+1$: 
\begin{align*}
\convpot[i]&\in C^1(\nbhd[x_0]),\\
\convpot(x_0)&= \convpot[i](x_0)>\convpot[i'](x_0),\ \forall\;k+2\leq i'\leq K,
\end{align*}
and $\dim\subdiff{\convpot}{x_0}=k$. Finally, let $\left\{\perturbconvpot[i]{j}\right\}_{j=1}^\infty$ be a sequence for which  each $\perturbconvpot[i]{j}$ is semi-convex with uniformly bounded constant of semi-convexity near $x_0$, $\perturbconvpot[i]{j}\xrightarrow[j\to\infty]{}\convpot[i]$ uniformly in compact subsets of $\nbhd[x_0]$ for each $1\leq i\leq K$, and write $\displaystyle\perturbconvpot{j}:=\max_{1\leq i\leq K}\perturbconvpot[i]{j}$. Then for any $\epsilon>0$, there exists an index $J_\epsilon$ such that for any $j>J_\epsilon$, there exists a set $\perturbsing[n-k]{j}
\subset B_{\epsilon}{\left(x_0\right)}$ with $\haus[n-k]\left(\perturbsing[n-k]{j}\right)>0$ on which
\begin{align}
 \perturbconvpot{j}(x)=\perturbconvpot[i]{j}(x)>\perturbconvpot[i']{j}(x), 
 \qquad \forall\; x\in \perturbsing[n-k]{j},\ 1\leq i\leq k+1,\ k+2\leq i'\leq K.\label{eqn: perturbed touching}
\end{align}
 Moreover, $\perturbsing[n-k]{j}$ is the graph of a DC mapping over an open set in $\R^{n-k}$ and 
\begin{align}\label{eqn: correct order}
 \dim\subdiff{\perturbconvpot{j}}{x}\geq k \qquad \forall x\in\perturbsing[n-k]{j},
\end{align}
with equality on a set of full $\mathcal{H}^{n-k}$ measure in $\perturbsing[n-k]{j}$.
\end{thm}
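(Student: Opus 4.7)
The plan is to reduce to locally convex functions, apply Theorem \ref{thm: explicit function theorem} pairwise to extract DC hypersurfaces, and realize $\perturbsing[n-k]{j}$ as their intersection, equipped with a DC graph structure over $\R^{n-k}$.

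First, Lemma \ref{lem: dm} together with $\dim\subdiff{\convpot}{x_0}=k$ forces the gradients $\gradconvpot[1](x_0),\ldots,\gradconvpot[k+1](x_0)$ to be affinely independent. After a rotation I arrange their affine span to be parallel to $\R^{k}\times\{0\}^{n-k}$, with each direction $\gradconvpot[i](x_0)-\gradconvpot[1](x_0)$ pointing close to $e_{i-1}$ for $i=2,\ldots,k+1$. Adding a common multiple of $\lvert x-x_0\rvert^{2}$ to each $\convpot[i]$ and each $\perturbconvpot[i]{j}$---feasible thanks to the uniform semi-convexity constants---reduces to the setting where all functions are convex on a closed ball $B_{\rho}(x_0)\subset\nbhd[x_0]$; this perturbation only shifts subdifferentials by a linear term in $x$ and preserves their dimensions and coincidence sets. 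Using the uniform convergence $\perturbconvpot[i]{j}\to\convpot[i]$ together with upper semicontinuity of subdifferentials for uniformly semi-convex families, $\subdiff{\perturbconvpot[i]{j}}{B_{\rho}(x_{0})}$ concentrates near the singleton $\{\gradconvpot[i](x_0)\}$ as $j\to\infty$. Finally, the strict inequality $\convpot(x_{0})>\convpot[i'](x_0)$ for $i'\ge k+2$ lets me shrink $\rho$ so that on $B_{\rho}(x_0)$ one has $\perturbconvpot[i']{j}<\max_{i\le k+1}\perturbconvpot[i]{j}$ strictly for large $j$, allowing me to discard those indices.

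For each $i\in\{2,\ldots,k+1\}$ I then pick a unit vector $w^{i}$ close to $e_{i-1}$ and a level $a_{i}$ such that $\{y\cdot w^{i}=a_{i}\}$ strongly separates a fixed neighborhood of $\gradconvpot[1](x_0)$ from a fixed neighborhood of $\gradconvpot[i](x_0)$. For all sufficiently large $j$ these neighborhoods contain $\subdiff{\perturbconvpot[1]{j}}{B_{\rho}(x_0)}$ and $\subdiff{\perturbconvpot[i]{j}}{B_{\rho}(x_0)}$ respectively, so Theorem \ref{thm: explicit function theorem} applied to $(\perturbconvpot[1]{j},\perturbconvpot[i]{j})$ in direction $w^{i}$ yields a DC hypersurface $\Sigma_{j}^{1,i}$, realized as a Lipschitz DC graph in direction $w^{i}$, on which $\perturbconvpot[1]{j}=\perturbconvpot[i]{j}$ is the envelope of the pair. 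Setting $\perturbsing[n-k]{j}:=\bigcap_{i=2}^{k+1}\Sigma_{j}^{1,i}\cap B_{\epsilon}(x_0)$ then gives \eqref{eqn: perturbed touching} by construction, once $j$ is large enough that each hypersurface passes through $B_{\epsilon/2}(x_0)$, which follows from $\perturbconvpot[i]{j}(x_0)\to\convpot(x_0)$.

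The main obstacle lies in upgrading this intersection from a merely rectifiable set to the graph of a DC map over an open subset of $\R^{n-k}$. My plan is iterative: $\Sigma_{j}^{1,2}$ is a Lipschitz DC graph in direction $e_{1}$ over $\R^{n-1}$, so eliminating $x^{1}$ reduces $\perturbconvpot[1]{j}$ and $\perturbconvpot[3]{j}$ to DC functions of $(x^{2},\ldots,x^{n})$; since the new separating direction $w^{3}$ is essentially independent of $e_{1}$ by our rotation, the partial Legendre transform construction of \eqref{eqn: explicit formula} adapts to produce a DC graph in direction $e_{2}$ over $\R^{n-2}$. Iterating through $i=4,\ldots,k+1$ builds up a DC map $(x^{k+1},\ldots,x^{n})\mapsto(H^{1},\ldots,H^{k})$ whose graph, intersected with $B_{\epsilon}(x_0)$, equals $\perturbsing[n-k]{j}$. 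Openness of the parameter domain and $\haus[n-k](\perturbsing[n-k]{j})>0$ then follow directly from the Lipschitz graph structure. The technical crux is verifying that the partial Legendre transform of a max of two DC (rather than convex) functions inherits the DC structure across iterations---or, equivalently, proving a multivariate analogue of Theorem \ref{thm: explicit function theorem} by working simultaneously in all $k$ transform directions.

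Finally, on $\perturbsing[n-k]{j}$, Lemma \ref{lem: dm} together with the proximity of $\subdiff{\perturbconvpot[i]{j}}{x}$ to the affinely independent vectors $\gradconvpot[i](x_0)$ produces $k+1$ near-affinely-independent points inside $\subdiff{\perturbconvpot{j}}{x}$, whence $\dim\subdiff{\perturbconvpot{j}}{x}\ge k$, establishing \eqref{eqn: correct order}. For $\haus[n-k]$-a.e.\ equality, the higher stratum $\{x:\dim\subdiff{\perturbconvpot{j}}{x}\ge k+1\}$ is contained in a countable union of DC submanifolds of dimension at most $n-k-1$ by the Zaj\'i\v cek-type stratification of the semi-convex envelope $\perturbconvpot{j}$, hence is $\haus[n-k]$-negligible inside the $(n-k)$-dimensional set $\perturbsing[n-k]{j}$.
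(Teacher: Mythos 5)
Your reduction steps (adding a common quadratic to restore convexity, normalizing the gradients via affine independence, and discarding the indices $i'\ge k+2$ by the strict inequality at $x_0$) match the paper's, and your argument for \eqref{eqn: correct order} and the a.e.\ equality of dimension is sound. However, the core of your construction has two genuine gaps. First, you define $\perturbsing[n-k]{j}$ as the intersection $\bigcap_{i=2}^{k+1}\Sigma_j^{1,i}\cap B_\epsilon(x_0)$ and claim \eqref{eqn: perturbed touching} holds ``by construction, once $j$ is large enough that each hypersurface passes through $B_{\epsilon/2}(x_0)$.'' But each pairwise coincidence hypersurface meeting $B_{\epsilon/2}(x_0)$ does not imply their \emph{common} intersection is nonempty near $x_0$; this is precisely the nontrivial existence statement. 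The paper handles it with a dedicated lemma (Lemma \ref{lem: existence of singular point}), which combines the explicit function theorem with Brouwer's fixed point theorem applied to the map $x\mapsto(h_1(\hat x^1),\ldots,h_k(\hat x^k),x^{k+1},\ldots,x^n)$ on a small cube, after using the $C^1$ implicit function theorem on the limits $u_i-u_{k+1}$ to get the needed sign conditions on the cube's faces. You supply no substitute for this step.

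Second, your route to the DC graph structure over an open set in $\R^{n-k}$ --- iteratively eliminating variables by substituting each pairwise DC graph into the next pair and re-applying the partial Legendre construction --- fails as stated, because Theorem \ref{thm: explicit function theorem} requires the two functions to be \emph{convex}, and after restricting to a DC graph the functions $\perturbconvpot[1]{j},\perturbconvpot[3]{j}$ are only DC; you flag this yourself as an unresolved ``technical crux,'' so the proof is incomplete exactly where the codimension-$k$ structure must be produced. The paper avoids this entirely: it forms the single DC map $F^j(x)=(\perturbconvpot[1]{j}(x)-\perturbconvpot[k+1]{j}(x),\ldots,\perturbconvpot[k]{j}(x)-\perturbconvpot[k+1]{j}(x))$, shows via Lemma \ref{lem: uniform convergence of subdifferentials} (which uses the $C^1$ smoothness of the limiting $u_i$, $i\le k+1$) that every matrix in the partial Clarke Jacobian $J^C_{x''}F^j$ is invertible near $x_0$ for large $j$, and then invokes the DC implicit mapping theorem (Theorem \ref{thm: DC IFT}, i.e.\ Clarke plus Vesel\'y--Zaj\'i\v cek) at the zero $x_j$ furnished by the Brouwer argument to obtain the DC map $\Phi^j$ over a ball in $\R^{n-k}$ in one stroke. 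Reworking your argument around $F^j$ and Theorem \ref{thm: DC IFT}, rather than iterated partial Legendre transforms, is the way to close both gaps.
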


In preparation, we shall
need a result on stability of the subdifferentials of a sequence of convergent convex functions. By a straightforward modification of the proof of~\cite[Theorem 25.7]{Rockafellar70}, we obtain the following lemma.

\begin{lem}\label{lem: uniform convergence of subdifferentials}
 Suppose that $\convpot$ and $\left\{\convpot[j]\right\}_{j=1}^\infty$ are convex functions, finite and with $\convpot[j]\to\convpot$ pointwise on some open convex domain $\altsourcedom$, and also assume that $\convpot$ is differentiable on $\altsourcedom$. Then for any compact $\altsourcedom[0]\subset\altsourcedom$ and $\epsilon>0$ there exists $j_0$ such that 
\begin{align*}
 \subdiff{\convpot[j]}{x}\subset B_{\epsilon}{\left(\gradconvpot(x)\right)}
\end{align*}
for all $j\geq j_0$ and $x\in\altsourcedom[0]$.
\end{lem}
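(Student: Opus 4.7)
The plan is to argue by contradiction, extracting a limit point in $\altsourcedom[0]$ together with a limit subgradient, and then to use the differentiability of $u$ together with continuity of $\nabla u$ to produce a contradiction. The key ingredients are (a) pointwise convergence of convex functions on an open convex set in fact forces uniform convergence on compact subsets, together with a uniform local Lipschitz bound on the $u_j$, and (b) a convex function which is differentiable throughout an open set has continuous gradient there; both are classical results from \cite[\S 10, \S 25]{Rockafellar70}.

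First I would fix a slightly larger compact set $\altsourcedom[1]$ with $\altsourcedom[0]\subset \altsourcedom[1]^{\interior}\subset\altsourcedom[1]\subset\altsourcedom$. By \cite[Theorem 10.8]{Rockafellar70}, pointwise convergence of the convex functions $u_j$ on the open convex set $\altsourcedom$ upgrades to uniform convergence on $\altsourcedom[1]$; in particular $\sup_{j}\sup_{\altsourcedom[1]}|u_j|<\infty$. Then \cite[Theorem 24.7]{Rockafellar70} gives a uniform Lipschitz constant $L$ for all $u_j$ on $\altsourcedom[0]$, so that all subgradients of the $u_j$ at points of $\altsourcedom[0]$ lie in the closed ball $\overline{B_L(0)}$.

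Next I would suppose toward a contradiction that the conclusion fails. Then there exist $\epsilon_0>0$, indices $j_k\to\infty$, points $x_k\in\altsourcedom[0]$, and subgradients $p_k\in\subdiff{u_{j_k}}{x_k}$ with $|p_k-\nabla u(x_k)|\geq \epsilon_0$. By compactness of $\altsourcedom[0]$ and boundedness of $\{p_k\}$, I pass to a further subsequence with $x_k\to x_\ast\in\altsourcedom[0]$ and $p_k\to p_\ast$. The subgradient inequality
\begin{equation*}
u_{j_k}(y)\geq u_{j_k}(x_k)+\langle p_k,y-x_k\rangle
\end{equation*}
holds for every $y\in\R^n$; for $y$ in any compact neighborhood of $x_\ast$ contained in $\altsourcedom$, the uniform convergence of $u_{j_k}$ to $u$ and the convergences $x_k\to x_\ast$, $p_k\to p_\ast$ yield in the limit
\begin{equation*}
u(y)\geq u(x_\ast)+\langle p_\ast, y-x_\ast\rangle.
\end{equation*}
Since the local subdifferential of a convex function coincides with the global one, this gives $p_\ast\in\subdiff{u}{x_\ast}$. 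But $u$ is differentiable at $x_\ast\in\altsourcedom[0]\subset\altsourcedom$, so $\subdiff{u}{x_\ast}=\{\nabla u(x_\ast)\}$ and hence $p_\ast=\nabla u(x_\ast)$.

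Finally, differentiability of a convex function throughout the open convex set $\altsourcedom$ implies continuity of $\nabla u$ on $\altsourcedom$ by \cite[Theorem 25.5]{Rockafellar70}, so $\nabla u(x_k)\to \nabla u(x_\ast)=p_\ast$. This contradicts $|p_k-\nabla u(x_k)|\geq\epsilon_0$ for all $k$, proving the lemma. I expect the main subtlety to be the passage to the limit in the subgradient inequality, which requires the uniform convergence of $u_{j_k}$ near $x_\ast$ (not merely pointwise convergence); this is handled by the upgrade from pointwise to locally uniform convergence noted above.
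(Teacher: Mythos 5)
Your proof is correct, but it follows a slightly different route than the paper's. The paper also argues by contradiction and relies on the same two classical facts (locally uniform convergence of pointwise-convergent convex functions, Rockafellar Thm.\ 10.8, and continuity of $\nabla u$, Thm.\ 25.5), but it never extracts a limiting subgradient: after passing to a subsequence it fixes a single coordinate direction $e_i$ with $\langle p_j-\nabla u(x_j),e_i\rangle>\sqrt{\epsilon/n}$, bounds the one-dimensional difference quotient $\lambda^{-1}(u_j(x_j+\lambda e_i)-u_j(x_j))$ from below by this quantity, and then lets $j\to\infty$ followed by $\lambda\searrow 0$ to reach the contradiction. This sidesteps the step you need, namely the boundedness of the subgradients $p_k$, which in your argument comes from a uniform-in-$j$ local Lipschitz estimate; that estimate is genuinely valid (uniform convergence gives $\sup_j\sup_{\Lambda_1}|u_j|<\infty$, and a convex function bounded by $M$ on a compact neighborhood is Lipschitz on a slightly smaller set with constant controlled by $M$ and the gap), though your citation of \cite[Theorem 24.7]{Rockafellar70} is a bit loose since that theorem concerns a single function and does not by itself give uniformity in $j$. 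In exchange, your argument yields the cleaner structural fact that limits of subgradients of $u_{j_k}$ at $x_k\to x_\ast$ are subgradients of $u$ at $x_\ast$, and then concludes via $\partial u(x_\ast)=\{\nabla u(x_\ast)\}$ and continuity of $\nabla u$; the paper's componentwise difference-quotient argument is more economical in that it needs no compactness in the dual variable at all. Both proofs are complete and correct.
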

\begin{proof}
 Suppose that the proposition fails, then for some compact $\altsourcedom[0]\subset\altsourcedom$ and $\epsilon>0$, there exists a sequence $\left\{x_j\right\}_{j=1}^\infty\subset \altsourcedom[0]$ and $p_j\in\subdiff{\convpot[j]}{x_j}$ for which $\norm{p_j-\gradconvpot(x_j)}>\epsilon$. By passing to subsequences, we may assume that $x_j\to x_0\in\altsourcedom[0]$, and for some fixed index $1\leq i\leq n$ that $\euclidean{p_j-\gradconvpot(x_j)}{e_{i}}>\sqrt{\frac{\epsilon}{n}}$ for all $j$ (the case of $\euclidean{p_j-\gradconvpot(x_j)}{e_{i}}<-\sqrt{\frac{\epsilon}{n}}$ is treated by a similar argument). Then, for any $\lambda>0$, since $p_j\in\subdiff{\convpot[j]}{x_j}$ we find that
\begin{align*}
 \frac{\convpot[j](x_j+\lambda e_{i})-\convpot[j](x_j)}{\lambda}\geq\euclidean{p_j}{e_{i}}> \sqrt{\frac{\epsilon}{n}}+\euclidean{\gradconvpot(x_j)}{e_{i}}.
\end{align*}
Recalling that $\convpot[j]$ converges uniformly on compact subsets of $\altsourcedom$ and $\gradconvpot$ is continuous on $\altsourcedom$ (\cite[Theorem 10.8 and Theorem 25.5]{Rockafellar70}), by first taking the limit $j\to \infty$ (for all small enough $\lambda>0$ so that $x_j+\lambda e_{i}\in \altsourcedom$) and then $\lambda \searrow 0$, we obtain the contradiction $\euclidean{\gradconvpot(x_0)}{e_{i}}\geq \sqrt{\epsilon/n}+\euclidean{\gradconvpot(x_0)}{e_{i}}$, finishing the proof.
\end{proof}

\begin{rmk}\label{rmk: uniform convergence of subdifferentials fails when limit is not differentiable}
 We remark that if the limiting function $\convpot$ is not differentiable, then Lemma~\ref{lem: uniform convergence of subdifferentials} above fails, even upon replacing $B_{\epsilon}{\left(\gradconvpot(x)\right)}$ by $\nbhdof[\epsilon]{\subdiff{\convpot}{x}}$, as seen by the following example. On $\altsourcedom=\R$ let $\convpot[j]:=\norm{x-1/j}$ converging to $\convpot:=\norm{x}$, and take the compact subdomain $\altsourcedom[0]:=[-1, 1]$. Then if $\epsilon = 1/2$, for any $j_0\in\N$ we see that
\begin{align*}
 \subdiff{\convpot[j_0]}{\frac{1}{j_0}}=[-1, 1]\not\subset [\frac{1}{2}, \frac{3}{2}]=\nbhdof[1/2]{\subdiff{\convpot}{\frac{1}{j_0}}},
\end{align*}
hence there is no choice of $j_0$ for which the proposition holds uniformly over $[-1, 1]$.
\end{rmk}
Next we recall the \emph{generalized (Clarke) Jacobian} of a mapping $G$ (at a point $x_0$, in the last $k$ variables).
\begin{defin}[Clarke Jacobian]\label{def: generalized jacobian}
 If $G: B_\epsilon(x_0)\subset \R^n\to \R^k$ is a Lipschitz function on a neighbourhood of $x_0$, we define $J^CG(x_0)$ to be the closed convex hull of all $k\times n$ matrices which can be written as limits of the form
\begin{align*}
 \lim_{n\to\infty} DG(x_n)
\end{align*}
where $x_n\to x_0$ and $G$ is differentiable at each $x_n$.

Moreover if $1\leq k\leq n$, using the notation $x=(x', x'')\in \R^{n-k}\times \R^k$ we write $J^C_{x''}G(x_0)$ for the set of $k\times k$ matrices consisting of the last $k$ columns of elements in $J^CG(x_0)$.
%
\end{defin}
A combination of Clarke's inverse function theorem \cite[Theorem 1]{Clarke76} and results of 
Vesely and Zaj\'i\v cek \cite{VeselyZajicek89} 
on DC mappings yields the following DC implicit function theorem.
\begin{thm}[DC implicit mapping theorem {\cite[Proposition 5.9]{VeselyZajicek89}}]\label{thm: DC IFT}
 Suppose $U\subset \R^{n-k}\times \R^k$ is open, $G: U\to \R^k$ is a DC mapping, and $G(x_0)=0$ for some $x_0=(x_0', x_0'')\in U$. Then if every element of $J^C_{x''}G(x_0)$ is invertible, 
there exists $\delta>0$ and a bi-Lipschitz, DC mapping $\phi$ from $B_\delta (x_0')\subset \R^{n-k}$ into 
$\R^k$ such that for all $(x',x'') \in B_\delta(x_0') \times B_\delta(x_0'') \subset \R^{n-k} \times \R^k$:
\begin{align*}
 G(x', x'') = 0 \quad {\rm if\ and\ only\ if} \quad x'' = \phi(x').
\end{align*}
\end{thm}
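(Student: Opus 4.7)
The plan is to reduce the statement to a DC inverse function theorem via a standard augmentation, then invoke Clarke's inverse function theorem together with the Vesel\'y--Zaj\'i\v cek preservation of the DC property under inversion.

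First I would introduce $F: U \to \R^{n-k}\times \R^k$ defined by $F(x',x'') := (x', G(x',x''))$. Each coordinate of $F$ is either linear (for the first $n-k$) or DC (for the last $k$), so $F$ is a DC mapping on $U$. Next I would analyze $J^C F(x_0)$: at any point $(y',y'')$ where $G$ is differentiable, the classical Jacobian has the block form
\begin{align*}
DF(y',y'') = \begin{pmatrix} I_{n-k} & 0 \\ D_{x'}G(y',y'') & D_{x''}G(y',y'') \end{pmatrix},
\end{align*}
and passing to limits and convex hulls shows that every $M \in J^C F(x_0)$ retains the block-lower-triangular form with top-left block $I_{n-k}$ and bottom-right block $B \in J^C_{x''}G(x_0)$. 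Hence $\det M = \det B \neq 0$ by hypothesis, so every element of $J^C F(x_0)$ is invertible.

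I would then apply Clarke's inverse function theorem \cite{Clarke76} to obtain open neighborhoods $W$ of $x_0$ and $V$ of $F(x_0) = (x_0',0)$ together with a Lipschitz local inverse $F^{-1}: V \to W$. The crucial additional input, supplied by the DC calculus of Vesel\'y and Zaj\'i\v cek \cite{VeselyZajicek89}, is that the Lipschitz inverse of a DC homeomorphism whose Clarke Jacobian is everywhere invertible is itself DC; applying this to $F$ yields that $F^{-1}$ is a DC mapping. Choosing $\delta>0$ small enough that $B_\delta(x_0') \times \{0\} \subset V$ and $B_\delta(x_0') \times B_\delta(x_0'') \subset W$, I define $\phi: B_\delta(x_0') \to \R^k$ by $\phi(x') := \pi''(F^{-1}(x',0))$, where $\pi''$ denotes projection onto the last $k$ coordinates. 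Then $\phi$ is DC as a composition of the DC maps $x' \mapsto (x',0)$, $F^{-1}$, and $\pi''$, and it inherits bi-Lipschitz regularity from $F^{-1}$ composed with the bi-Lipschitz inclusion $x' \mapsto (x',0)$. The equivalence $G(x',x'') = 0 \Leftrightarrow x'' = \phi(x')$ on $B_\delta(x_0') \times B_\delta(x_0'')$ is then immediate: the former says $F(x',x'') = (x',0)$, which by injectivity of $F$ on $W$ forces $(x',x'') = F^{-1}(x',0)$, i.e.\ $x'' = \phi(x')$.

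The main obstacle is locating and invoking the correct DC-preservation statement for inverses. Clarke's theorem by itself delivers only a Lipschitz inverse, so the entire DC conclusion rests on the Vesel\'y--Zaj\'i\v cek result that Lipschitz inverses of DC mappings with pointwise invertible Clarke Jacobian remain DC. Once that lemma is in hand, the rest of the argument is routine bookkeeping with the block-triangular structure of $J^C F(x_0)$ and the definition of $\phi$.
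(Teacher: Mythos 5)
Your proposal is correct and follows exactly the route the paper indicates: the paper gives no proof of Theorem \ref{thm: DC IFT}, attributing it instead to the combination of Clarke's inverse function theorem with the Vesel\'y--Zaj\'i\v cek result (their Proposition 5.9) that locally bi-Lipschitz DC homeomorphisms have DC inverses. Your augmentation $F(x',x'')=(x',G(x',x''))$, the block-triangular analysis of $J^C F(x_0)$, and the definition $\phi(x')=\pi''(F^{-1}(x',0))$ are precisely the standard bookkeeping realizing that combination.
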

Additionally, a careful inspection of the proof of \cite[Theorem 3.1]{ButlerTimourianViger88} combined with \cite[Theorem 5.1]{VeselyZajicek89} yields the following DC constant rank theorem.
\begin{thm}[DC constant rank theorem]\label{thm: DC constant rank}
 Suppose $U\subset \R^n$ is open, $G: U\to \R^k$ is a DC mapping, and $G(x_0)=0$ for some $x_0\in U$. Then if every element of $J^CG(x_0)$ has rank $k$, after a possible re-ordering and rotation of coordinates, the same conclusion as Theorem \ref{thm: DC IFT} above holds.
\end{thm}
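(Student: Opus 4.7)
The plan is to reduce the constant rank statement to the DC implicit mapping theorem (Theorem~\ref{thm: DC IFT}) via a single reordering and rotation of coordinates. Concretely, the goal is to produce a rotation $R \in SO(n)$ (possibly preceded by a coordinate permutation) such that, writing $\tilde G := G \circ R^{-1}$ and $y = Rx = (y', y'') \in \R^{n-k} \times \R^k$, the partial Clarke Jacobian $J^C_{y''} \tilde G(Rx_0)$ consists entirely of invertible matrices. Equivalently, one seeks a $k$-dimensional subspace $V = R^{-1}\linearspan\{e_{n-k+1}, \ldots, e_n\} \subset \R^n$ that is a linear complement of $\ker M$ for every $M \in J^C G(x_0)$, so that $M|_V : V \to \R^k$ is an isomorphism for each such $M$. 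Since precomposition by an orthogonal transformation preserves convexity of each component in a DC decomposition, $\tilde G$ is again a DC mapping, so Theorem~\ref{thm: DC IFT} applied to $\tilde G$ near $Rx_0$ yields a bi-Lipschitz DC map $\tilde\phi$ parameterizing $\{\tilde G = 0\}$ locally as $y'' = \tilde\phi(y')$, and pulling back by $R$ yields the statement for $G$.

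The main obstacle is producing this common complement $V$. For a single rank-$k$ matrix $M$, the set of $V \in Gr(k, n)$ with $V \cap \ker M = \{0\}$ is open and dense in $Gr(k, n)$, and for finitely many such matrices the intersection of these sets is residual by the Baire category theorem. The nontrivial point is to pass from this finite statement to the uncountable family $\{\ker M : M \in J^C G(x_0)\}$, whose image in $Gr(n-k, n)$ is only known a priori to be compact. For arbitrary compact families of $(n-k)$-planes a universal transverse $V$ need not exist, so the convexity of $J^C G(x_0)$ must play an essential role. Following the argument in the proof of~\cite[Theorem 3.1]{ButlerTimourianViger88}, one starts from any $M_0 \in J^C G(x_0)$ together with an initial complement $V_0$ of $\ker M_0$, and then exploits the convex-compact structure of the Clarke Jacobian together with the openness and stability of the transversality condition to perturb $V_0$ into a single $V$ transverse to every $\ker M$.

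Once such $V$ and $R$ are in hand, Theorem~\ref{thm: DC IFT} supplies the bi-Lipschitz graph parameterization of $\{\tilde G = 0\}$ near $Rx_0$, and the DC regularity of the resulting $\phi$ (rather than merely Lipschitz regularity) is precisely what~\cite[Theorem 5.1]{VeselyZajicek89} provides, by upgrading the Lipschitz output of Clarke's inverse function theorem to the DC category whenever the input is DC. This is the second ingredient alluded to in the statement of the theorem.
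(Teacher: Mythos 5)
Your proposal is correct and matches the paper's (essentially unwritten) proof: the paper simply asserts that the result follows from a careful inspection of \cite[Theorem 3.1]{ButlerTimourianViger88} combined with \cite[Theorem 5.1]{VeselyZajicek89}, and the reduction you describe --- rotating coordinates so that a common linear complement of $\{\ker M \mid M \in J^CG(x_0)\}$ becomes the last $k$ coordinate directions, then invoking Theorem \ref{thm: DC IFT} --- is exactly what that inspection amounts to. The existence of the common transverse $k$-plane, which you rightly flag as the crux and which genuinely requires the convexity (not just compactness) of the Clarke Jacobian, is precisely the ingredient both you and the paper borrow from \cite{ButlerTimourianViger88}, so no gap relative to the paper's own level of detail.
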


We shall also need:

\begin{lem}[Coincident roots]\label{lem: existence of singular point}
 Suppose $\phi^{\pm}_1, \ldots, \phi^{\pm}_k$ are real valued convex functions on $[-1, 1]^n$, such that $\phi^\pm_i >\phi^\mp_i$ on the set $\{x\in [-1, 1]^n\mid x^i=\pm 1\}$, and $\subdiff{\phi_i^+}{[-1, 1]^n}$ is strongly separated from $\subdiff{\phi_i^-}{[-1, 1]^n}$ by a hyperplane normal to $e_i$ for each $1\leq i\leq k$. Then, there exists a point in $]-1, 1[^n$ where all $2k$ functions $\phi^\pm_1=\ldots=\phi^\pm_k$ agree.
\end{lem}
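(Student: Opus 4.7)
The plan is to combine the explicit function theorem (Corollary~\ref{cor: explicit function theorem}) with Brouwer's fixed point theorem. For each $i \in \{1, \ldots, k\}$ I would permute coordinates so that the given separating hyperplane normal to $e_i$ plays the role of $e_n$ in Theorem~\ref{thm: explicit function theorem}, and apply that theorem to the pair $(\phi^+_i, \phi^-_i)$ with $\Lambda = [-1, 1]^n$; the compact sets $\overline{\Lambda}_\pm$ required by the theorem are supplied by $\subdiff{\phi^{\pm}_i}{[-1,1]^n}$, which are bounded (each $\phi^{\pm}_i$ being convex and finite on the compact cube, cf.\ Remark~\ref{rmk: extendability}) and strongly separated by hypothesis. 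This produces a DC, hence globally Lipschitz, function $h_i \colon [-1,1]^{n-1} \to \R$ whose graph in the $e_i$ direction is precisely the coincidence set: for $x \in [-1,1]^n$, $\phi^+_i(x) = \phi^-_i(x)$ iff $x^i = h_i(\hat x^i)$, where $\hat x^i := (x^1, \ldots, x^{i-1}, x^{i+1}, \ldots, x^n)$.

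Next I would exploit the boundary hypotheses $\phi^{\pm}_i > \phi^{\mp}_i$ on $\{x^i = \pm 1\}$: along the segment $t \mapsto (x^1, \ldots, x^{i-1}, t, x^{i+1}, \ldots, x^n)$ for any fixed $\hat x^i \in [-1,1]^{n-1}$, the continuous function $\phi^+_i - \phi^-_i$ is negative at $t = -1$ and positive at $t = +1$, so by the intermediate value theorem it vanishes at some $t^{\ast} \in (-1, 1)$. Thus $h_i$ is defined throughout $[-1,1]^{n-1}$ with values in the open interval $(-1, 1)$. I would then define a continuous self-map $G \colon [-1, 1]^k \to (-1, 1)^k \subset [-1, 1]^k$ of the compact cube by
\[
G(y)_i := h_i\bigl(y_1, \ldots, y_{i-1}, y_{i+1}, \ldots, y_k, 0, \ldots, 0\bigr),
\qquad 1 \le i \le k,
\]
padding the last $n - k$ slots with zeros when $n > k$. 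Continuity of $G$ follows from the Lipschitz bound in Theorem~\ref{thm: explicit function theorem}, so Brouwer's fixed point theorem yields $y^{\ast} \in (-1,1)^k$ with $G(y^{\ast}) = y^{\ast}$. The point $x^{\ast} := (y^{\ast}_1, \ldots, y^{\ast}_k, 0, \ldots, 0) \in\, ]-1,1[^n$ then satisfies the $i$-th coordinate identity $(x^{\ast})^i = h_i(\hat x^{\ast,i})$ for every $i$, i.e.\ $\phi^+_i(x^{\ast}) = \phi^-_i(x^{\ast})$ for each $i = 1, \ldots, k$, which is the desired coincidence.

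The real leverage here is Theorem~\ref{thm: explicit function theorem}: without the strong separation of subdifferentials by hyperplanes normal to $e_i$, one could not represent each coincidence set as a globally Lipschitz graph in the $e_i$ direction, let alone one whose domain is all of $[-1,1]^{n-1}$. Once each $h_i$ is in hand with values strictly inside $(-1,1)$, the topological Brouwer argument is routine; the only care-point is that $h_i$ has $n-1$ arguments rather than $k-1$, so when forming the self-map $G$ of the $k$-dimensional cube one must fix the remaining $n-k$ coordinates to arbitrary values in $[-1,1]^{n-k}$ (any such choice yields a valid fixed point, though possibly at different locations).
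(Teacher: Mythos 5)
Your proposal is correct and follows essentially the same route as the paper: apply the explicit function theorem (Corollary~\ref{cor: explicit function theorem}) in each coordinate direction to obtain globally defined continuous graphs $h_i$, use the intermediate value theorem together with the boundary sign conditions to confine their values to $]-1,1[$, and conclude with Brouwer's fixed point theorem. The only cosmetic difference is that you apply Brouwer to a $k$-dimensional slice with the last $n-k$ coordinates frozen at $0$, whereas the paper takes the map to be the identity in those coordinates and applies Brouwer on the full cube $[-1,1]^n$.
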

\begin{proof}
 For any $x\in \R^n$, let us write $\hat{x}^i:=(x^1,\ldots, x^{i-1}, x^{i+1}, \ldots, x^n)$. Fix $1\leq i\leq k$, by Corollary \ref{cor: explicit function theorem}, there is a DC function $h_i$ defined on all of $\hat{I}_i:=\{\hat{x}^i\mid x\in [-1, 1]^n\}$ such that the graph of $h_i$ over this set is exactly 
 \begin{align*}
 \{x\in [-1, 1]^n\mid\phi^+_i(x)=\phi^-_i(x)\};
 \end{align*}
  by the intermediate value theorem we see for any $\hat{x}\in\hat{I}_i$ there exists $x\in [-1, 1]^n$ where $\phi^+_i(x)=\phi^-_i(x)$ and $\hat{x}^i=\hat{x}$, and in particular the range of $h_i$ is contained in $[-1, 1]$. Now define the mapping $F: [-1, 1]^n\to [-1, 1]^n$ by 
 \begin{align*}
 F(x):=(h_1(\hat{x}^1), \ldots, h_k(\hat{x}^k), x^{k+1},\ldots, x^n),
 \end{align*}
  this mapping is continuous by Theorem \ref{thm: explicit function theorem}, thus by Brouwer's fixed point theorem it has a fixed point in $[-1, 1]^n$. However, we see that at this fixed point we must have $\phi^\pm_1=\ldots=\phi^\pm_k$, by the assumptions on the $\phi^\pm_i$ this point clearly must be in the interior $]-1, 1[^n$.
\end{proof}
 
With these preparations, we are ready to prove the main stability result.

\begin{proof}[Proof of Theorem \ref{thm: stability theorem}]
By \cite[Theorem 1]{Alberti94}, the set of points $x$ where $\dim\subdiff{u}{x}\geq k+1$ has zero $\haus[n-k]$ measure, hence the final claim will follow immediately from \eqref{eqn: correct order}.

Suppose we are given $\convpot$, $x_0$, and a sequence $\left\{\perturbconvpot{j}\right\}_{j=1}^\infty$ as in the hypotheses of Theorem \ref{thm: stability theorem}.
Now by Lemma~\ref{lem: dm} we have 
\begin{align}
\subdiff{\convpot}{x_0}=\ch\left(\bigcup_{1\leq i\leq k+1}\{\gradconvpot[i](x_0)\}\right),\label{eqn: restricted subdifferential}
\end{align}
and since $\affdim{\left(\subdiff{\convpot}{x_0}\right)}=k$, the collection $\left\{\gradconvpot[i](x_0)-\gradconvpot[k+1](x_0)\right\}_{i=1}^{k}$ must be linearly independent, subtraction of a fixed linear function followed by a linear change of coordinates allows us to assume $\gradconvpot[i](x_0)=e_{n-k+i}$ for $1\leq i\leq k$ and $\gradconvpot[k+1](x_0)=0$. Next fix $\epsilon>0$, without loss of generality assume that $B_{\epsilon}{\left(x_0\right)}\subset \nbhd[x_0]$. By our assumptions, we may add a fixed quadratic function centered at $x_0$ to assume all $\perturbconvpot[i]{j}$ and $\convpot[i]$ are convex on $B_{\epsilon}{\left(x_0\right)}$, for $1\leq i\leq k+1$ (possibly shrinking $\epsilon$ as well). By taking $j$ large enough and possibly shrinking $\epsilon$ further, by the uniform convergence of each $u^j_i$ we may assume
\begin{align}\label{eqn: domination by first k+1}
\min_{1\leq i\leq k+1}u^j_i> \max_{k+2\leq i\leq K}u^j_i
\end{align}
on $B_\epsilon(x_0)$.

Define the mapping $F^j: B_\epsilon (x_0)\to \R^k$ by 
\begin{align*}
 F^j(x):=(\perturbconvpot[1]{j}(x)-\perturbconvpot[k+1]{j}(x),\ldots, \perturbconvpot[k]{j}(x)-\perturbconvpot[k+1]{j}(x))
\end{align*}
then we see that if $x\in B_{\epsilon}(x_0)$, the set $J^C_{x''}F^j(x)$ is contained in the collection of $k\times k$ matrices for which the $i$th row is contained in the convex hull of vectors of the form 
\begin{align*}
 \lim_{m\to\infty}D_{x''} (\perturbconvpot[i]{j}-\perturbconvpot[k+1]{j})(x_m)
\end{align*}
where $x_m\to x$ and $u^j_i$, $u^j_{k+1}$ are differentiable at each $x_m$. Here $D_{x''}$ indicates the projection of the gradient of a function onto the last $k$ variables. Since each function $u_i$ is $C^1$, after shrinking $\epsilon$ if necessary and taking $j$ large enough, by applying Lemma \ref{lem: uniform convergence of subdifferentials} we can assume that for any $x\in B_{\epsilon} (x_0)$ and $p^j_i\in \subdiff{\perturbconvpot[i]{j}}{x}$ we have 
\begin{align}\label{eqn: perturbed subdifferentials independent}
\begin{cases}
 p^j_i \in B_{\frac{1}{4}}(e_i),& 1\leq i\leq k,\\
 p^j_{k+1} \in B_{\frac{1}{4}}(0).&
\end{cases}
\end{align}
In particular, this implies that every matrix in $J^C_{x''}F^j(x)$ will be invertible, thus we can apply the DC implicit mapping theorem above to $F^j$, provided there exists at least one point $x_j \in B_{\epsilon}(x_0)$ where $F^j$ vanishes.

To this end, we translate so $x_0=0$, then we can apply the $C^1$ implicit function theorem to $u_i-u_{k+1}$ for each $1\leq i\leq k$.  For $\eta>0$ small enough we then get $u_i-u_{k+1}>0$ on $\{x\in [-\eta, \eta]^n\mid x^i= \eta\}$ while $u_i-u_{k+1}<0$ on $\{x\in [-\eta, \eta]^n\mid x^i=- \eta\}$ for all $i \le k$. 
For any $j$ large enough $u^j_i-u^j_{k+1}$ satisfies the same inequalities.
Thus recalling \eqref{eqn: perturbed subdifferentials independent},  a dilation by $1/\eta$ allows us
to apply Lemma \ref{lem: existence of singular point} above to conclude the existence of a sequence 
$x_j \in\ ]-\eta, \eta[^n \subset B_\epsilon(x_0)$ such that $F^j(x_j)=0$.
In particular, we may now apply the DC implicit mapping theorem to find a ball $B^j\subset \pi_{n-k}(B_\epsilon(x_0))$ and a DC mapping $\Phi^j: B^j\to B_\epsilon(x_0)$ whose graph passes through $x_j$
for which $u^j_1(\Phi^j(x'))=\ldots=u^j_{k+1}(\Phi^j(x'))$ for all $x'\in B^j$. Let 
\begin{align*}
 \perturbsing[n-k]{j}:=\{(x', \Phi^j(x'))\mid x'\in B^j\}\cap B_\epsilon(x_0).
\end{align*}
As a Lipschitz graph over $B^j\subset \R^{n-k}$ we see $\perturbsing[n-k]{j}$ has strictly positive $\haus[n-k]$ measure. Thus by Lemma \ref{lem: dm}, this implies 
\eqref{eqn: correct order}, while \eqref{eqn: domination by first k+1} yields \eqref{eqn: perturbed touching}
to finish
the proof.
\end{proof}

\section{Applications to optimal transport}\label{section: stability of OT}
In this sequel, we apply the explicit function theorem and stability theorems from the previous two sections to the optimal transport problem. Throughout, $\outerdom$ and $\outertarget$ are compact subsets of $n$-dimensional Riemannian manifolds $(M, g)$ and $(\Mbar, \gbar)$ respectively, $\outerdom^\bdry$ is assumed to have dimension less than or equal to $n-1$, and $c\in C^4(\outerdom\times \outertarget)$. Also the notation $\haus[i]_g$ will refer to the $i$-dimensional Hausdorff measure of a set defined using the distance derived from the Riemannian metric $g$.

We begin by recalling a number of notions and conditions from the theory of the optimal
transportation problem \eqref{OT} which adapt concepts from convex analysis such as the Legendre
transform \eqref{eqn: legendre transform} to choices of cost other than $c(x,\bar x) = -\euclidean{x}{\bar x}$.

\begin{defin}[$c$-convex functions]\label{defin: c-transform}
 For a proper lower semicontinuous function $u:\outerdom\to\R\cup \left\{\infty\right\}$, its \emph{$c$-transform} $u^c: \outertarget\to\R\cup \left\{\infty\right\}$ is defined by 
\begin{align*}
 u^c(\xbar):=\sup_{x\in\outerdom}(-c(x, \xbar)-u(x)).
\end{align*}
Also its \emph{double $c$-transform} $u^{cc^*}:\outerdom\to\R\cup \left\{\infty\right\}$ is defined by 
\begin{align*}
 u^{cc^*}(x):=\sup_{\xbar\in\outertarget}(-c(x, \xbar)-u^c(\xbar));
\end{align*}
$u$ is said to be \emph{$c$-convex} if $u=u^{cc^*}$ on $\outerdom$. Its \emph{$c$-subdifferential} at a point $x_0\in\outerdom$ is the set 
\begin{align*}
 \csubdiff{u}{x_0}:=\left\{\xbar\in\outertarget\mid -c(x, \xbar)+c(x_0, \xbar)+u(x_0)\leq u(x),\quad \forall\;x\in\outerdom\right\}.
\end{align*}
Likewise, the \emph{$c^*$-subdifferential} of $u^c$ at $\xbar_0\in \outertarget$ is defined as
\begin{align*}
 \cstarsubdiff{u^c}{\xbar_0}:=\left\{x\in\outerdom\mid -c(x, \xbar)+c(x, \xbar_0)+u^c(\xbar_0)\leq u^c(\xbar),\quad \forall\;\xbar\in\outertarget\right\}.
\end{align*}
Finally, for any subset $A\subset \outerdom$, we write
\begin{align*}
 \csubdiff{u}{A}:=\bigcup_{x\in A}{\csubdiff{u}{x}},
\end{align*}
and analogously for $\bar A\subset \outertarget$ and $\cstarsubdiff{u^c}{\bar A}$.
\end{defin}
\begin{rmk}[Strict $c$-convexity]
 It can be shown $u$ is a $c$-convex function if and only if for every $x_0\in \Omega$, the set $\csubdiff{u}{x_0}\neq \emptyset$. If in addition, for every $x_0\in\outerdom$ and $\xbar_0\in \csubdiff{u}{x_0}$, we have
\begin{align*}
\left\{x\in \outerdom\mid -c(x, \xbar_0)+c(x_0, \xbar_0)+u(x_0)=u(x)\right\}= \left\{x_0\right\}
\end{align*}
we say that $u$ is \emph{strictly $c$-convex}.
\end{rmk}
We also say $c$ satisfies \eqref{B1} if for any $x_0\in\outerdom$ and $\xbar_0\in \outertarget$, the mappings 
\begin{align}
 \xbar &\mapsto -D_x c(x_0, \xbar)\in\cotanspM{x_0},\notag\\
 x&\mapsto -D_{\xbar} c(x, \xbar_0)\in\cotanspMbar{\xbar_0},\label{B1}\tag{B1}
\end{align}
are diffeomorphisms on $\outerdom$ and $\outertarget$ respectively (these are classical conditions in optimal transport, corresponding for example to the twist and non-degeneracy conditions (A1) and (A2) in \cite{KimMcCann10}). We will also write $\cExp{x_0}{\cdot}$ for the inverse of the map in the first line above. Also for any sets $A\subset \outerdom$ and $\bar A\subset \outertarget$, we will use the shorthand notation
\begin{align}
 \coord{A}{\xbar}:&=-D_{\xbar} c(A, \xbar),\notag\\
 \coord{\bar A}{x}:&=-D_x c(x, \bar A).\label{eqn: c-exp image notation}
\end{align}
At this point we recall a classical result about existence of solutions to \eqref{OT}, originally due to Brenier for the case of the cost function $c(x, \xbar)=-\euclidean{x}{\xbar}$.
\begin{thm}[Optimal transport maps \cite{Brenier91, Gangbo95, GangboMcCann96, Levin99, McCann01, MaTrudingerWang05}]\label{thm: Brenier}
 If $c$ satisfies \eqref{B1} and $\sourcemeas$ is absolutely continuous with respect to the volume measure on $\M$, then there exists a $c$-convex function $u: \outerdom\to\R$ which is differentiable almost everywhere, and the map $T(x):=\cExp{x}{Du(x)}$ is a solution to \eqref{OT} with $T(\Dom Du) \subset \spt \mu$. We call such a $u$ an \emph{optimal potential} transporting $\sourcemeas$ to $\targetmeas$, with cost $c$.
\end{thm}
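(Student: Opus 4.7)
The strategy is the now-standard combination of Kantorovich's linear relaxation and duality with a differentiation argument that uses the twist hypothesis \eqref{B1} to collapse the (a priori multivalued) $c$-subdifferential to a single-valued map $\sourcemeas$-a.e.

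First, I would relax \eqref{OT} to the Kantorovich problem of minimizing $\int c(x,\bar x)\,d\pi(x,\bar x)$ over probability measures $\pi$ on $\outerdom\times\outertarget$ with marginals $\sourcemeas$ and $\targetmeas$. Compactness of $\outerdom\times\outertarget$ and continuity of $c$ yield a minimizer $\pi$, while Kantorovich duality produces a $c$-convex $u:\outerdom\to\R$ whose $c$-transform $u^c$ satisfies $u(x)+u^c(\bar x)+c(x,\bar x)\geq 0$ everywhere on $\outerdom\times\outertarget$ with equality $\pi$-a.e.; equivalently, $\pi$ is concentrated on the closed set $\{(x,\bar x):\bar x\in\csubdiff{u}{x}\}$.

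Next, I would establish almost-everywhere differentiability of $u$. Because $c\in C^4(\outerdom\times\outertarget)$ and $\outertarget$ is compact, the family $\{-c(\cdot,\bar x)-u^c(\bar x)\}_{\bar x\in\outertarget}$ has uniformly bounded $C^2$ norm on $\outerdom$, so $u=\sup_{\bar x\in\outertarget}(-c(\cdot,\bar x)-u^c(\bar x))$ is locally semi-convex in the sense of Definition \ref{defin: semiconvex function}. In particular $u$ is locally Lipschitz and differentiable outside a set of zero volume measure in $\outerdom$; absolute continuity of $\sourcemeas$ with respect to the volume measure on $\M$ then gives differentiability $\sourcemeas$-a.e.

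Finally, I would extract the transport map via \eqref{B1}. At any $x\in\Dom Du$ and any $\bar x\in\csubdiff{u}{x}$, the defining inequality $u(y)+c(y,\bar x)\geq u(x)+c(x,\bar x)$ for all $y\in\outerdom$ forces $Du(x)+D_x c(x,\bar x)=0$ upon differentiating at $x$; the first diffeomorphism in \eqref{B1} then inverts this to the unique point $\bar x=\cExp{x}{Du(x)}=:T(x)$, so $\csubdiff{u}{x}=\{T(x)\}$ at every point where $u$ is differentiable. Hence $\pi$ is concentrated on the graph of $T$ over $\Dom Du$, yielding $\pi=(\mathrm{id},T)_\#\sourcemeas$, the push-forward identity $T_\#\sourcemeas=\targetmeas$, and attainment of the Monge infimum in \eqref{OT}. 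The main conceptual step, and the only one specific to this setting rather than abstract Kantorovich theory, is this use of \eqref{B1}: it plays the role of invertibility of the classical Legendre transform and is precisely what reduces the generally multivalued $c$-subdifferential to the explicit graph $x\mapsto\cExp{x}{Du(x)}$; the containment $T(\Dom Du)\subset\spt\targetmeas$ then follows from $\spt\pi\subset\spt\sourcemeas\times\spt\targetmeas$.
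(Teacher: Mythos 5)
The paper gives no proof of this theorem at all --- it is quoted from the cited literature --- so the only comparison available is with the standard argument of those references, and that is exactly the route you take: Kantorovich relaxation, duality, semi-convexity of $u$ and hence a.e.\ differentiability, and the twist hypothesis \eqref{B1} to collapse $\csubdiff{u}{x}$ to the single point $\cExp{x}{Du(x)}$. For the existence of $T$, the identity $T_\#\sourcemeas=\targetmeas$, and optimality (the one-line comparison of the Monge and Kantorovich values), your argument is complete and is essentially the proof the citations contain.

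The genuine gap is in the final clause. The asserted inclusion (which should be read as $T(\Dom Du)\subset\target$; the ``$\spt\mu$'' in the statement is evidently a slip for $\spt\nu$, and it is the $\spt\nu$ version the paper actually uses later, in the proof of Lemma \ref{lem: u is a maximum of potentials}) is claimed at \emph{every} point of $\Dom Du$, whereas your justification via $\spt\pi\subset\source\times\target$ only controls $T(x)$ for $\sourcemeas$-a.e.\ $x$: points of $\Dom Du$ lying outside $\source$ are invisible to $\pi$, and for a generic dual optimizer $u$ (for instance one modified away from $\source$ while remaining $c$-convex with respect to the large set $\outertarget$, which does not disturb the $\pi$-a.e.\ equality) the point $\cExp{x}{Du(x)}$ at such $x$ need not lie in $\target$. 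The standard repair --- and the way the cited works, as well as \cite[Theorem 10.28]{Villani09} which the paper invokes in Lemma \ref{lem: u is a maximum of potentials}, proceed --- is to \emph{choose} $u$ as the $c^*$-transform of a potential defined on $\target$, i.e.\ $u(x)=\sup_{\bar x\in\target}\left(-c(x,\bar x)-\psi(\bar x)\right)$, where the supremum is attained by compactness of $\target$; then at every differentiability point the smooth competitor $y\mapsto -c(y,\bar x_0)-\psi(\bar x_0)$ touches $u$ from below at $x$ with a maximizer $\bar x_0\in\target$, forcing $Du(x)=-D_xc(x,\bar x_0)$ and hence $T(x)=\bar x_0\in\target$. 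This touching-from-below formulation also removes the need to differentiate the subdifferential inequality at $x$, which your first-order-condition step implicitly requires to be an interior point of $\outerdom$.
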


 In this first lemma, we show that if the support of the target measure consists of a (finite) union of disjoint, compact pieces, we can write the optimal potential as a maximum (of a finite number) of corresponding $c$-convex functions. For any function $u$, we will write $\Dom(Du)$ for the set of points where $u$ is differentiable, which in the case of a semi-convex function (thus in particular, for any $c$-convex function under our assumptions) is a set of full Lebesgue measure in $\Dom(u)$.

\begin{lem}[Optimal maps to separated targets]\label{lem: u is a maximum of potentials}
 Suppose a cost function $c$ satisfies \eqref{B1}, $\sourcemeas$ is absolutely continuous, and $\targetmeas$ is such that $\target$ is a disjoint union of an arbitrary (i.e. finite, coutable, or uncountable) 
collection $\{\targetpiece[i]\}_{i\in I}$ 
 of compact subsets of the compact set $\outertarget$. The $c$-convex functions $\convpot[i]: \outerdom\to \R$, $i\in I$
defined by
\begin{align}\label{eqn: piecewise c-transform}
\convpot[i](x):=\sup_{\xbar\in\targetpiece[i]}(-c(x, \xbar)-u^c_{}(\xbar))
\end{align}
satisfy
 \begin{align}
 \cExp{x}{D\convpot[i](x)}&\in \targetpiece[i],\quad \forall\;x\in\Dom(Du),\ \forall\; i \in I,\label{eqn: gradient maps into pieces}\\
 \convpot(x) &=\sup_{i\in I}\convpot[i](x),\quad\forall\;x\in \outerdom.\label{eqn: maximum representation of potential}
 \end{align}
\end{lem}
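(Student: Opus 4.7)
The plan is to establish the two assertions separately, leaning on the $c$-convexity of the optimal potential $u$ together with standard upper semi-continuity arguments for sups over compact sets.

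For the maximum representation \eqref{eqn: maximum representation of potential}, I would start from $u = u^{cc^*}$ on $\outerdom$, which expresses $u(x)$ as $\sup_{\bar x \in \outertarget}(-c(x, \bar x) - u^c(\bar x))$. The crucial step is to justify restricting this supremum from $\outertarget$ to $\spt\nu = \bigsqcup_i \targetpiece[i]$. For any $x \in \Dom(Du)$, the optimal map $T(x) := \cExp{x}{Du(x)}$ lies in $\spt\nu$ (after modifying $T$ on a null set, using $T_\#\mu = \nu$), and the Young-type equality $u(x) + u^c(T(x)) + c(x, T(x)) = 0$ (characterizing $T(x) \in \csubdiff{u}{x}$) shows that the restricted supremum already recovers $u(x)$ on the full-measure set $\Dom(Du)$. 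Both the full and the restricted suprema are semi-convex on $\outerdom^{\interior}$, being sups of uniformly $C^2$-bounded $c$-affine functions (using $c \in C^4$ and compactness of $\outertarget$), hence continuous there; agreement on the dense set $\Dom(Du)$ then forces agreement everywhere on $\outerdom^{\interior}$, and extends. Distributing the sup over the disjoint decomposition $\spt\nu = \bigsqcup_i \targetpiece[i]$ yields \eqref{eqn: maximum representation of potential}.

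For the gradient assertion \eqref{eqn: gradient maps into pieces}, I would argue directly. Fix $i \in I$ and $x \in \outerdom$. Lower semi-continuity of $u^c$ (as a sup of continuous functions) makes $\bar x \mapsto -c(x, \bar x) - u^c(\bar x)$ upper semi-continuous on the compact set $\targetpiece[i]$, so it attains a maximizer $\bar x^*(x) \in \targetpiece[i]$. The $c$-affine function $y \mapsto -c(y, \bar x^*(x)) - u^c(\bar x^*(x))$ then lies pointwise below $\convpot[i]$ with equality at $x$, providing a $c$-support for $\convpot[i]$ at $x$, so $\bar x^*(x) \in \csubdiff{\convpot[i]}{x}$. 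Wherever $\convpot[i]$ is differentiable, its $c$-subdifferential collapses to the singleton $\{\cExp{x}{D\convpot[i](x)}\}$ by standard $c$-convex analysis under \eqref{B1}, forcing $\cExp{x}{D\convpot[i](x)} = \bar x^*(x) \in \targetpiece[i]$. Since each $\convpot[i]$ is semi-convex (again by $c \in C^4$ and compactness of $\targetpiece[i]$), its set of differentiability has full Lebesgue measure in $\outerdom$ and thus intersects $\Dom(Du)$ in a set of full measure on which \eqref{eqn: gradient maps into pieces} holds.

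The principal obstacle I expect is the restriction of the sup in $u^{cc^*}$ from $\outertarget$ to $\spt\nu$: it cannot be carried out algebraically, because $u^c$ is defined on all of $\outertarget$ and its values off $\spt\nu$ could in principle inflate the supremum. The duality identity at $T(x)$ together with density of $\Dom(Du)$ and continuity of both suprema is exactly what is needed to rule this out. Once that reduction is in hand, the remainder is essentially bookkeeping: upper semi-continuity plus compactness produce the maximizer $\bar x^*(x)$, and the singleton characterization of the $c$-subdifferential at points of differentiability closes the argument.
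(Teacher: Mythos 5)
Your overall route coincides with the paper's: for \eqref{eqn: gradient maps into pieces} you use compactness of $\targetpiece[i]$ and lower semicontinuity of $u^c$ to produce a maximizer $\bar x^*(x)\in\csubdiff{\convpot[i]}{x}$, which at a differentiability point must equal $\cExp{x}{D\convpot[i](x)}$ by \eqref{B1}; for \eqref{eqn: maximum representation of potential} you write $u=u^{cc^*}$ and restrict the supremum from $\outertarget$ to $\target$ via the Young-type equality at a point of $\csubdiff{u}{x}\cap\target$. That is exactly the paper's argument, and your handling of the first claim (proving it where $\convpot[i]$ is differentiable) matches the paper's reading of the statement.

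There is, however, a genuine gap at the step you yourself flag as the crux. You justify ``$\cExp{x}{Du(x)}\in\target$ for all $x\in\Dom(Du)$'' by appealing to $T_\#\sourcemeas=\targetmeas$ ``after modifying $T$ on a null set.'' The push-forward identity only gives $T(x)\in\target$ for $\sourcemeas$-a.e.\ $x$, and no modification is permitted: $T$ is determined pointwise by $Du$, and the statement you need is a pointwise one about $Du$ on all of $\Dom(Du)$. With only the $\sourcemeas$-a.e.\ information, your subsequent continuity-plus-density argument breaks down off $\source$: a set of full $\sourcemeas$-measure is dense only in $\source$, which may be a proper compact subset of $\outerdom$ (possibly with $\outerdom\setminus\source$ having nonempty interior), so you would only obtain \eqref{eqn: maximum representation of potential} on $\source$, not on all of $\outerdom$ as the lemma asserts (and as later sections use). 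The repair is to invoke the pointwise inclusion that the paper relies on: under \textbf{(H$\infty$)} (automatic since $\outertarget$ is bounded), \cite[Theorem 10.28]{Villani09} --- equivalently the clause $T(\Dom Du)\subset\target$ in Theorem \ref{thm: Brenier} --- gives $\csubdiff{u}{x}=\{\cExp{x}{Du(x)}\}\subset\target$ at \emph{every} differentiability point of $u$. With that in hand, either your continuity argument or the paper's closed-graph argument (take differentiability points $x_j\to x$, pass to a limit $\xbar_0\in\csubdiff{u}{x}\cap\target$ by compactness) completes the restriction of the supremum at every $x\in\outerdom$; the rest of your proof then goes through.
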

\begin{proof}
First observe $\convpot[i]$ is finite valued on all of $\outerdom$. Clearly $\convpot[i]$ is $c$-convex, hence differentiable a.e.. Fix $i$ and let $x$ be such a point of differentiability, by compactness of $\targetpiece[i]$ there exists an $\xbar\in\targetpiece[i]$ achieving the supremum in the definition of $\convpot[i](x)$. The inclusion \eqref{eqn: gradient maps into pieces} then follows immediately by differentiation of $\convpot[i]$ at $x$ and \eqref{B1}.
 
 Now as $\convpot$ is $c$-convex by we see that for $x\in\outerdom$, 
\begin{align}
 \convpot(x)
 &=\sup_{\xbar\in\outertarget}{\left[-c(x,\xbar)-u^c_{}(\xbar)\right]}\notag\\
 &=\sup_{\xbar\in\target}{\left[-c(x,\xbar)-u^c_{}(\xbar)\right]}\notag\\
 &=\sup_{i\in I}\convpot[i](x),\label{eqn: maximum rep}
\end{align}
proving \eqref{eqn: maximum representation of potential}. The reason why we may change the supremum above from being over $\outertarget$ to just over $\target$ is as follows. As mentioned previously, $u$ is differentiable almost everywhere on $\outerdom$, so there exists a sequence $x_j\to x$ where $u$ is differentiable at $x_j$ and $\exists\;\xbar_j\in \csubdiff{u}{x_j}=\left\{\cExp{x_j}{Du(x_j)}\right\}$ for each $j$. By \cite[Theorem 10.28]{Villani09} (the assumption \textbf{(H$\infty$)} of the reference is automatically satisfied by our assumption that $\outertarget$ is bounded) we must have $\xbar_j\in \target$, then by compactness, we may pass to a subsequence and assume $\xbar_j\to \xbar_0$ for some $\xbar_0\in\target$, necessarily $\xbar_0\in \csubdiff{u}{x}$. However, this implies
\begin{align*}
 \sup_{\xbar\in\outertarget}{\left[-c(x,\xbar)-u^c_{}(\xbar)\right]}&=\sup_{\xbar\in\outertarget}\inf_{y\in\outerdom}{[-c(x,\xbar)+c(y, \xbar)+u(y)]}\\
 &\leq u(x)\leq -c(x, \xbar_0)+c(y, \xbar_0)+u(y)
\end{align*}
for any $y\in\outerdom$, thus we may take the supremum merely over $\target$.
\end{proof}

\begin{rmk}\label{rmk: no compactness needed}
 We pause to remark here that the above lemma will hold true even if $\outerdom$ is not necessarily bounded.
This is relevant for the cost $c(x, \xbar)=-\euclidean{x}{\xbar}$ with $\outerdom=\R^n$,
for which it is established as in \cite{GangboMcCann00};  see also \cite{GangboMcCann96} for more general costs.
\end{rmk}
In order to discuss regularity, we will require some more geometric structure.
\begin{defin}[$c$-convex sets]\label{defin: c-convex sets}
 $A\subset \outerdom$  ($\bar A\subset \outertarget$) is \emph{$c$-convex ($c^*$-convex) with respect to $\xbar_0$ ($x_0$)} if the set $ \coord{A}{\xbar_0}$ ($\coord{\bar A}{x_0}$) from \eqref{eqn: c-exp image notation} is convex. We say \emph{$A$ is $c$-convex with respect to $\bar A$} if $A$ is $c$-convex with respect to every $\xbar\in \bar A$, and \emph{$\bar A$ is $c^*$-convex with respect to $A$} analogously. Finally, the phrase \emph{$A$ and $\bar A$ are $c$-convex with respect to each other} if
both hold. We also refer to \emph{strictly $c$-convex} and \emph{strongly $c$-convex} by adding the corresponding modifiers to the convexity of $\coord{A}{\xbar_0}$ or $\coord{\bar A}{x_0}$,  {\em strict} convexity meaning the midpoint of any nontrivial segment in $\coord{A}{\xbar_0}$  
lies in the interior of $\coord{A}{\xbar_0}$,  and {\em strong} convexity meaning $\coord{A}{\xbar_0}$ can be expressed as the intersection of a family of balls with fixed radii.
\end{defin}
Lastly, $c \in C^4(\Omega \times \bar \Omega)$ satisfies the \eqref{MTW}  or (Ma-Trudinger-Wang) condition if
\begin{defin}[MTW costs \cite{MaTrudingerWang05, TrudingerWang09}]
For some constant $a_3 \ge 0$,
all $(x, \xbar)\in \outerdom\times\outertarget$, $V\in \tanspM{x}$ and $\eta\in\cotanspM{x}$ with $\eta(V)=0$
satisfy
\begin{align}\label{MTW}\tag{MTW}
-(c_{ij, \bar{r}\bar{s}}-c_{ij, \bar{t}}c^{\bar{t}, s}c_{s, \bar{r}\bar{s}})c^{\bar{r}, k}c^{\bar{s}, l}(x, \xbar)V^iV^j\eta_k\eta_l\geq a_3 \lvert V\rvert^2_g\lvert \eta\rvert^2_g\ge 0.
\end{align}
\end{defin}
Here, local coordinate systems are fixed near $x$ and $\xbar$, and subscripts before a comma indicate differentiation with respect to the $x$ variable, those after a comma are differentiation with respect to the $\xbar$ variable, and two raised indices indicate the matrix inverse. This last condition was crucial for regularity in the pioneering works \cite{MaTrudingerWang05, TrudingerWang09}, it was later shown to have geometric implications by Loeper in \cite{Loeper09} and by Kim and McCann in \cite{KimMcCann10}.

A particular geometric consequence of \eqref{MTW} that we will need is the following lemma,
which follows from \cite{Loeper09} \cite{KimMcCann10}.

\begin{lem}[Connected $c$-subdifferential images]\label{lem: connectivity of c-subdifferential}
Let $c$ satisfy \eqref{B1} and \eqref{MTW},  and $\outertarget$ and $\outerdom$ be $c$-convex with respect to each other. Then if $\mathcal{C}\subset \outerdom$ is connected and $u$ is a $c$-convex function on $\outerdom$, then $\csubdiff{u}{\mathcal{C}}$ is connected.
\end{lem}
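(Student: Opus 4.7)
The idea is to realize $\csubdiff{u}{\mathcal{C}}$ as the continuous image of a connected set. Form the graph
\[
G := \{(x, \bar x) \in \mathcal{C} \times \outertarget : \bar x \in \csubdiff{u}{x}\}
\]
with projections $\pi_1: G \to \mathcal{C}$ and $\pi_2: G \to \outertarget$. Since $u$ is $c$-convex and $\outertarget$ is compact, $\csubdiff{u}{x}\neq \emptyset$ for every $x\in\mathcal{C}$, so $\pi_1$ is surjective, while by construction $\pi_2(G) = \csubdiff{u}{\mathcal{C}}$; it thus suffices to prove $G$ is connected.

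The first step is to check that each fibre $\pi_1^{-1}(x) = \{x\} \times \csubdiff{u}{x}$ is connected. Under \eqref{B1}, the map $\xbar \mapsto -D_x c(x,\xbar)$ is a diffeomorphism from $\outertarget$ onto its image $\coord{\outertarget}{x}\subset \cotanspM{x}$. Loeper's maximum principle \cite{Loeper09} (see also the Kim--McCann reformulation of \eqref{MTW} in \cite{KimMcCann10}), combined with the $c^*$-convexity of $\outertarget$ with respect to $x$, asserts precisely that $\coord{\csubdiff{u}{x}}{x}$ is a convex subset of $\cotanspM{x}$. Pulling back through the diffeomorphism above yields path-connectedness of $\csubdiff{u}{x}$. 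This is the main substantive ingredient; everything else is soft, while here the MTW hypothesis (together with the mutual $c$-convexity assumption) is doing its distinctive work of forcing convexity, and hence connectedness, of each subdifferential slice.

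The second step is to show $\pi_1$ is closed. Suppose $(x_n, \bar x_n) \in G$ with $x_n \to x \in \mathcal{C}$; by compactness of $\outertarget$ we may pass to a subsequence so that $\bar x_n \to \bar x$, and passing to the limit in the defining inequality of Definition~\ref{defin: c-transform} yields $\bar x \in \csubdiff{u}{x}$, i.e.\ $(x, \bar x) \in G$. Consequently the graph of the $c$-subdifferential is closed in $\mathcal{C} \times \outertarget$, and it follows that $\pi_1$ sends relatively closed subsets of $G$ to closed subsets of $\mathcal{C}$. A standard topological fact now applies: a continuous closed surjection with connected fibres, over a connected base, has connected total space (if $G = A\sqcup B$ with $A,B$ nonempty and relatively open, then by connectedness of the fibres each lies entirely in $A$ or in $B$, and closedness of $\pi_1$ shows $\pi_1(A)$ and $\pi_1(B)$ are both closed, disconnecting $\mathcal{C}$). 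Hence $G$ is connected, and so is its continuous image $\csubdiff{u}{\mathcal{C}} = \pi_2(G)$.
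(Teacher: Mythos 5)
Your proof is correct and rests on exactly the same two ingredients as the paper's: Loeper's Theorem 3.1 (via \eqref{MTW} and the mutual $c$-convexity of $\outerdom$ and $\outertarget$) to get convexity, hence connectedness, of each slice $\coord{\csubdiff{u}{x}}{x}$, and compactness of $\outertarget$ to get the requisite closedness. The paper merely packages the topological step differently --- it pulls a hypothetical disconnection $\bar C_1 \sqcup \bar C_2$ of the image back through $\cstarsubdiff{u^c}{\cdot}$ and disconnects $\mathcal{C}$, which is the same argument as your ``closed surjection with connected fibres'' lemma applied to the graph.
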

\begin{proof}
 Suppose not, then there exist disjoint, closed sets $\Cbar_1$ and $\Cbar_2\subset \outertarget$ such that $\csubdiff{u}{\mathcal{C}}\subset \Cbar_1\cup \Cbar_2$, and $\csubdiff{u}{\mathcal{C}}\cap\Cbar_i\neq \emptyset$ for $i=1$, $2$. Define $C_i:=\cstarsubdiff{u^c}{\Cbar_i}$ for $i=1$, $2$. Since $\xbar\in\csubdiff{u}{x}$ if and only if $x\in\cstarsubdiff{u^c}{\xbar}$ we immediately have $\mathcal{C}\cap C_i\neq \emptyset$ for each $i$ while $\mathcal{C}\subset C_1\cup C_2 $. On the other hand, suppose there exists $x\in C_1\cap C_2\cap \mathcal{C}$. Then there exist $\xbar_1\in\Cbar_1$ and $\xbar_2\in\Cbar_2$ such that both are contained in $\csubdiff{u}{x}$. However \cite[Theorem 3.1]{Loeper09} implies the set $L_x:=\{\cExp{x}{(1-\lambda)(-D_xc(x, \xbar_1))+\lambda (-D_xc(x, \xbar_2)}\mid \lambda\in [0, 1]\}$ is contained in $\csubdiff{u}{x}\subset \Cbar_1\cup \Cbar_2$. This is a contradiction as $\Cbar_1$ and $\Cbar_2$ would disconnect $L_x$, thus $C_1\cap C_2\cap \mathcal{C}=\emptyset$. Last, since each $\Cbar_i$ is compact, we immediately see that $C_i$ is also compact, hence closed. Thus we have a contradiction with the connectedness of $\mathcal{C}$.
\end{proof}

We are particularly interested in optimal transport problems between measures $\sourcemeas$ and $\targetmeas$ satisfying the following properties, which are related to regularity results proved for $c(x,\bar x)= -\euclidean{x}{\xbar}$
by Caffarelli \cite{Caffarelli92} and extended to other costs in \cite{FigalliKimMcCann13} \cite{GuillenKitagawa15};
see also Chen and Wang \cite{ChenWang16}, and V\'etois \cite{Vetois15}:

\begin{enumerate}[(I)]
 \item Both $\sourcemeas$ and $\targetmeas$ are absolutely continuous with respect to the respective volume measures on $\M$ and $\Mbar$, and with densities bounded a.e. away from $0$ and $\infty$ on their supports.
 \item $\outerdom$ and $\outertarget$ are $c$-convex with respect to each other and either 
 \begin{align}\label{eqn: FKM conditions}
  \source \text{ and }\outertarget\text{ are strongly }c\text{-convex with respect to each other}
  \end{align}
 or 
 \begin{align}
& \source\subset\outerdom^{\interior},\ \target\subset\outertarget^{\interior},\notag\\
&\source\text{ is }c\text{-convex with respect to }\outertarget.\label{eqn: GK conditions}
 \end{align}
\end{enumerate}
Under the above conditions and \eqref{MTW}, we can make the following improvement of Lemma \ref{lem: u is a maximum of potentials}. The idea is based on one used by Caffarelli and McCann \cite[Theorem 6.3]{CaffarelliMcCann10} for the cost function $c(x, \xbar)=-\euclidean{x}{\xbar}$.


 \begin{prop}[Continuous optimal maps onto closed $c$-convex target pieces]\label{prop: u is a maximum of C^1 potentials}
 In addition to the hypotheses of Lemma \ref{lem: u is a maximum of potentials} and \eqref{MTW}, assume that $\sourcemeas$ and $\targetmeas$ on $\M$ and $\Mbar$ respectively satisfy conditions (I) and (II) above, and for some $i\in I$ the compact set $\targetpiece[i]$ is strictly $c$-convex with respect to the compact set
$\outerdom$. Then the $c$-convex function $\convpot[i]$ from Lemma \ref{lem: u is a maximum of potentials} belongs to $C^1(\outerdom)$,   
 \begin{align}
 \csubdiff{\convpot[i]}{\outerdom}&\subset \targetpiece[i],\label{eqn: c-subdifferential pieces}
 \end{align}
and for any $x\in\source$ the intersection $\csubdiff{\convpot}{x}\cap \targetpiece[i]$ contains at most one point.
\end{prop}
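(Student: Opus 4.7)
The plan is to identify $\convpot[i]$ with the Kantorovich potential of a suitable restricted optimal transport problem, and then invoke the $C^1$ regularity theory for optimal maps into strictly $c^*$-convex targets. Specifically, I would let $T(x):=\cExp{x}{D\convpot(x)}$ denote the optimal map from Theorem~\ref{thm: Brenier}, set $\altsourcedom[i]:=\source\cap T^{-1}(\targetpiece[i])$ (defined up to $\sourcemeas$-null sets), and introduce $\sourcemeas_i:=\sourcemeas|_{\altsourcedom[i]}$ and $\targetmeas_i:=\targetmeas|_{\targetpiece[i]}$. A standard gluing-and-replacement argument shows that $T|_{\altsourcedom[i]}$ is itself optimal between $\sourcemeas_i$ and $\targetmeas_i$. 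Combining the inequality $\convpot[i]\leq\convpot$ (from \eqref{eqn: maximum representation of potential}) with the observation that, for $x\in\altsourcedom[i]$, the point $T(x)\in\targetpiece[i]$ realises the defining supremum of $\convpot[i](x)$ so that $\convpot[i](x)=\convpot(x)$, one sees that the pair $(\convpot[i],\,u^c|_{\targetpiece[i]})$ attains the dual cost for the sub-transport from $\sourcemeas_i$ to $\targetmeas_i$. Hence $\convpot[i]$ is a $c$-convex Kantorovich potential for this sub-problem, with induced optimal map $x\mapsto\cExp{x}{D\convpot[i](x)}$.

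With this identification in hand, I would appeal to the $C^1$ regularity theory originating in Caffarelli \cite{Caffarelli92} for the quadratic cost and extended to MTW costs by Figalli--Kim--McCann \cite{FigalliKimMcCann13} and Guillen--Kitagawa \cite{GuillenKitagawa15}. The sub-problem inherits from (I)--(II) precisely the hypotheses these theorems require: $\targetmeas_i$ has density bounded above and below on $\targetpiece[i]$, $\sourcemeas_i$ has density bounded above on $\outerdom$ and away from zero on $\spt\sourcemeas_i$, the cost satisfies \eqref{MTW}, and $\targetpiece[i]$ is strictly $c^*$-convex with respect to $\outerdom$. Application of these results yields $\convpot[i]\in C^1(\outerdom)$.

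The remaining two assertions then follow formally. For \eqref{eqn: c-subdifferential pieces}, at every $x\in\outerdom$ the twist condition \eqref{B1} forces $\csubdiff{\convpot[i]}{x}=\{\cExp{x}{D\convpot[i](x)}\}$, and this element lies in $\targetpiece[i]$ by \eqref{eqn: gradient maps into pieces} on the almost-everywhere-differentiable set, extended everywhere by $C^1$ continuity together with compactness of $\targetpiece[i]$. For the final uniqueness claim, fix $x\in\source$ and $\xbar\in\csubdiff{\convpot}{x}\cap\targetpiece[i]$; the identity $\convpot(x)=-c(x,\xbar)-u^c(\xbar)$ combined with $\xbar\in\targetpiece[i]$ shows that $\xbar$ realises the supremum in \eqref{eqn: piecewise c-transform}, whence $\convpot[i](x)=\convpot(x)$ and $\xbar\in\csubdiff{\convpot[i]}{x}$. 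By the $C^1$ conclusion, the latter is a singleton, so $\csubdiff{\convpot}{x}\cap\targetpiece[i]$ contains at most one point.

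The main obstacle is the regularity step of paragraph two, since $\spt\sourcemeas_i$ need not be $c$-convex even though $\source$ is, and one must check that the cited regularity results genuinely apply to this restricted geometry (including continuity up to $\outerdom^{\bdry}$). For the quadratic cost this is exactly Caffarelli's original setting---target strict convexity plus density bounds, with no source convexity assumed; for general MTW costs one expects the cited theorems to cover the configuration directly, since the crucial ruling-out of a positive-dimensional $c$-subdifferential $\csubdiff{\convpot[i]}{x_0}$ proceeds by combining Loeper's maximum principle, strict $c^*$-convexity of $\targetpiece[i]$, and a Monge--Amp\`ere mass comparison using the lower density bound on $\targetmeas_i$, none of which requires $c$-convexity of $\spt\sourcemeas_i$.
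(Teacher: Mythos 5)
There is a genuine gap, and it is precisely the one you flag and then dismiss at the end. Your plan is to restrict to the sub-problem $\sourcemeas_i\to\targetmeas_i$ and apply the interior regularity theory of Caffarelli/Figalli--Kim--McCann/Guillen--Kitagawa to $\convpot[i]$. But the $C^1$ conclusion of those theorems for a potential hinges on strict ($c^*$-)convexity of the \emph{dual} potential of the problem being considered, and that strict convexity requires the two-sided Monge--Amp\`ere comparison for the dual, whose upper bound needs the target of the dual transport --- here $\spt\sourcemeas_i=\altsourcedom[i]^{\cl}$ --- to be ($c$-)convex. It is not: $\altsourcedom[i]$ is cut out by the tears and is generically non-convex, so $\cstarsubdiff{(\convpot[i])^c}{\cdot}$ can leak into regions carrying no mass of $\sourcemeas_i$ and the comparison fails. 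Even granting that the restricted argument can be salvaged (for the quadratic cost this is exactly Corollary~\ref{cor: continuous maps to convex target pieces} of the paper), it yields only $\convpot[i]\in C^{1,\alpha}_{loc}(\altsourcedom[i]^{\interior})$. The proposition asserts $\convpot[i]\in C^1(\outerdom)$ --- differentiability everywhere, including on the tears, inside the other pieces $\altsourcedom[j]$, and up to $\outerdom^{\bdry}$ --- and this global statement is what the later applications (e.g.\ the implicit function theorem applied to $u_i-u_j$ \emph{on} $\sing[ij]$) actually use. The remark following Corollary~\ref{cor: continuous maps to convex target pieces} makes exactly this point: without convexity of the relevant source piece one cannot preclude a segment of non-strict convexity of the dual potential, hence cannot conclude differentiability of $\convpot[i]$ up to $\altsourcedom[i]^{\bdry}$, let alone beyond it. Your final uniqueness step inherits the same defect, since the point $x\in\source$ there may well lie outside $\altsourcedom[i]^{\interior}$.

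The paper's proof avoids this by never restricting the measures. It first establishes \eqref{eqn: c-subdifferential pieces} directly from \cite[Lemma 5.1]{MaTrudingerWang05} together with \eqref{eqn: gradient maps into pieces} (note this must come \emph{before} the $C^1$ claim, not after it as in your write-up, since it is an input to the differentiability argument). It then applies the regularity theory to $u^c$, the dual potential of the \emph{full} problem transporting $\targetmeas$ to $\sourcemeas$: the target of that transport is $\source$, which \emph{is} ($c$-)convex by hypothesis (II), so one obtains $u^c\in C^{1,\bar\alpha}_{loc}$ and strictly $c^*$-convex on each $\targetpiece[i]^{\interior}$. If $\convpot[i]$ failed to be differentiable at \emph{any} $x\in\outerdom$, Loeper's theorem would produce a nontrivial segment $\ell\subset\subdiff{\convpot[i]}{x}\subset\coord{\targetpiece[i]}{x}$; strict convexity of $\coord{\targetpiece[i]}{x}$ forces $\ell$ to meet the interior, contradicting the strict $c^*$-convexity of $u^c$ there. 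This argument is insensitive to where $x$ sits relative to the partition of $\source$, which is how the global $C^1(\outerdom)$ conclusion (and then the uniqueness of $\csubdiff{\convpot}{x}\cap\targetpiece[i]$, via the representation \eqref{eqn: maximum rep}) is reached. To repair your proof you would need to replace the restricted dual by the full dual $u^c$ in the regularity step, at which point you recover the paper's argument.
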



\begin{proof}
 Since $\targetpiece[i]$ is $c$-convex with respect to $\outerdom$, combining 
 \cite[Lemma 5.1]{MaTrudingerWang05} with \eqref{eqn: gradient maps into pieces} yields
$\csubdiff{\convpot[i]}{\outerdom}\subset\targetpiece[i]$ to establish \eqref{eqn: c-subdifferential pieces}. 
 
 Next we show that each $\convpot[i]$ is $C^1$ on $\outerdom$. Indeed, note that $u^c$ is an optimal potential transporting $\targetmeas$ to $\sourcemeas$ with cost function $c^*$ defined on $\outertarget\times\outerdom$ by $c^*(\xbar, x):=c(x, \xbar)$, then by \cite[Theorem 2.1]{FigalliKimMcCann13} under \eqref{eqn: FKM conditions}, or \cite[Lemma 2.19, Theorem 1.2]{GuillenKitagawa15} under \eqref{eqn: GK conditions}, we have that 
$u^c \in C^{1,\bar \alpha}_{loc}$ for $\bar \alpha$ as described and strictly $c^*$-convex when restricted to each $\targetpiece[i]^{\interior}$. If there was a point $x$ where $\convpot[i]$ fails to be differentiable, by \cite[Theorem 3.1]{Loeper09} this implies the existence of some nontrivial line segment $\ell\subset\subdiff{\convpot[i]}{x}=\coord{\csubdiff{\convpot[i]}{x}}{x}\subset\coord{\targetpiece[i]}{x}$. However, by the strict convexity of $\coord{\targetpiece[i]}{x}$, this would imply that $\ell\cap \coord{\targetpiece[i]}{x}^{\interior}$ contains more than one point. It can be seen that this contradicts the strict ${c}^*$-convexity of $u^c$ on $\targetpiece[i]^{\interior}$, thus $\convpot[i]$ must be differentiable on $\outerdom$. The fact that the $c$-subdifferential of a $c$-convex function has a closed graph then implies $\convpot[i]\in C^1(\outerdom)$.

{Now if $x\in \source$ and $\csubdiff{\convpot}{x}\cap \targetpiece[i]$ contains more than one point, the same argument as the previous paragraph combined with the representation \eqref{eqn: maximum rep} again yields a contradiction.}  
\end{proof}

As a corollary to its proof we obtain the following interior homeomorphism result,
which can be upgraded to a diffeomorphism using results from the literature.

\begin{cor}[Optimal homeomorphisms onto open $c$-convex target pieces]\label{cor: interior homeo/diffeo}
Assume the same hypotheses as Proposition \ref{prop: u is a maximum of C^1 potentials}, but if condition \eqref{eqn: FKM conditions} is assumed from (II), additionally suppose that $\targetpiece[i]$ is strongly $c$-convex with respect to $\outerdom$. Then the map 
the map $T_i(x):=\cExp{x}{Du_i(x)}$ 
is a homeomorphism from the interior of 
$\{ x\in \spt \mu \mid u(x)=u_i(x)\}$ to $\targetpiece[i]^{\interior}$;  its inverse is $C^{\bar \alpha}_{loc}$ for 
some $\bar \alpha>0$ depending only on $n$ and the bounds (I).
If the densities of $\mu$ and $\nu$ are locally Dini continuous (respectively $C^{k+\alpha}_{loc}$ for any
$0< k +\alpha \not\in \N$)
on the interiors of these two sets, then $T_i$ defines a diffeomorphism 
whose derivatives are locally Dini continuous (or $C^{k+\alpha}_{loc}$ respectively), at least if $a_3>0$
or $c(x,\bar x) = - \euclidean{x}{\xbar}$;
see \cite{LTW10} re $a_3=0$.
\end{cor}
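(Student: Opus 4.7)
The plan is to build on the proof of Proposition \ref{prop: u is a maximum of C^1 potentials} rather than to start from scratch; most of the work is already done there, and the corollary merely repackages the information on $u_i$ and $u^c$ as a statement about the transport map $T_i$.

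First I would fix the domain of interest $U_i := \{x\in\source\mid u(x)=u_i(x)\}^{\interior}$. On $U_i$, the identity $u\equiv u_i$ combined with $u_i\in C^1(\outerdom)$ (Proposition \ref{prop: u is a maximum of C^1 potentials}) forces $u$ to be differentiable everywhere on $U_i$ with $Du=Du_i$; in particular $T_i$ coincides on $U_i$ with the optimal map $T$ of Theorem \ref{thm: Brenier}, and by \eqref{eqn: c-subdifferential pieces} sends $U_i$ into $\targetpiece[i]$. Continuity of $T_i$ then follows from $u_i\in C^1$ together with the smoothness of $\cExp{\cdot}{\cdot}$ guaranteed by \eqref{B1}.

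Next I would address injectivity and surjectivity. For injectivity, if $T_i(x_1)=T_i(x_2)=\xbar$ with $x_j\in U_i$, then $x_1,x_2\in\cstarsubdiff{u^c}{\xbar}$; but the strict $c^*$-convexity of $u^c$ on $\targetpiece[i]^{\interior}$ — established exactly as in the proof of Proposition \ref{prop: u is a maximum of C^1 potentials} via \cite{FigalliKimMcCann13} under \eqref{eqn: FKM conditions} (now using strong $c$-convexity to invoke that theorem) or \cite{GuillenKitagawa15} under \eqref{eqn: GK conditions} — implies $\cstarsubdiff{u^c}{\xbar}$ is a singleton, so $x_1=x_2$. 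For surjectivity onto $\targetpiece[i]^{\interior}$, I would dualize: for any $\xbar\in\targetpiece[i]^{\interior}$, the same FKM/GK regularity yields a unique $x=c^*\text{-Exp}_{\xbar}(Du^c(\xbar))\in\source$ with $\xbar\in\csubdiff{u}{x}$; the uniqueness clause of Proposition \ref{prop: u is a maximum of C^1 potentials} forces $u(x)=u_i(x)$, and continuity of $Du^c$ on $\targetpiece[i]^{\interior}$ combined with the strict inequality $u_i(x)>u_j(x)$ on a neighbourhood (via continuity of all $u_j$'s in \eqref{eqn: maximum representation of potential}) places $x$ in $U_i$.

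For the Hölder inverse I would simply note $T_i^{-1}(\xbar)=c^*\text{-Exp}_{\xbar}(Du^c(\xbar))$, which inherits its local $C^{\bar\alpha}$ regularity from the interior $C^{1,\bar\alpha}_{loc}$ regularity of $u^c$ on $\targetpiece[i]^{\interior}$ furnished by \cite{FigalliKimMcCann13,GuillenKitagawa15}, with $\bar\alpha$ depending only on $n$ and the density bounds in (I). The higher-regularity statement then reduces to the classical bootstrap for the Monge–Amp\`ere type equation satisfied by $u^c$ on $\targetpiece[i]^{\interior}$ with interior $C^1$ boundary data; under \eqref{MTW} with $a_3>0$ this is in \cite{LTW10}, and for $c(x,\xbar)=-\euclidean{x}{\xbar}$ it is standard Caffarelli theory.

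The main technical point will be checking that the FKM/GK regularity theorems really do apply at this level — in particular that $\targetpiece[i]$ plays the role of the $c^*$-convex target in those theorems when restricted to $u^c$. Once that reduction is in place, all that remains is the bookkeeping above, so I expect no conceptual difficulty beyond verifying that the strong $c$-convexity assumption added in \eqref{eqn: FKM conditions} is exactly what is needed to invoke \cite[Theorem 2.1]{FigalliKimMcCann13} for each piece rather than only for the union.
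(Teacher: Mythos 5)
Your overall strategy (dualizing through $u^c$ and quoting the FKM/GK regularity already invoked in Proposition \ref{prop: u is a maximum of C^1 potentials}) is the paper's, but two of your central steps have genuine gaps, and both trace back to the same missing ingredient: you never establish that $T_i$ maps $R^{\interior}$, $R:=\{x\in\spt\mu\mid u(x)=u_i(x)\}$, into $\targetpiece[i]^{\interior}$, nor that $T_i$ pushes $\mu|_{R^{\interior}}$ forward to $\nu|_{\targetpiece[i]}$. Your injectivity argument needs $\xbar=T_i(x_1)\in\targetpiece[i]^{\interior}$, because the strict $c^*$-convexity and $C^{1,\bar\alpha}_{loc}$ regularity of $u^c$ are only available on the interiors of the pieces; at a boundary point $\xbar\in(\targetpiece[i])^{\partial}$ nothing prevents $\cstarsubdiff{u^c}{\xbar}$ from containing two points, so the argument fails exactly where it is needed, and assuming $T_i(U_i)\subset\targetpiece[i]^{\interior}$ to fix it would be circular, since that inclusion is one of the main things to be proved. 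The paper resolves this by first proving the mass-balance claim $(T_i)_{\#}(\mu|_{R^{\interior}})=\nu|_{\targetpiece[i]}$ (using Lemma \ref{lem: dm} and \eqref{eqn: gradient maps into pieces}); that claim is what licenses applying \cite{FigalliKimMcCann13} or \cite{GuillenKitagawa15} to the \emph{restricted} transport problem, giving injectivity of $T_i$ on $R^{\interior}$, after which invariance of domain yields $T_i(R^{\interior})\subset\targetpiece[i]^{\interior}$ and only then does $S\circ T_i=\mathrm{id}_{R^{\interior}}$ follow from the equality case of \eqref{eqn: c-Young inequality}.

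Your surjectivity step contains two unjustified assertions. First, ``the uniqueness clause of Proposition \ref{prop: u is a maximum of C^1 potentials} forces $u(x)=u_i(x)$'' is a non sequitur: knowing that $\csubdiff{u}{x}\cap\targetpiece[i]$ has at most one point says nothing about which $u_j$ attains the maximum at $x=S(\xbar)$. The paper obtains $u(S(\xbar))=u_i(S(\xbar))$ by first proving $u^c=(u_i)^c$ on $\targetpiece[i]$ and then exploiting the equality case of the Young-type inequality; some such argument is required. Second, you invoke the strict inequality $u_i(x)>u_j(x)$ for $j\neq i$ to place $x$ in $U_i$, but this is never established at $x$ (your continuity remark only propagates a strict inequality you have not proved), and since the other pieces $\targetpiece[j]$ are merely disjoint compact sets --- no separation or convexity is assumed for $j\neq i$ --- it is not clear it holds pointwise. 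The paper avoids it entirely: having shown $S(\targetpiece[i]^{\interior})\subset R$, it uses injectivity and continuity of $S$ (invariance of domain) to conclude this image is open, hence lies in $R^{\interior}$. With these repairs your outline becomes the paper's proof; your remaining points (continuity of $T_i$ from $u_i\in C^1(\outerdom)$, the $C^{\bar\alpha}_{loc}$ inverse coming from $u^c\in C^{1,\bar\alpha}_{loc}$, the role of strong $c$-convexity under \eqref{eqn: FKM conditions}, and the higher regularity via \cite{LTW10} or the quadratic-cost literature) are consistent with the paper.
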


\begin{proof}
The strict $c^*$-convexity of $u^c \in C^{1,\bar \alpha}_{loc}$ from the preceding proof
shows the map $S(\bar x) := \cstarExp{\bar x}{Du^c(\bar x)}$
restricted to $\targetpiece[i]^{\interior}$ is a homeomorphism (and $C^{\bar \alpha}_{loc}$).  
We assert 
this restriction has range $R^{\interior}$ where 
$R := \{x \in \spt \mu \mid u(x)=u_i(x)\}$, and its inverse is $T_i$.  

First note that $u^c(\xbar)=(u_i)^c(\xbar)$ for $\xbar\in \targetpiece[i]$. Indeed, $u_i \le u$ implies $(u_i)^c \ge u^c$ everywhere,  while for $\bar x \in \targetpiece[i]$ 
the opposite inequality is obtained by  taking $\bar y=\bar x$ in
$$
(u_i)^c(\bar x) = \sup_{x \in \Omega} [-c(x,\bar x) + \inf_{\bar y \in \bar \Omega_i} (c(x,\bar y) + u^c(\bar y))].
$$
Then, recall 
\begin{equation}\label{eqn: c-Young inequality}
u(x) + u^c(\bar x) + c(x,\bar x) \ge 0 \qquad {\rm for\ all}\ (x,\bar x) \in \Omega \times \bar \Omega,
\end{equation}
and equality holds if and only if $\bar x \in \csubdiff{u}{x}$ (or equivalently $x \in \cstarsubdiff{u^c}{\bar x})$.
For $\bar x \in \targetpiece[i]^{\interior}$,  we have $\cstarsubdiff{u^c}{\bar x} = \{S(\bar x)\}$ thus
\begin{align*}
 u(S(\xbar))&=-c(S(\xbar), \xbar)-u^c(\xbar)=-c(S(\xbar), \xbar)-(u_i)^c(\xbar)\\
 &=-c(S(\xbar), \xbar)+\inf_{y\in\outerdom}(c(y, \xbar)+u_i(y))\leq u_i(S(\xbar)).
\end{align*}
Since the reverse inequality always holds, we have $u(S(\xbar))=u_i(S(\xbar))$.
Then as $S$ is injective and continuous, the set $S(\targetpiece[i]^{\interior})$ is open, hence it must be contained in $R^{\interior}$.  

We now claim that $T_i$ pushes the restriction of $\mu$ to $R^{\interior}$ forward to the restriction of $\nu$ to $\targetpiece[i]$. Let us write $T(x):=\cExp{x}{Du(x)}$, defined for $x\in \Dom(Du)$ so $T_\#\mu=\nu$. By Lemma \ref{lem: dm} and \eqref{eqn: gradient maps into pieces}, we see that $x\in \Dom(Du)$ with $T(x)\in \targetpiece[i]$ only if $u(x)=u_i(x)$ and $u(x)>u_j(x)$ for all $j\neq i$, in particular, $T^{-1}(\targetpiece[i])\subset R^{\interior}$. On the other hand, if $x\in R^{\interior}$, then $u=u_i$ on a neighborhood of $x$ and in particular, $u$ is differentiable at $x$. Hence by \eqref{B1} we must have $\csubdiff{u}{x}=\{T_i(x)\}=\{T(x)\}$ for all $x\in R^{\interior}$. Thus if $\bar{E}\subset \targetpiece[i]$ is measurable, we have
\begin{align*}
 \mu(R^{\interior}\cap T_i^{-1}(\bar{E}))=\mu(R^{\interior}\cap T^{-1}(\bar{E}))=\mu(T^{-1}(\bar{E}))=\nu(\bar{E})
\end{align*}
and the claim is proven.

Thus again using \cite[Theorem 2.1]{FigalliKimMcCann13} under \eqref{eqn: FKM conditions} (along with the assumption of strong $c$-convexity of $\targetpiece[i]$ with respect to $\outerdom$ in this case), and \cite[Lemma 2.19, Theorem 1.2]{GuillenKitagawa15} under \eqref{eqn: GK conditions} gives that $T_i$ is continuous and injective on $R^{\interior}$, hence 
$T_i(R^{\interior}) \subset \targetpiece[i]^{\interior}$.  

We complete the proof of the claim by showing $S \circ T_i = id_{R^{\interior}}$.
Since for each $x \in R^{\interior}$,  we have $\csubdiff{u}{x} = \{T_i(x)\} \subset \targetpiece[i]^{\interior}$, as argued
above this yields $\cstarsubdiff{u^c}{T_i(x)}) = \{S(T_i(x)))\}$.  The equality conditions in \eqref{eqn: c-Young inequality} then
force $x=S(T_i(x))$ as required.

When \eqref{MTW} holds with $a_3>0$,  the local Dini or H\"older continuity asserted then follows from \cite{LTW10}, where it is also claimed that the results extend to $a_3=0$, although details of this 
extension are deferred to a forthcoming publication.  
For $c(x,\bar x) = - \euclidean{x}{\xbar}$, the details can be 
found in \cite{Caffarelli92}, \cite{Wang92}, and \cite{JianWang07}.
\end{proof}

Next we wish to make some finer observations on the structure of the boundaries of the sets above, and in particular the sets where more than two of the functions $u_i$ coincide. For this we need some notion of ``independence'' for subcollections of $\{\targetpiece[i]\}_{i\in I}$, which we call \emph{affine independence}.
Its role is to guarantee the natural implicit function theorem hypothesis is satisfied in the applications which follow.


\begin{defin}[Affine independence]\label{defin: affine independence of sets}
 A finite collection $\left\{\alttargetpiece[i]\right\}_{i=1}^{k}$ of $k\le n+1$ subsets of an $n$ dimensional vector space is said to be \emph{affinely independent} if no $k-2$ dimensional affine subspace intersects all of the sets in the collection.
(Equivalently, any collection of $k$ points, each from a different set $\alttargetpiece[i]$, is affinely independent in the usual sense.)
\end{defin}
We also define an alternate notion measuring the ``size'' of a singular point that we call the \emph{multiplicity}. Essentially the multiplicity of a singular point counts ``how many pieces of the target domain does a singular point get transported to?''


\begin{defin}[Multiplicity along tears]\label{defin: multiplicity}
Let $c$ be a cost function satisfying \eqref{B1} and $\sourcemeas$, $\targetmeas$ probability measures with $\sourcemeas$ absolutely continuous with respect to volume measure. Also suppose $\target\subset\outertarget$ is a disjoint union of some collection of sets $\left\{\targetpiece[i]\right\}_{i\in I}$ for some index set $I$ and $\convpot$ is an optimal potential of \eqref{OT} transporting $\sourcemeas$ to $\targetmeas$, with $x_0\in \source$. Then we define the \emph{multiplicity of $\convpot$ at $x_0$ 
relative to $\left\{\targetpiece[i]\right\}_{i\in I}$} by 
\begin{align*}
 \#\left\{i\in I\mid \targetpiece[i]\cap \csubdiff{\convpot}{x_0}\neq \emptyset\right\}.
\end{align*} 
When the collection $\left\{\targetpiece[i]\right\}_{i\in I}$ is clear, we will simply refer to the multiplicity of $\convpot$ at $x_0$.
\end{defin}
Finally, in order to simplify the statements and proofs of our results, we define notation for coincidence sets and multiplicity sets of the functions $u_i$ and $u$.
\begin{defin}[Tearing and coincidence sets]\label{defin: coincidence sets}
Let $c$ be a cost function satisfying \eqref{B1}. Also take compactly supported probability measures $\sourcemeas$ and $\targetmeas$ with $\sourcemeas$ absolutely continuous, and $\target = \cup_{i\in I} \overline \Omega_i$ a {\em finite disjoint} union of {\em compact} sets $\overline \Omega_i$. Then Lemma \ref{lem: u is a maximum of potentials} asserts
\begin{equation*}
 u=\sup_{i\in I}u_i \quad \mbox{\rm with}\quad \cExp{x}{Du_i(x)}\in \targetpiece[i],\ \forall\;x\in \Dom(D u).
\end{equation*}
For any subset $I'\subset I$ of indices, we then define the set
\begin{align}
 \sing[I']:&=\{x\in \outerdom\mid u_i(x)=u_j(x),\ \forall\;i,\ j\in I'\},\label{eqn: Sigma without up}\\
  \sing[I']^{\uparrow}:&=\{x\in \outerdom\mid u(x)=u_i(x),\ \forall\;i\in I'\}.
\label{eqn: Sigma up}
\end{align}
Also for any $k\in \Z_{\geq 0}$ we define
\begin{align}
 M_k:&=\{x\in\R^n\mid u\text{ has multiplicity exactly }k\text{ at }x\},\\
 M_{\geq k}:&=\{x\in\R^n\mid u\text{ has multiplicity at least }k\text{ at }x\},
\label{Multiplicity k}
 \end{align}
where $u$ is the optimal potential as in \eqref{eq: u is a maximum of potentials} and multiplicity here taken relative to the collection $\{\targetpiece[i]\}_{i\in I}$ in Definition \ref{defin: multiplicity}.
\end{defin}
Under a suitable assumption of affine independence, a quick application of the usual implicit function theorem yields the following corollary from Proposition \ref{prop: u is a maximum of C^1 potentials}.
\begin{cor}[Affine independence of convex targets yields $C^1$ smooth tears of each expected codimension]
\label{cor: convexity yields C^1 smoothness of tears}
Assume $c$ is a cost function satisfying \eqref{B1} and \eqref{MTW}, $\mu$ and $\nu$ are probability measures on $M$ and $\Mbar$ respectively, and conditions (I) and (II) (before \eqref{eqn: FKM conditions}) hold. Let $\spt \nu = \cup_{i \in I}\targetpiece[i]$ be a finite disjoint union of compact sets,
and $u= \max u_i$ be from Lemma \ref{lem: u is a maximum of potentials}.
Finally suppose there is a collection of indices $i_1,\ldots, i_k\in I$ for which
$\{\coord{\targetpiece[i_1]}{x},\ldots, \coord{\targetpiece[i_k]}{x}\}$ forms an affinely independent collection
of strictly convex sets for every $x\in \sing[i_1,\ldots,i_k]$.  Then $\sing[i_1,\ldots,i_k]$ is a $C^1$ submanifold of $M$ having codimension $k-1$.
\end{cor}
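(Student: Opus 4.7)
The plan is to realize $\sing[i_1,\ldots,i_k]$ as the zero level set of a $C^1$ map whose Jacobian has constant maximal rank $k-1$ on that set, and then invoke the classical $C^1$ implicit function theorem.

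First I would invoke Proposition \ref{prop: u is a maximum of C^1 potentials} to conclude that each $u_{i_j}$ for $j=1,\ldots,k$ belongs to $C^1(\outerdom)$, using the strict $c$-convexity of the $\targetpiece[i_j]$ with respect to $\outerdom$ (which is implied by the hypothesis that $\coord{\targetpiece[i_j]}{x}$ is strictly convex at every $x\in \sing[i_1,\ldots,i_k]$, together with the $c$-convexity of $\outerdom$ and $\outertarget$ with respect to each other and a connectedness argument); the same proposition also gives the inclusion $\csubdiff{u_{i_j}}{x}\subset \targetpiece[i_j]$. Consequently the map
\begin{align*}
F:\outerdom\to\R^{k-1},\qquad F(x):=\bigl(u_{i_2}(x)-u_{i_1}(x),\ldots,u_{i_k}(x)-u_{i_1}(x)\bigr),
\end{align*}
is $C^1$, and by definition $\sing[i_1,\ldots,i_k]=F^{-1}(0)$.

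The heart of the argument is showing that $DF(x)$ has rank $k-1$ at every $x\in\sing[i_1,\ldots,i_k]$. Working in a fixed local coordinate chart around $x\in\outerdom$ (so that $T_x^*\M$ is identified with $\R^n$), the rows of $DF(x)$ are the vectors $Du_{i_j}(x)-Du_{i_1}(x)$ for $j=2,\ldots,k$. From \eqref{B1} and the $c$-subdifferential inclusion above, each $Du_{i_j}(x)=-D_xc(x,T_{i_j}(x))$ with $T_{i_j}(x)\in\targetpiece[i_j]$, so in the notation of \eqref{eqn: c-exp image notation} we have $Du_{i_j}(x)\in\coord{\targetpiece[i_j]}{x}$. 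The hypothesis that $\{\coord{\targetpiece[i_j]}{x}\}_{j=1}^k$ is an affinely independent collection in $T_x^*\M$ then forces the $k$ points $Du_{i_1}(x),\ldots,Du_{i_k}(x)$ to be affinely independent in the usual sense (any $k-2$-dimensional affine flat containing them all would intersect all $k$ sets), which is equivalent to the $k-1$ differences $Du_{i_j}(x)-Du_{i_1}(x)$ being linearly independent. Thus $DF(x)$ has rank $k-1$ throughout $\sing[i_1,\ldots,i_k]$, and the classical $C^1$ implicit function theorem expresses $\sing[i_1,\ldots,i_k]$ locally near each of its points as the graph of a $C^1$ function of $n-(k-1)$ variables, making it a $C^1$ submanifold of codimension $k-1$ in $M$.

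The main obstacle is the step translating \emph{affine independence of sets in $T_x^*\M$} into \emph{linear independence of the gradient differences}. The point is purely linear-algebraic: any relation among the rows of $DF(x)$ forces $Du_{i_1}(x),\ldots,Du_{i_k}(x)$ to lie in a common $(k-2)$-dimensional affine subspace, contradicting the affine independence hypothesis precisely because each selected point $Du_{i_j}(x)$ lies in the corresponding set $\coord{\targetpiece[i_j]}{x}$. A secondary subtlety is to verify the $C^1$ hypothesis of Proposition \ref{prop: u is a maximum of C^1 potentials} under the present formulation — strict convexity is assumed only on $\sing[i_1,\ldots,i_k]$ rather than for all $x\in\outerdom$ — but the proof of that proposition only uses strict $c$-convexity at points where differentiability might fail, and standard arguments via the closed-graph property of the $c$-subdifferential yield $C^1$ regularity wherever $\csubdiff{u_{i_j}}{x}$ meets the strictly $c$-convex component $\targetpiece[i_j]$.
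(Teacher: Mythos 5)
Your proposal is correct and follows essentially the same route as the paper: both express $\sing[i_1,\ldots,i_k]$ as the zero set of the $C^1$ map built from differences of the $u_{i_j}$ (which are $C^1$ by Proposition \ref{prop: u is a maximum of C^1 potentials}), use $Du_{i_j}(x)\in\coord{\targetpiece[i_j]}{x}$ together with affine independence to get linear independence of the gradient differences, and conclude via the classical implicit function theorem. Your choice of $u_{i_1}$ rather than $u_{i_k}$ as the reference function is immaterial, and your closing remark on the strict-convexity hypothesis addresses a point the paper glosses over in the same spirit.
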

\begin{proof}
Reordering if necessary,  we may assume $i_j=j$ for each $j\le k$.
The set $\sing[1,\ldots,k]$ then consists of the zero set of the system of $k-1$ equations
\begin{equation}\label{first k coincide}
u_1(x) = u_2(x) = \cdots = u_{k}(x),
\end{equation}
which are all contained in $C^1(\outerdom)$ by Proposition \ref{prop: u is a maximum of C^1 potentials}.
The implicit function theorem condition for the zero set of this system to be a $C^1$ submanifold
of the appropriate dimension is that the vectors $\{D u_j(x)- D u_k(x)\}_{j=1}^{k-1}$ be linearly independent
when \eqref{first k coincide} holds,
which is equivalent to affine independence of  $\{D u_j(x)\}_{j=1}^k$.
But since $D u_i(x) \in \coord{\targetpiece[i]}{x}$ by \eqref{eqn: gradient maps into pieces} and \eqref{B1}, this follows from the affine independence of $\{\coord{\targetpiece[i]}{x}\}_{i=1}^k$.
\end{proof}
%
Next, we establish two elementary relationships between the sets $\sing[]^\uparrow$ and $M$. Specifically, we show that the closure $M_k^{cl}$ of all points with multiplicity lie in a union of tears; we later prove that when the disjoint components of $\spt \nu = \cup_{i\in I} \overline \Omega_i$
can be separated by hyperplanes pairwise \eqref{eqn: subdifferentials disjoint},
these tears lie in DC submanifolds.
%

\begin{lem}[Covering multiplicity sets with tears]
%
%
Suppose that $c$ is a cost function satisfying \eqref{B1}, $\mu$ and $\nu$ are probability measures with $\mu$ absolutely continuous with respect to the volume measure, and $\target=\bigcup_{i\in I}\targetpiece[i]$ is a disjoint union of compact sets. Then multiplicity is upper semicontinuous:
\begin{align}\label{eqn: multiplicity closure}
  M_k^{\cl} \subset M_{\geq k}.
\end{align}
Additionally, fix a positive integer $k$ and suppose that for any collection of indices $I'\subset I$ with $\#(I')=k$ and $x\in \outerdom$
\begin{align}
 \left\{\coord{\targetpiece[i]}{x}\right\}_{i\in I'}\label{eqn: k are independent}
\end{align}
is affinely independent. Then
\begin{align}
 M_{\geq k} \subset \bigcup_{\{I'\subset I\mid \#(I')=k\}}\sing[I']^\uparrow.
\end{align}
\end{lem}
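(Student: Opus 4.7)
The plan is to treat the two inclusions separately. The first is a semicontinuity-of-multiplicity statement that reduces to the closed graph property of the $c$-subdifferential together with finiteness of $I$. The second is an unpacking of the definitions of $u_i$ and of the $c$-subdifferential via the equality case of the $c$-Young inequality.

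For the first inclusion $M_k^{\cl}\subset M_{\geq k}$, take a sequence $x_j\to x$ with each $x_j\in M_k$. By definition each $x_j$ has $k$ distinct indices $i_1(j),\ldots,i_k(j)\in I$ with $\csubdiff{u}{x_j}\cap \targetpiece[i_l(j)]\neq \emptyset$. Since $I$ is finite (from Definition \ref{defin: coincidence sets}), the pigeonhole principle and extraction of a subsequence let me assume the tuple of indices is constant, say $(i_1,\ldots,i_k)$. Pick $\xbar_{l,j}\in \csubdiff{u}{x_j}\cap \targetpiece[i_l]$; by compactness of $\targetpiece[i_l]$, another subsequence makes $\xbar_{l,j}\to \xbar_l\in \targetpiece[i_l]$. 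Now for each fixed $y\in\outerdom$, the defining inequality
\begin{align*}
-c(y,\xbar_{l,j})+c(x_j,\xbar_{l,j})+u(x_j)\leq u(y)
\end{align*}
passes to the limit: $c\in C^4$ gives joint continuity, while $u$ is Lipschitz on $\outerdom$ (as a $c$-convex function with bounded $c$-subdifferential, via the usual extension argument in Remark \ref{rmk: extendability} applied here), so $u(x_j)\to u(x)$. The limiting inequality is precisely $\xbar_l\in \csubdiff{u}{x}$. Since the $i_l$ are distinct, the $c$-subdifferential of $u$ at $x$ meets $k$ distinct pieces $\targetpiece[i_l]$, yielding $x\in M_{\geq k}$.

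For the second inclusion, fix $x\in M_{\geq k}$ and choose $k$ distinct indices $I'=\{i_1,\ldots,i_k\}\subset I$ and $\xbar_l\in \csubdiff{u}{x}\cap \targetpiece[i_l]$. The equality case of the $c$-Young inequality (which is built into the definition of $\csubdiff{u}{x}$, using that $u$ is $c$-convex so $u^{cc^*}=u$) gives
\begin{align*}
u(x) = -c(x,\xbar_l)-u^c(\xbar_l), \qquad 1\leq l\leq k.
\end{align*}
Because $\xbar_l\in \targetpiece[i_l]$, the very definition \eqref{eqn: piecewise c-transform} of $u_{i_l}$ as a supremum over $\targetpiece[i_l]$ yields $u_{i_l}(x)\geq -c(x,\xbar_l)-u^c(\xbar_l)=u(x)$. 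Combined with the pointwise bound $u_{i_l}\leq u$ from \eqref{eqn: maximum representation of potential}, this forces $u(x)=u_{i_l}(x)$ for every $l$, i.e.\ $x\in \sing[I']^\uparrow$, completing the covering.

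The main (minor) obstacle is the continuity step in the first inclusion: one needs $u(x_j)\to u(x)$ in order to pass to the limit in the $c$-subdifferential inequality. The cleanest way to obtain this is through the Lipschitz bound on $c$-convex functions with $c$-subdifferentials confined to the compact set $\outertarget$ and $c\in C^4$, exactly as in Remark \ref{rmk: extendability}. Note that the affine independence hypothesis \eqref{eqn: k are independent} is not consumed by this argument; its role is to keep $M_{\geq k}$ from being trivially empty beyond the ambient dimension and to feed into subsequent DC-rectifiability statements via the implicit function machinery of Section \ref{section: explicit function theorem}.
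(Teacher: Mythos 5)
Your proof is correct. The first inclusion follows the paper's argument essentially step for step: finiteness of $I$ lets you fix the index tuple along a subsequence, compactness of each $\overline\Omega_{i_l}$ gives convergent $\bar x_{l,j}$, and the closed-graph property of $\partial_c u$ (which the paper invokes as ``upper semicontinuity of the $c$-subdifferential'' and you verify by hand, using continuity of $c$ and the Lipschitz continuity of $u$) passes the defining inequality to the limit.

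Your second inclusion, however, takes a genuinely different route. The paper argues by contraposition: if $x_0$ lies in no $\Sigma_{I'}^{\uparrow}$ with $\#(I')=k$, then at most $k'\le k-1$ of the $u_i$ are active at $x_0$, so by Lemma \ref{lem: dm} the subdifferential $\partial u(x_0)$ is contained in the convex hull of the corresponding $k'$ sets $\coord{\targetpiece[i_j]}{x_0}$, and a $k$-th target piece meeting $\partial_c u(x_0)$ would then place a point of that piece in the convex hull of points from $k'\le k-1$ others, contradicting \eqref{eqn: k are independent}. You instead argue forward: each $\bar x_l\in \partial_c u(x)\cap\overline\Omega_{i_l}$ saturates the $c$-Young inequality, so $u(x)=-c(x,\bar x_l)-u^c(\bar x_l)\le u_{i_l}(x)\le u(x)$, forcing $u_{i_l}(x)=u(x)$ for all $l$ and hence $x\in\Sigma_{I'}^{\uparrow}$. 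This is correct, shorter, and — as you observe — it never uses the affine independence hypothesis, so it establishes the second inclusion unconditionally, whereas the paper's contrapositive argument does consume \eqref{eqn: k are independent}. (Your closing remark about the hypothesis serving to keep $M_{\ge k}$ nonempty is not quite the right gloss — affine independence makes high multiplicity harder, not easier, and its real purpose is the implicit-function-type nondegeneracy used in Theorem \ref{thm: higher codimension euclidean} — but that aside does not affect the validity of the proof.)
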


\begin{proof}
%
%
%
Suppose $x_0\in M_k^{\cl}$, so there is a sequence $\{x_m\}_{m=1}^\infty\subset M_k$ converging to $x_0$. We may pass to a subsequence and assume, without loss of generality, that each $\csubdiff{u}{x_m}$ only intersects $\targetpiece[1],\ldots,\targetpiece[k]$ out of the collection $\{\targetpiece[i]\}_{i\in I}$, and take $\xbar_{i, m}\in \csubdiff{u}{x_m}\cap\targetpiece[i]$ for $i\in \{1,\ldots, k\}$. Since each $\targetpiece[i]$ is compact, we may pass to further subsequences to assume each $\xbar_{i, m}$ converges as $m\to\infty$ to some $\xbar_i\in \targetpiece[i]$, and by upper semicontinuity of the $c$-subdifferential we see that $\xbar_i\in\csubdiff{u}{x_0}$, meaning $x_0\in M_{\geq k}$. 

Now assume \eqref{eqn: k are independent} holds and take $x_0 \in \Omega \setminus \bigcup_{\{I'\subset I \mid \# (I')=k\}}\sing[I']^\uparrow$. If $\#(I)<k$, then clearly $x_0\not\in M_{\geq k}$, thus assume $\#(I)\geq k$. 
From \eqref{eq: u is a maximum of potentials} it is clear that $u(x_0)=u_i(x_0)$ for at least one index $i$, and this can only hold for at most $k'\leq k-1$ distinct indices; suppose we have $u(x_0)=u_{i_j}(x_0)$ for $1\leq j\leq k'$ and strict inequality for all other indices. Then by Lemma \ref{lem: dm} and \eqref{eqn: gradient maps into pieces}
\begin{align*}
 \coord{\csubdiff{u}{x_0}}{x_0}&\subset \subdiff{u}{x_0}\subset \ch\left(\bigcup_{1\leq j\leq k'}\ch\left(\coord{\targetpiece[i_j]}{x_0}\right)\right)= \ch\left(\bigcup_{1\leq j\leq k'}\coord{\targetpiece[i_j]}{x_0}\right).
\end{align*}
Thus if the multiplicity of $u$ at $x_0$ is $k$ or greater, there exists an index $i'\not\in \{i_1,\ldots, i_{k'}\}$ for which $\csubdiff{u}{x_0}\cap\targetpiece[i']\neq \emptyset$, by the above inclusion this implies there is a point in $\coord{\targetpiece[i']}{x_0}$ which can be written as the convex combination of $k'$ points, one from each of the sets $\{\coord{\targetpiece[i_1]}{x_0},\ldots, \coord{\targetpiece[i_k']}{x_0}\}$. Since $k'\leq k-1$ and $\#(I)\geq k$, we can complete $\{i_1,\ldots, i_k',\ i'\}$ to a subset of $I$ with cardinality $k$ to obtain a contradiction with \eqref{eqn: k are independent}, hence $x_0\not\in M_{\geq k}$.

\end{proof}

\section{Global structure of optimal map discontinuities: quadratic cost}
\label{section: global quadratic}

We state the results of this section in the model case $c(x, \xbar)=-\euclidean{x}{\xbar}$ on $\R^n\times \R^n$, where the proofs are much simpler and the geometric picture easier to understand. It is easily verified that this cost function satisfies \eqref{B1} and \eqref{MTW}, both $\cExp{x}{\cdot}$ and $\cstarExp{\xbar}{\cdot}$ are the identity mapping for any $x$ and $\xbar$, and $c$- and $c^*$-convexity of sets reduces to the usual convexity of a set.

Our first result is the following proposition which --- apart from its final sentence --- follows rapidly
from our explicit function theorem. It will be extended to MTW costs in a subsequent section.

\begin{prop}[Hyperplane separated components induce DC tears]\label{prop: euclidean case interface}
 Let $c(x, \xbar)=-\euclidean{x}{\xbar}$. Also suppose $\sourcemeas$ and $\targetmeas$ are absolutely continuous probability measures with bounded supports, and $\target=\targetpiece[1]\cup \targetpiece[2]$ is such that $\targetpiece[1]$ and $\targetpiece[2]$ are strongly separated by some hyperplane $\Pi$.
 
 Then an optimal potential $u$ transporting $\sourcemeas$ to $\targetmeas$ can be written $u=\max\{u_1, u_2\}$, where $u_1$ and $u_2$ are convex functions, finite on $\R^n$ such that 
\begin{align}\label{eqn: euclidean mapping destination}
 \nabla u_i(x)&\in \targetpiece[i],\quad \forall\; x\in\Dom(\nabla u).
\end{align}
Moreover, the sets 
\begin{align*}
 \sing[]:&=\left\{x\in\R^n\mid \subdiff{u}{x}\cap\targetpiece[i]\neq\emptyset,\ i=1, 2\right\}=\{x\in\R^n\mid u_1(x)=u_2(x)\},\\
 C_1:&=\left\{x\in \R^n\mid \subdiff{u}{x}\cap\targetpiece[2]=\emptyset\right\}=\{x\in\R^n\mid u_1(x)>u_2(x)\},\\
 C_2:&=\left\{x\in\R^n\mid \subdiff{u}{x}\cap\targetpiece[1]=\emptyset\right\}=\{x\in\R^n\mid u_1(x)<u_2(x)\}.
\end{align*}
 are connected and given by the graph, open epigraph, and open subgraph respectively of a globally Lipschitz DC function $h$ defined on the hyperplane $\Pi$.
 
 If $\source$ is convex and $\targetpiece[i]$ is connected for either $i=1$ or $2$, then $\source\cap (C_i\cup \sing[])$ is also connected.
\end{prop}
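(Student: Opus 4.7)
The plan is to derive the connectedness of $S := \source \cap (C_i \cup \sing[])$ from that of $\targetpiece[i]$ by applying Lemma \ref{lem: connectivity of c-subdifferential} to the dual optimal transport problem; by symmetry between the two pieces I may take $i=1$.

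First I would exploit the fact that for the quadratic cost both \eqref{B1} and \eqref{MTW} are symmetric under the involution $c \leftrightarrow c^*$ (indeed $c=c^*$ up to relabeling), and that all notions of $c$-convexity reduce to ordinary convexity. This allows Lemma \ref{lem: connectivity of c-subdifferential} to be applied with the roles of source and target swapped: taking $\outerdom = \source$ (compact and convex by hypothesis), $\outertarget = \ch(\target)$, and the $c^*$-convex function $u^c$ in place of $u$, the lemma applied to the connected subset $\targetpiece[1]$ yields that $\cstarsubdiff{u^c}{\targetpiece[1]}$ is a connected subset of $\source$.

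Next I would identify $\cstarsubdiff{u^c}{\targetpiece[1]}$ with $S$. For the quadratic cost, $c$- and $c^*$-subdifferentials coincide with the usual Euclidean subdifferential, and convex duality gives $x \in \cstarsubdiff{u^c}{\bar x_0}$ if and only if $\bar x_0 \in \subdiff{u}{x}$; thus the set in question equals $\{x \in \source : \subdiff{u}{x} \cap \targetpiece[1] \neq \emptyset\}$. For $x \in C_1 \cup \sing[]$, any maximizer $\bar x_0 \in \targetpiece[1]$ in the supremum \eqref{eqn: piecewise c-transform} defining $u_1(x)$ satisfies $u(y) \ge u_1(y) \ge \langle y,\bar x_0\rangle - u^c(\bar x_0)$ for all $y$, with equality at $x$, so $\bar x_0 \in \subdiff{u}{x} \cap \targetpiece[1]$. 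For $x \in C_2$, the existence of any $\bar x_0 \in \targetpiece[1] \cap \subdiff{u}{x}$ would force $u_1(x) \ge \langle x, \bar x_0\rangle - u^c(\bar x_0) = u(x) = u_2(x)$, contradicting $u_1(x) < u_2(x)$. Together these establish $\cstarsubdiff{u^c}{\targetpiece[1]} = S$.

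The main obstacle is the bookkeeping required to invoke Lemma \ref{lem: connectivity of c-subdifferential} in this dual form on the restricted domain $\source$: one must verify that $u^c$ is $c^*$-convex on $\ch(\target)$ (automatic from the $c$-transform construction) and that restricting the ambient source from the original domain down to the convex set $\source$ preserves the $c$-convexity of $u$ (which it does, as $c$-convexity is inherited by convex subdomains). Once these verifications are in place, connectedness of $\cstarsubdiff{u^c}{\targetpiece[1]}$ combined with its identification as $S$ completes the proof.
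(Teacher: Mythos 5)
Your overall strategy --- apply Lemma \ref{lem: connectivity of c-subdifferential} to the dual problem and identify $\cstarsubdiff{u^c}{\targetpiece[1]}$ with $S=\source\cap(C_1\cup\sing[])$ --- is exactly the paper's, and you correctly reduce the final claim to that identification. (You do not address the first two assertions of the proposition, but those follow directly from Lemma \ref{lem: u is a maximum of potentials} and Corollary \ref{cor: explicit function theorem}; the final sentence is the real content.)

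The gap is in the identification itself, at boundary points. If you work with the global subdifferential on $\R^n$ (which is what the decomposition $u=\max\{u_1,u_2\}$ with $u_i$ finite on $\R^n$ provides), then $\cstarsubdiff{u^c}{\targetpiece[1]}=\{x\mid\subdiff{u}{x}\cap\targetpiece[1]\neq\emptyset\}$ carries no automatic restriction to $\source$: for $\xbar_0\in\targetpiece[1]^{\partial}$ the set $\subdiff{u^*}{\xbar_0}$ may contain points outside $\source$ (the inclusion $\subdiff{u^*}{\xbar_0}\subset\source$ via \cite[Theorem 10.28]{Villani09} and convexity of $\source$ is only guaranteed for interior points $\xbar_0$), so the connected set produced by the lemma can be strictly larger than $S$. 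If instead you restrict the ambient domain to $\source$ as you propose, the problem reappears on the other side: for $x\in(\source)^{\partial}$ the restricted $c$-subdifferential is enlarged by the normal cone $N_{\source}(x)$, so a point $x\in(\source)^{\partial}\cap C_2$ can lie in $\cstarsubdiff{u^c}{\targetpiece[1]}$ even though $\subdiff{u}{x}\cap\targetpiece[1]=\emptyset$; your exclusion argument for $C_2$ relies on the global subgradient inequality and does not survive the restriction. This boundary issue is precisely what the paper's proof spends most of its length on: it replaces $u$ by $\tilde u=u+\dist(\cdot,\source)^2$, which has the same subdifferential as $u$ on $\source$ yet is finite and convex on all of $\R^n$, computes $d^*(\ybar)=h(\ybar)+\norm{\ybar}^2/4$ with $h$ the support function of $\source$, shows $\tilde u^*\equiv u^*$ on $\targetpiece[1]$, and thereby deduces $\subdiff{\tilde u^*}{\xbar}\subset\source$ for every $\xbar\in\targetpiece[1]$, boundary points included. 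Some such device is needed; the remark that restriction ``preserves $c$-convexity'' does not supply it.
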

\begin{proof}
 Let us assume $\Pi=\{x\in\R^n\mid x^n=0\}=\R^{n-1}$. By Lemma \ref{lem: u is a maximum of potentials} (note we do not necessarily need boundedness of $\outerdom$, see Remark \ref{rmk: no compactness needed}) 
 we find that $u=\max\{u_1, u_2\}$, both $u_i$ are convex and finite on $\R^n$, and we have \eqref{eqn: euclidean mapping destination}. Since $\targetpiece[1]$ and $\targetpiece[2]$ are strongly separated by $\R^{n-1}$, so are their convex hulls, and \eqref{eqn: euclidean mapping destination} implies $\subdiff{u_i}{\R^n}\subset \ch(\targetpiece[i])$. Thus we can apply Corollary \ref{thm: explicit function theorem} to obtain the function $h$ defined on $\R^{n-1}$ along with all claimed properties above; the connectedness from continuity of $h$.

 
 Now assume $\source$ is convex and $\targetpiece[1]$ is connected. Let $d(x):=d(x, \source)^2$ which is finite and convex on $\R^n$, and define $\tilde{u}:=u+d$. An easy calculation gives
\begin{align*}
 \subdiff{d}{x}=
\begin{cases}
 \{0\},&x\in \source,\\
 \dist(x, \source)\frac{x-\pi_{\source}(x)}{\norm{x-\pi_{\source}(x)}},&x\not\in \source,
\end{cases}
\end{align*}
where $\pi_{\source}(x)$ is the (unique) closest point projection of $x$ onto $\source$. Thus we see by \cite[Theorem 23.8]{Rockafellar70} that 
\begin{align}
\subdiff{\tilde{u}}{x}=\subdiff{u}{x},\quad \forall\;x\in \source.\label{eqn: tilde and original subdifferentials equal}
\end{align}

Next we will show that $\subdiff{\tilde{u}^*}{\xbar}\subset \source$ for every $\xbar\in \targetpiece[1]$ (this is a nontrivial claim for $\xbar\in \targetpiece[1]^\partial$). By \cite[Theorem 16.4]{Rockafellar70} we have
\begin{align}\label{eqn: u tilde transform}
 \tilde{u}^*(\xbar)=\inf_{\ybar\in \R^n}(u^*(\xbar-\ybar)+d^*(\ybar)),
\end{align}
we will calculate $d^*(\ybar)$. Let us write $h(\ybar):=\sup_{x\in\source}\euclidean{x}{\ybar}$ for the \emph{support function} of $\source$, since $\source$ is compact, for each $\ybar\in \R^n$ there exists $z(\ybar)\in\source$ such that $h(\ybar)=\euclidean{z(\ybar)}{\ybar}$. Clearly $d^*(0)=0$, so assume $\ybar\neq 0$. Then by definition,
\begin{align*}
 d^*(\ybar)&=\sup_{x\in \R^n}(\euclidean{x}{\ybar}-\dist(x, \source)^2)=\sup_{\{x\in \R^n\mid \euclidean{x}{\ybar}>\euclidean{z(\ybar)}{\ybar}\}}(\euclidean{x}{\ybar}-\dist(x, \source)^2).
\end{align*}
Fix any $x$ such that $\euclidean{x}{\ybar}>\euclidean{z(\ybar)}{\ybar}$, and an arbitrary $y\in\source$, then for some $\lambda\in [0, 1)$ we have $x_\lambda:=(1-\lambda)y+\lambda x$ satisfies $\euclidean{x_\lambda}{\ybar}=\euclidean{z(\ybar)}{\ybar}$. Then we calculate
\begin{align*}
 \norm{x-y}&\geq \norm{x-x_\lambda}\geq \euclidean{x-x_\lambda}{\frac{\ybar}{\norm{\ybar}}}=\euclidean{x-z(\ybar)}{\frac{\ybar}{\norm{\ybar}}},
\end{align*}
hence taking an infimum over $y\in \source$,
\begin{align*}
 \euclidean{x}{\ybar}-\dist(x, \source)^2&
 \geq h(\ybar)+\euclidean{x-z(\ybar)}{\ybar}-\frac{\euclidean{x-z(\ybar)}{\ybar}^2}{\norm{\ybar}^2}.
\end{align*}
This last quantity can be seen to be maximized over $\euclidean{x}{\ybar}>\euclidean{z(\ybar)}{\ybar}$ when $\euclidean{x-z(\ybar)}{\ybar}=\frac{\norm{\ybar}^2}{2}$, yielding
\begin{align*}
 d^*(\ybar)=h(\ybar)+\frac{\norm{\ybar}^2}{2}-\frac{\norm{\ybar}^2}{4}=h(\ybar)+\frac{\norm{\ybar}^2}{4}.
\end{align*}
By choosing $\ybar=0$ in \eqref{eqn: u tilde transform}, for any $\xbar\in \R^n$ we clearly have
\begin{align*}
 \tilde{u}^*(\xbar)\leq u^*(\xbar).
\end{align*}
On the other hand, suppose $\xbar_0\in \targetpiece[1]^{\interior}$. By \cite[Theorem 2.12]{Villani03} $u^*$ is an optimal potential transporting $\targetmeas$ to $\sourcemeas$, then by \cite[Theorem 10.28]{Villani09} and convexity of $\source$, we have that $\subdiff{u^*}{\xbar_0}\in \source$, let $x_0\in\subdiff{u^*}{\xbar_0}$. Then for any $\ybar\in \R^n$,
\begin{align*}
 u^*(\xbar_0-\ybar)+h(\ybar)+\frac{\norm{\ybar}^2}{4}\geq  u^*(\xbar_0)+\euclidean{\xbar_0-\ybar-\xbar_0}{x_0}+\euclidean{\ybar}{x_0}=u^*(x_0),
\end{align*}
thus taking an infimum over $\ybar\in \R^n$ and recalling \eqref{eqn: u tilde transform} gives $\tilde{u}^*\geq u^*$ on $\targetpiece[1]^{\interior}$. 
Since the Legendre transform of a convex function is always closed, we then have $\tilde{u}^*\equiv u^*$ on all of $\targetpiece[1]=\targetpiece[1]^{\cl}$. Now let $\xbar_0\in\targetpiece[1]$ and suppose $x_0\in \subdiff{\tilde{u}}{\xbar_0}$. Then for any $\xbar$, $\ybar\in \R^n$, again using \eqref{eqn: u tilde transform},
\begin{align*}
 u^*(\xbar-\ybar)+h(\ybar)+\frac{\norm{\ybar}^2}{4}&\geq \tilde{u}^*(\xbar)\geq \tilde{u}^*(\xbar_0)+\euclidean{\xbar-\xbar_0}{x_0}\\
 &=u^*(\xbar_0)+\euclidean{\xbar-\xbar_0}{x_0}.
\end{align*}
We can let $\ybar$ vary over $\R^n\setminus \{0\}$ while setting $\xbar=\ybar+\xbar_0$ in the equation above, then dividing through by $\norm{\ybar}$ we find
\begin{align*}
 \sup_{x\in \source}\euclidean{x}{\frac{\ybar}{\norm{\ybar}}}+\frac{\norm{\ybar}}{4}\geq \euclidean{x_0}{\frac{\ybar}{\norm{\ybar}}},
\end{align*}
taking $\ybar\to 0$ radially gives
\begin{align*}
 \sup_{x\in \source}\euclidean{x}{\omega}\geq \euclidean{x_0}{\omega},\quad \forall \omega\in \S^{n-1},
\end{align*}
hence we must have $x_0\in \source$ as claimed.

 We now claim that
\begin{align}\label{eqn: support intersect superlevelset}
 \subdiff{\tilde{u}^*}{\targetpiece[1]}=\source\cap (C_1\cup \sing[]),
\end{align}
then the proof will be complete by applying Lemma \ref{lem: connectivity of c-subdifferential}. 
Suppose $x_0\in \source\cap (C_1\cup \sing[])$. Recall by \eqref{eqn: tilde and original subdifferentials equal}, $\subdiff{u}{x_0}= \subdiff{\tilde{u}}{x_0}$.
%
 There are two possibilities, either $u_1(x_0)>u_2(x_0)$, or $u_1(x_0)=u_2(x_0)$. In the first case, $\subdiff{u}{x_0}=\subdiff{u_1}{x_0}$, while in the second case, by Lemma \ref{lem: dm} we have $\subdiff{u}{x_0}=\ch(\subdiff{u_1}{x_0}\cup \subdiff{u_2}{x_0})$. In either case, since $\subdiff{u_1}{x_0}\cap \targetpiece[1]\neq \emptyset$ by \eqref{eqn: euclidean mapping destination}, there exists $y_0\in \targetpiece[1]$ such that $y_0\in \subdiff{\tilde{u}}{x_0}$. Hence $x_0\in \subdiff{\tilde{u}^*}{y_0}\subset \subdiff{\tilde{u}^*}{\targetpiece[1]}$. 

Now suppose $x_0\in  \subdiff{\tilde{u}^*}{\targetpiece[1]}$ but $u_2(x_0)>u_1(x_0)$. As we have shown above, $x_0\in \source$. Then by \eqref{eqn: tilde and original subdifferentials equal} combined with Lemma \ref{lem: dm}, $\subdiff{\tilde{u}}{x_0}=\subdiff{u}{x_0}=\subdiff{u_2}{x_0}\subset \ch(\targetpiece[2])$. However this is a contradiction, as this gives $\subdiff{\tilde{u}}{x_0}\cap \targetpiece[1]=\emptyset$. This concludes the proof of \eqref{eqn: support intersect superlevelset}.
%
%
%
%
\end{proof}

We can also obtain some structure in the case where $\target$ consists of more than two regions
separated by hyperplanes. Before we state the results, some setup.

Again, we restrict the discussion to the bilinear cost
$c(x, \xbar)=-\euclidean{x}{\xbar}$ on $\R^n\times \R^n$,
while $\sourcemeas$ and $\targetmeas$ are absolutely continuous probability measures with bounded supports. 
We'll assume $\spt \nu = \cup_{i \in I} \targetpiece[i]$ is a decomposition into finitely many compact disjoint sets;
i.e.  henceforth we assume that $I$ is \emph{finite}.  
Then if $u$ is an optimal potential transporting $\sourcemeas$ to $\targetmeas$, 
by Lemma \ref{lem: u is a maximum of potentials} there exist convex functions $u_i$, $i\in I$ on $\R^n$ such that 
\begin{equation}\label{eq: u is a maximum of potentials}
 u=\sup_{i\in I}u_i \quad \mbox{\rm with}\quad \nabla u_i(x)\in \targetpiece[i],\ \forall\;x\in \Dom(\nabla u).
\end{equation}

If some $\targetpiece[i]$ is strictly convex, $\source$ is convex, and the densities of $\sourcemeas$ and $\targetmeas$ are bounded away from zero and infinity on their supports, by Proposition \ref{prop: u is a maximum of C^1 potentials} we have $u_i\in C^1(\R^n)$.  
 We'll often require that each
 $\targetpiece[i]$ can be strongly separated from each $\targetpiece[j]$ by a hyperplane,
so that their convex hulls are disjoint:
on is that the sets $\ch(\targetpiece[i])$ are mutually disjoint, hence
\begin{align}\label{eqn: subdifferentials disjoint}
 \subdiff{u_i}{\R^n}\subset \ch(\targetpiece[i])\text{ are mutually disjoint}.
\end{align}

We begin with two corollaries of Theorem \ref{thm: explicit function theorem} (the sets $\sing[I']$ and $\sing[I']^\uparrow$ below for a collection of indices $I'$ are defined by \eqref{eqn: Sigma without up} and \eqref{eqn: Sigma up} respectively):

\begin{cor} [DC rectifiability of $\Sigma_{ij}$]\label{cor:pairwise DC}
If $\overline \Omega_i$ and $\overline \Omega_j$ can be strongly separated by a hyperplane $\Pi$
 for some $i \ne j$ in Definition \ref{defin: coincidence sets},
then $\sing[ij] := \sing[\{i,j\}]$ is a globally Lipschitz DC graph over $\Pi$.
\end{cor}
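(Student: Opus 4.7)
The plan is to apply the explicit function theorem (Corollary \ref{cor: explicit function theorem}) directly to the pair of convex functions $u_i, u_j$ from Lemma \ref{lem: u is a maximum of potentials}, extended to all of $\R^n$ via the defining formula \eqref{eqn: piecewise c-transform}.

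First I would record two preliminary facts. Since $\targetpiece[i]$ and $\targetpiece[j]$ are compact and $u^c$ is bounded on them (see Remark \ref{rmk: no compactness needed} and the formula \eqref{eqn: piecewise c-transform} specialized to $c(x,\bar x)=-\euclidean{x}{\bar x}$), the functions $u_k(x)= \sup_{\bar x \in \targetpiece[k]}[\euclidean{x}{\bar x}-u^c(\bar x)]$, $k=i,j$, are convex and finite on all of $\R^n$, differentiable almost everywhere, with $\nabla u_k(x) \in \targetpiece[k]$ wherever defined. Since the subdifferential of a finite convex function is contained in the closed convex hull of the image of its gradient, this yields $\subdiff{u_k}{\R^n}\subset \ch(\targetpiece[k])$ for $k=i,j$. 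Secondly, strong separation of $\targetpiece[i]$ and $\targetpiece[j]$ by $\Pi$ with some spacing $d_0>0$ immediately implies that $\Pi$ strongly separates the compact convex hulls $\ch(\targetpiece[i])$ and $\ch(\targetpiece[j])$ with the same spacing $d_0$, since closed half-spaces are convex.

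Next I would invoke Corollary \ref{cor: explicit function theorem} with the choices $u_+:=u_i$, $u_-:=u_j$, $\overline\Lambda_+:=\ch(\targetpiece[i])$, $\overline\Lambda_-:=\ch(\targetpiece[j])$, and $\Lambda:=\R^n$. All hypotheses of Theorem \ref{thm: explicit function theorem} are met, and after a rotation aligning $\Pi$ with the coordinate hyperplane $\{x^n=a_0\}$ we have $\Lambda'=\pi_{n-1}(\R^n)=\R^{n-1}\cong \Pi$. The corollary then produces a DC function $h:\R^{n-1}\to \R$ whose graph is precisely $\{x \in \R^n \mid u_i(x)=u_j(x)\}$, and the estimate \eqref{eqn: lipschitz bound} shows $h$ is globally Lipschitz with constant $\tan\Theta$ controlled by the separation geometry.

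Finally, intersecting with $\outerdom$ gives $\sing[ij]=\outerdom\cap \{u_i=u_j\}$, which is the portion of the graph of $h$ lying over $\pi_{n-1}(\sing[ij])\subset \Pi$; hence $\sing[ij]$ is the graph of the restriction of a globally Lipschitz DC function on $\Pi$, as claimed. I expect no genuine obstacle: the explicit function theorem of Section \ref{section: explicit function theorem} was tailored for exactly this kind of application, and the only real analytic input beyond the two preliminary facts is the Lipschitz estimate \eqref{eqn: lipschitz bound} already packaged in Theorem \ref{thm: explicit function theorem}. The main work is bookkeeping: matching the conventions of Lemma \ref{lem: u is a maximum of potentials} to those of Corollary \ref{cor: explicit function theorem}.
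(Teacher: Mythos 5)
Your proposal is correct and follows essentially the same route as the paper: the paper's proof likewise notes $\partial u_i(\R^n)\subset\ch(\targetpiece[i])$ and $\partial u_j(\R^n)\subset\ch(\targetpiece[j])$ are strongly separated by $\Pi$ and then cites Theorem \ref{thm: explicit function theorem} (equivalently its Corollary \ref{cor: explicit function theorem}) with $\Lambda=\R^n$. Your additional bookkeeping (convex hulls inherit the separation with the same spacing, the Lipschitz bound \eqref{eqn: lipschitz bound}, and intersecting with $\outerdom$ to get $\sing[ij]$) just fills in details the paper leaves implicit.
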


\begin{proof}
The convex hull of $\overline \Omega_i$ contains $\p u_i(\R^n)$ and is strongly separated from $\p u_j(\R^n) \subset \ch(\overline \Omega_j)$ by $\Pi$. The claim therefore follows from 
Theorem \ref{thm: explicit function theorem}.
\end{proof}

For the quadratic cost,  this result allows us to deduce a variant of
Proposition~\ref{prop: u is a maximum of C^1 potentials} 
which requires neither convexity of $\spt \mu$ nor {\em strict} convexity of $\targetpiece[1]$:

\begin{cor}[Continuous optimal maps to convex target pieces]
\label{cor: continuous maps to convex target pieces}
Fix $c(x,\bar x)=-\euclidean x{\bar x}$ and absolutely continuous probability measures $\mu$ and $\nu$ on $\R^n$ whose densities are bounded away from zero and infinity on their (compact) supports.
Let 
$u= \max u_i$ be from Lemma \ref{lem: u is a maximum of potentials}.
Assume $\targetpiece[1]$ is convex,  and disjoint from $\ch(\targetpiece[i])$ for each $i>1$
such that $\sing[i]^\uparrow$ intersects  $\Omega_1 := (\source) \cap \sing[1]^\uparrow$.
If, in addition $(\source)^\partial \cap \sing[1]^\uparrow$
has zero volume, then $Du_1 \in C^{\alpha}_{loc}(\Omega_1^{\interior})$ and is injective on $\Omega_1^{\interior}$.
\end{cor}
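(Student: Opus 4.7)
The plan is to realize $u_1$ as the Brenier potential of an auxiliary quadratic-cost transport problem --- from $\mu|_{\Omega_1}$ to $\nu|_{\targetpiece[1]}$ --- and then to invoke Caffarelli's interior regularity theorem \cite{Caffarelli92}, whose hypotheses we arrange to meet: the auxiliary target $\targetpiece[1]$ is convex by assumption, and both restricted measures inherit densities bounded away from $0$ and $\infty$ on their (compact) supports from the given bounds on $\mu$ and $\nu$.

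The only real content lies in identifying this auxiliary problem, i.e., in establishing that the Brenier map $T := \nabla u$ from Theorem \ref{thm: Brenier} satisfies $T^{-1}(\targetpiece[1]) = \Omega_1$ modulo $\mu$-null sets. First I would verify the forward inclusion. Fix $x \in \Dom(\nabla u)$ with $\nabla u(x) \in \targetpiece[1]$. Since $\subdiff{u}{x} = \{\nabla u(x)\}$ is a singleton, Lemma \ref{lem: dm} --- applied to the finite collection $\{u_i\}_{i \in I}$ --- forces $\subdiff{u_i}{x} = \{\nabla u(x)\}$ for every $i \in I(x) := \{j \in I : u_j(x) = u(x)\}$, so each such $u_i$ is differentiable at $x$ with $\nabla u_i(x) = \nabla u(x)$. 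Lemma \ref{lem: u is a maximum of potentials} places $\nabla u_i(x) \in \targetpiece[i]$, and the pairwise disjointness of the target components $\{\targetpiece[j]\}_{j \in I}$ then forces $I(x) = \{1\}$, so $u_1(x) = u(x)$, i.e., $x \in \sing[1]^\uparrow$. Since $\mu$ is concentrated on $\source$, the volume hypothesis on $(\source)^\bdry \cap \sing[1]^\uparrow$ permits the conclusion $x \in \source \cap \sing[1]^\uparrow = \Omega_1$ up to a $\mu$-null set. The reverse containment is immediate on $\Omega_1 \cap \Dom(\nabla u)$: here $u = u_1$ locally, so $\subdiff{u_1}{x} \subset \subdiff{u}{x} = \{\nabla u(x)\}$ yields differentiability of $u_1$ at $x$ with $\nabla u_1(x) = \nabla u(x) \in \targetpiece[1]$ (by Lemma \ref{lem: u is a maximum of potentials}).

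Granted this identification, $T_\#(\mu|_{\Omega_1}) = \nu|_{\targetpiece[1]}$, and the convex function $u_1$ --- whose gradient agrees with $T$ on $\Omega_1$ --- is the Brenier potential of the auxiliary transport. Since the target support $\targetpiece[1]$ is convex and both $\mu|_{\Omega_1}$ and $\nu|_{\targetpiece[1]}$ have densities bounded between positive constants on their supports, Caffarelli's regularity theorem \cite{Caffarelli92} delivers strict convexity of $u_1$ --- hence injectivity of $\nabla u_1$ --- together with local $C^{\alpha}$ continuity of $\nabla u_1$ on the interior of the source support $\overline{\Omega_1}$, which contains $\Omega_1^{\interior}$.

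I expect the only delicate step to be the set-theoretic identification $T^{-1}(\targetpiece[1]) = \Omega_1$ modulo $\mu$-null sets; once this is in hand, the remainder is a direct appeal to classical Brenier--Caffarelli theory. The crux of that identification is the joint use of Lemma \ref{lem: dm} and pairwise disjointness of the components of $\spt \nu$ to rule out multiplicity exceeding one at preimage points of $\targetpiece[1]$, while the boundary-volume hypothesis is what lets us discard a potential pathology along $(\source)^\bdry$.
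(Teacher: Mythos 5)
Your strategy is the same as the paper's: realize $u_1$ as the Brenier potential of the restricted problem from $\mu|_{\Omega_1}$ to $\nu|_{\targetpiece[1]}$ and invoke Caffarelli \cite{Caffarelli92}. The identification $T^{-1}(\targetpiece[1])=\Omega_1$ modulo $\mu$-null sets is essentially right, though two details are off: the claim that ``$u=u_1$ locally'' on $\Omega_1$ is false at points of the tears $\sing[1,j]^\uparrow$ (the containment $\subdiff{u_1}{x}\subset\subdiff{u}{x}$ you actually need only requires $u(x)=u_1(x)$, via Lemma \ref{lem: dm}, so the a.e.\ conclusion survives); and the boundary-volume hypothesis plays no role in placing a.e.\ $x$ in $\source$ --- that is automatic since $\mu$ is concentrated there --- so you have misidentified where that hypothesis enters.

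The genuine gap is in the passage to Caffarelli's theorem. His interior strict-convexity and $C^{1,\alpha}$ result requires the source measure of the restricted problem to be given by a density bounded away from $0$ and $\infty$ on an \emph{open} set carrying all of its mass, balanced against the convex target; here that open set must be $\Omega_1^{\interior}$, so one must first show $\mu(\Omega_1\setminus\Omega_1^{\interior})=0$, equivalently that $\partial\Omega_1$ is Lebesgue-null. Otherwise $(\nabla u_1)_\#\bigl(\mu|_{\Omega_1^{\interior}}\bigr)\neq\nu|_{\targetpiece[1]}$ and the theorem's hypotheses fail. Your proof never addresses this, and indeed the corollary's two remaining hypotheses exist precisely to supply it: $\partial\Omega_1$ is contained in the union of $(\source)^\partial\cap\sing[1]^\uparrow$ (null by assumption) and the tears $\sing[1,j]^\uparrow$ for those $j$ with $\sing[j]^\uparrow$ meeting $\Omega_1$; for such $j$ the assumed disjointness of the compact convex sets $\targetpiece[1]$ and $\ch(\targetpiece[j])$ yields strong separation by a hyperplane, so Corollary \ref{cor:pairwise DC} exhibits $\sing[1,j]$ as a Lipschitz DC graph, hence of zero volume. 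This step --- which is the actual content of the paper's proof --- is absent from yours; note that you use neither the disjoint-convex-hull hypothesis (only the weaker pairwise disjointness of the $\targetpiece[i]$ themselves) nor, in any essential way, the boundary-volume hypothesis.
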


\begin{proof}
The boundary of $\Omega_1$ is contained in the union of those
 $\sing[1,i]^\uparrow$ intersecting $\source$ and $(\source)^\partial \cap \sing[1]^\uparrow$.
Corollary \ref{cor:pairwise DC} shows the former are DC hypersurfaces,
hence contain zero volume, like the latter.  Caffarelli's results \cite{Caffarelli92} now assert 
$u_1 \in C^{1,\alpha}_{loc}(\Omega_1^{\interior})$ and is strictly convex there.
\end{proof}

In the above corollary, $D u_1$ gives a homeomorphism between the interior of $\Omega_1:= (\source) \cap \sing[1]^\uparrow$ and some open subset $V_1:=Du_1(\Omega_1^{\interior})$ of full volume in $\targetpiece[1]$;  however, the price
we pay for the lack of convexity of $\spt \mu$ is that we can no longer conclude differentiability of $u_1$ up to the boundary of $\Omega_1$ because we cannot preclude the possibility that $u^*$ fails to be strictly convex along a segment in $\targetpiece[1]\setminus V_1$.

The next theorem shows that $\sing[i_1,\ldots,i_k]^{\uparrow}$ is a disjoint union of $\sing[i_1,\ldots,i_k]^\uparrow\cap M_k$ and $\bigcup_{j\in I\setminus\{i_1,\ldots, i_k\}}\sing[i_1,\ldots,i_k, j]^\uparrow$: the first being a
DC submanifold of codimension $k-1$,
the second a finite union of closed sets with Hausdorff dimension at most $n-k$.
For implications of affine independence in a simpler setting,
see the $C^1$ description of higher
codimension tears coming from strictly convex target components in 
Corollary \ref{cor: convexity yields C^1 smoothness of tears}.

\begin{thm}[DC rectifiability of  higher multiplicity tears]\label{thm: higher codimension euclidean}
Fix $c(x,\bar x)=-\euclidean x{\bar x}$ and probability measures $\mu$ and $\nu$ on $\R^n$ with $\mu$ absolutely continuous and 
$\spt \nu = \cup_{i \in I}\targetpiece[i]$ a finite disjoint union of compact sets.
Let $\nu = (Du)_\#\mu$ where $u$ from \eqref{eq: u is a maximum of potentials} is convex. 
Fix a collection of indices $i_1,\ldots, i_k\in I$.
If $\{\ch(\targetpiece[i_1]),\ldots, \ch(\targetpiece[i_k])\}$ is an affinely independent collection, for any $x_0\in\sing[i_1,\ldots,i_k]$ there exists $r_0>0$ such that $B_{r_0}(x_0)\cap \sing[i_1,\ldots,i_k]$ is contained in the image of an open subset of $\R^{n+1-k}$ under a bi-Lipschitz DC mapping.

Suppose in addition that 
the existence of a point $x$ such that $\subdiff{u}{x}\cap \targetpiece[i]\neq \emptyset$ for all of $i=i_1, \ldots, i_k$, and $j$ implies
 \begin{align}
& \{\ch(\targetpiece[i_1]),\ldots, \ch(\targetpiece[i_k]),\ \ch(\targetpiece[j])\}\text{ is an affinely independent collection.}\notag\\
\label{eqn: one more is independent}
 \end{align}
Then
\begin{align}
\sing[i_1,\ldots,i_k]^\uparrow\cap M_k&=\{x\in\R^n\mid u(x)=u_{i_1}(x)=\ldots=u_{i_k}(x)>\max_{j\in I\setminus\{i_1,\ldots, i_k\}}u_j(x)\},\label{eqn: M_k means only touches k}\\
(\sing[i_1,\ldots,i_k]^\uparrow\cap M_k)&\cap \bigcup_{j\in I\setminus\{i_1,\ldots, i_k\}}\sing[i_1,\ldots,i_k, j]^\uparrow=\emptyset,\label{eqn: disjointness of multiplicity sets}\\
(\sing[i_1,\ldots,i_k]^\uparrow\cap M_k)&\cup \bigcup_{j\in I\setminus\{i_1,\ldots, i_k\}}\sing[i_1,\ldots,i_k, j]^\uparrow=\sing[i_1,\ldots,i_k]^{\uparrow}.\label{eqn: closure of multiplicity sets}
\end{align}
Moreover $\sing[i_1,\ldots,i_k]^\uparrow\cap M_k$ is a relatively open subset of $\sing[i_1,\ldots,i_k]^{\uparrow}$.
\end{thm}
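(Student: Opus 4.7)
The plan is to deduce the first assertion by applying the DC constant rank theorem (Theorem \ref{thm: DC constant rank}). After relabelling so that $i_j=j$ for $j\leq k$, the set $\sing[i_1,\ldots,i_k]$ is the zero set of the DC mapping $G:U\to\R^{k-1}$,
\begin{align*}
G(x):=(u_2(x)-u_1(x),\ldots,u_k(x)-u_1(x)),
\end{align*}
defined on a neighborhood $U$ of $x_0$; each $u_l$ is convex by Lemma \ref{lem: u is a maximum of potentials}, so $G$ is DC, and $G(x_0)=0$. Applying Theorem \ref{thm: DC constant rank} with $k$ there replaced by $k-1$ will give the asserted bi-Lipschitz DC parameterization of $B_{r_0}(x_0)\cap\sing[i_1,\ldots,i_k]$ over an open subset of $\R^{n-k+1}$, provided every $A\in J^{C}G(x_0)$ has full rank $k-1$.

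The main technical step is this rank verification. From Definition \ref{def: generalized jacobian}, an element $A\in J^{C}G(x_0)$ is a convex combination $A=\sum_l\lambda_l A^{(l)}$ of limit matrices whose $(j-1)$-th row has the form $p_j^{(l)}-p_1^{(l)}$ with $p_l^{(l)}\in\subdiff{u_l}{x_0}$, coming from a common sequence $x_m^{(l)}\to x_0$ of differentiability for every $u_l$. Collecting terms, the $(j-1)$-th row of $A$ equals $\bar q_j-\bar r$, where $\bar q_j:=\sum_l\lambda_l p_j^{(l)}\in\subdiff{u_j}{x_0}$ and the vector $\bar r:=\sum_l\lambda_l p_1^{(l)}\in\subdiff{u_1}{x_0}$ is identical for every row (because the $\lambda_l$'s factor through $u_1$). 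Using \eqref{eqn: gradient maps into pieces} with convexity of subdifferentials, $\bar r\in\ch(\targetpiece[1])$ and $\bar q_j\in\ch(\targetpiece[j])$; the hypothesized affine independence of $\{\ch(\targetpiece[l])\}_{l=1}^{k}$ then forces $\bar q_2-\bar r,\ldots,\bar q_k-\bar r$ to be linearly independent, so $A$ has rank $k-1$. The subtle point here, and the main obstacle, is that the convex hull in Definition \ref{def: generalized jacobian} is taken over matrices rather than independently over rows, which is exactly what forces the single vector $\bar r$ to be shared across rows and permits the invocation of affine independence.

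Next I would establish \eqref{eqn: M_k means only touches k} by proving both inclusions. For $\subset$, if $x\in\sing[i_1,\ldots,i_k]^\uparrow\cap M_k$ and $u(x)=u_j(x)$ for some $j\not\in\{i_1,\ldots,i_k\}$, then the piecewise representation \eqref{eqn: piecewise c-transform} and compactness of $\targetpiece[j]$ yield a maximizer in $\csubdiff{u}{x}\cap\targetpiece[j]$, contradicting multiplicity $k$. For $\supset$, assume the strict inequalities in \eqref{eqn: M_k means only touches k}; Lemma \ref{lem: dm} and \eqref{eqn: gradient maps into pieces} give
\begin{align*}
\csubdiff{u}{x}\subset\ch\Bigl(\bigcup_{l=1}^{k}\ch(\targetpiece[i_l])\Bigr),
\end{align*}
so any point of $\targetpiece[j]\cap\csubdiff{u}{x}$ with $j\not\in\{i_1,\ldots,i_k\}$ would produce $k+1$ points---one from each convex hull in the collection of \eqref{eqn: one more is independent}---lying on a common $(k-1)$-dimensional affine subspace, contradicting that hypothesis. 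Hence the multiplicity is exactly $k$.

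Finally, \eqref{eqn: disjointness of multiplicity sets} follows directly from \eqref{eqn: M_k means only touches k}, since $x\in\sing[i_1,\ldots,i_k,j]^\uparrow$ forces $u(x)=u_j(x)$. Equation \eqref{eqn: closure of multiplicity sets} follows by dichotomy: every $x\in\sing[i_1,\ldots,i_k]^\uparrow$ either satisfies $u(x)>u_j(x)$ for all $j\not\in\{i_1,\ldots,i_k\}$ (placing $x$ in $\sing[i_1,\ldots,i_k]^\uparrow\cap M_k$ by \eqref{eqn: M_k means only touches k}), or satisfies $u(x)=u_j(x)$ for some such $j$ (placing $x$ in $\sing[i_1,\ldots,i_k,j]^\uparrow$). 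The relative openness of $\sing[i_1,\ldots,i_k]^\uparrow\cap M_k$ inside $\sing[i_1,\ldots,i_k]^\uparrow$ follows from continuity of each $u_l$: if $x_0$ lies in $\sing[i_1,\ldots,i_k]^\uparrow\cap M_k$, the finite collection of strict inequalities $u_{i_1}(x_0)>u_j(x_0)$ for $j\not\in\{i_1,\ldots,i_k\}$ persists in a Euclidean neighborhood.
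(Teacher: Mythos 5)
Your proposal is correct and follows essentially the same route as the paper: the same difference mapping (with $u_1$ rather than $u_k$ as the subtracted potential), the same rank verification of the Clarke Jacobian via affine independence — including the key observation that the subtracted subgradient is shared across rows of any convex combination of limit matrices — followed by the DC constant rank theorem, and the same two-inclusion argument for \eqref{eqn: M_k means only touches k} using Lemma \ref{lem: dm} and hypothesis \eqref{eqn: one more is independent}. The only cosmetic difference is that you detect $\subdiff{u}{x}\cap\targetpiece[j]\neq\emptyset$ via attainment of the supremum in \eqref{eqn: piecewise c-transform} where the paper argues through extreme points of $\subdiff{u_j}{x}$; both are standard and equivalent here.
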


\begin{proof}
Without loss of generality, we may assume $i_1=1,\ldots, i_k=k$.

First assume $\{\ch(\targetpiece[i])\}_{i=1}^k$ is an affinely independent collection and $x_0\in\sing[1,\ldots,k]$. Defining $F: \R^{n+1-k}\times \R^{k-1}\to \R^{k-1}$ by
\begin{align*}
 F(x):=(u_1(x)-u_k(x),\ldots, u_{k-1}(x)-u_k(x)),
\end{align*}
by assumption $F(x_0)=0$, we will now show that every element of $J^CF(x_0)$ has rank $k-1$. Let $M\in J^CF(x_0)$, and  suppose the $i$th row is given by a vector of the form 
\begin{align*}
v_i:= \lim_{m\to\infty}\nabla(u_i-u_k)(x_m)
\end{align*}
with $x_m\to x_0$ and $x_m\in \Dom(\nabla u_i)\cap \Dom(\nabla u_k)$. Then there must exist points $\xbar_i\in \targetpiece[i]$ for $i\in \{1,\ldots, k\}$ such that $v_i=\xbar_i-\xbar_k$, and the assumption of affine independence implies $M$ has rank $k-1$. By Carath{\'e}odory's theorem (\cite[Theorem 17.1] {Rockafellar70} any other $M\in J^CF(x_0)$ can be written as the convex combination of $n+1$ matrices as above, meaning that we have $v_i=\xbar_i-\xbar_k$ this time with $\xbar_i\in \ch(\targetpiece[i])$ for $i\in \{1,\ldots, k\}$, again the hypothesis yields that $M$ has rank $k-1$. Thus we can apply the DC constant rank theorem (Theorem \ref{thm: DC constant rank}) to obtain the first claim.

Now assume condition \eqref{eqn: one more is independent} holds. For brevity, let us notate the set on the right hand side of \eqref{eqn: M_k means only touches k} by $S_k$. Suppose $u(x_0)=u_i(x_0)$ for any fixed index $i\in I$, then by Lemma \ref{lem: dm} we have $\subdiff{u_i}{x_0}\subset \subdiff{u}{x_0}$. Any extremal point of $\subdiff{u_i}{x_0}$ is a limit of points of the form $\nabla u_i(x_m)$ where $x_m\in \Dom(\nabla u_i)$ and $x_m\to x_0$, then since $\nabla u_i(\Dom(\nabla u))\subset \targetpiece[i]$ which is a closed set, we see $\subdiff{u}{x_0}\cap \targetpiece[i]\neq \emptyset$. Thus, we immediately see $\sing[1,\ldots,k]^\uparrow\cap M_k\subset S_k$. On the other hand suppose $x_0\in S_k$, then by definition $x_0\in \sing[1,\ldots,k]^\uparrow$. Suppose by contradiction $x_0\not\in M_k$, then there must exist $j\in I\setminus \{1, \ldots, k\}$ such that $\exists \xbar_0\in\subdiff{u}{x_0}\cap\targetpiece[j]$. Since $\subdiff{u}{x_0}\cap\targetpiece[i]\neq \emptyset$ for $i\in \{1,\ldots, k\}$ by Lemma \ref{lem: dm}, \eqref{eqn: one more is independent} implies the collection 
\begin{align*}
 \{\ch(\targetpiece[1]),\ldots, \ch(\targetpiece[k]),\ \ch(\targetpiece[j])\}
\end{align*}
is affinely independent. However, by Lemma \ref{lem: dm} and the definition of $S_k$, we must have that $\xbar_0$ is contained in the convex hull of $k$ points, one from each of $\{\ch(\targetpiece[1]),\ldots, \ch(\targetpiece[k])\}$ contradicting this affine independence, 
%
proving \eqref{eqn: M_k means only touches k}. The claim \eqref{eqn: disjointness of multiplicity sets} then follows immediately.

Next, by continuity of the $u_i$ and $u$ we immediately see
\begin{align*}
\sing[1,\ldots,k]^{\uparrow}\subset (\sing[1,\ldots,k]^\uparrow\cap M_k)\cup \bigcup_{j \in I\setminus \{1,\ldots, k\}}\sing[1,\ldots,k, j]^\uparrow,
\end{align*}
while by \eqref{eqn: M_k means only touches k} the opposite inclusion holds proving \eqref{eqn: closure of multiplicity sets}.

Finally, suppose $x\in \sing[1,\ldots,k]^\uparrow\cap M_k$. By \eqref{eqn: M_k means only touches k}, there is some open ball $B_r(x)$ on which $\displaystyle\min_{1\leq i\leq k}u_i>\max_{k+1\leq j\leq K}u_j$. Then clearly $B_r(x)\cap \sing[1,\ldots, k]^\uparrow\subset \sing[1,\ldots,k]^\uparrow\cap M_k$, hence $\sing[1,\ldots,k]^\uparrow\cap M_k$ is relatively open in $\sing[1,\ldots, k]^\uparrow$.
\end{proof}

We also mention that under affine independence, there can be at most one tear of multiplicity $n+1$.

\begin{prop}[Uniqueness of maximal multiplicity tears]\label{prop: only one n+1 order singularity}
Let  $c(x, \xbar)=-\euclidean{x}{\xbar}$, and assume $\sourcemeas$, $\targetmeas$ are absolutely continuous probabilities measures on $\R^n$ with bounded supports. Also suppose $\left\{\targetpiece[i]\right\}_{i=1}^{n+1}$ is {\em any} affinely independent collection of path connected subsets of $\R^n$ (which may or may not decompose $\target$).
Then if $\convpot$ is an optimal potential transporting $\sourcemeas$ to $\targetmeas$, it can have at most one
point of multiplicity $n+1$ relative to $\left\{\targetpiece[i]\right\}_{i=1}^{n+1}$.
\end{prop}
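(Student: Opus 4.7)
The plan is to argue by contradiction: suppose $x_0 \neq x_1$ are two points of multiplicity $n+1$ relative to $\{\targetpiece[i]\}_{i=1}^{n+1}$, and produce an $(n-1)$-dimensional affine subspace meeting every $\targetpiece[i]$, contradicting the affine independence hypothesis of Definition~\ref{defin: affine independence of sets}. Pick $\ybar_i^0 \in \subdiff{u}{x_0}\cap \targetpiece[i]$ and $\ybar_i^1 \in \subdiff{u}{x_1}\cap \targetpiece[i]$ for each $1\le i\le n+1$; note that for the quadratic cost the $c$-subdifferential coincides with the convex subdifferential.

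The key input is the monotonicity of $\partial u$: for any $\ybar^0\in\subdiff{u}{x_0}$ and $\ybar^1\in\subdiff{u}{x_1}$ one has $\euclidean{x_1-x_0}{\ybar^1-\ybar^0}\ge 0$. Applied to every pair $(\ybar_i^1,\ybar_j^0)$, this yields
\begin{align*}
\min_{1\le i\le n+1}\euclidean{v}{\ybar_i^1}\ \ge\ \max_{1\le j\le n+1}\euclidean{v}{\ybar_j^0},\qquad v:=x_1-x_0\neq 0.
\end{align*}
Pick any $c\in \R$ sandwiched between these two values, and set $H:=\{\ybar\in\R^n\mid \euclidean{v}{\ybar}=c\}$, an affine hyperplane of dimension $n-1$. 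By construction $\{\ybar_i^0\}\subset\{\euclidean{v}{\cdot}\le c\}$ while $\{\ybar_i^1\}\subset\{\euclidean{v}{\cdot}\ge c\}$.

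Now I exploit path-connectedness: for each $i$ choose a continuous path $\gamma_i\colon[0,1]\to\targetpiece[i]$ with $\gamma_i(0)=\ybar_i^0$ and $\gamma_i(1)=\ybar_i^1$. The real-valued continuous function $t\mapsto \euclidean{v}{\gamma_i(t)}$ takes a value $\le c$ at $t=0$ and $\ge c$ at $t=1$, so by the intermediate value theorem there exists $t_i\in[0,1]$ with $\gamma_i(t_i)\in\targetpiece[i]\cap H$. This furnishes a point of $H$ in each $\targetpiece[i]$, $i=1,\ldots,n+1$, so the $(n-1)$-dimensional affine subspace $H$ meets every member of the collection $\{\targetpiece[i]\}_{i=1}^{n+1}$, directly contradicting affine independence.

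The only potential obstacle is verifying that pairwise monotonicity (not merely the diagonal inequalities $\euclidean{v}{\ybar_i^1-\ybar_i^0}\ge 0$) is available, but this is immediate from the fact that the subdifferential of a convex function is a monotone set-valued map. No regularity of $u$ or of the $\targetpiece[i]$ beyond what is assumed is needed, and the conclusion holds regardless of whether $\{\targetpiece[i]\}_{i=1}^{n+1}$ exhausts $\target$.
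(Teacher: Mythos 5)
Your proof is correct, and it reaches the same contradiction as the paper --- an $(n-1)$-dimensional affine hyperplane meeting every $\targetpiece[i]$, obtained by pushing a path inside each $\targetpiece[i]$ across a hyperplane separating the two subdifferentials and invoking the intermediate value theorem --- but the way you produce the separating hyperplane is genuinely different and more elementary. The paper first argues that $\subdiff{u}{x_0}$ and $\subdiff{u}{x_1}$ must each have affine dimension $n$ (else affine independence already fails), then observes that $u^*$ is nondifferentiable on $\subdiff{u}{x_0}\cap\subdiff{u}{x_1}$, so this intersection has zero Lebesgue measure, hence the interiors are disjoint, and finally invokes the separation theorem \cite[Theorem 11.3]{Rockafellar70} to split $\R^n$ into opposing halfspaces containing the two subdifferentials. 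You bypass all of that: monotonicity of $\partial u$ applied to the two base points gives $\euclidean{v}{\ybar^1}\ge\euclidean{v}{\ybar^0}$ for \emph{all} $\ybar^0\in\subdiff{u}{x_0}$, $\ybar^1\in\subdiff{u}{x_1}$ with $v=x_1-x_0\neq 0$, so a hyperplane with explicit normal $v$ and any sandwiched level $c$ does the job; no full-dimensionality, no measure-zero argument, no abstract separation theorem. What the paper's route buys is a picture of the two subdifferentials as full-dimensional cells of the dual subdivision touching at most along a face; what yours buys is brevity, an explicit normal direction, and weak separation of the entire subdifferentials rather than just the chosen points. One small bookkeeping remark you already address implicitly: multiplicity is defined via the $c$-subdifferential, which for the quadratic cost is $\subdiff{u}{x_j}\cap\outertarget\subset\subdiff{u}{x_j}$, so your chosen points $\ybar_i^0,\ybar_i^1$ indeed lie in the convex subdifferentials and monotonicity applies.
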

\begin{proof}
 Suppose by contradiction there exist two points $x_0\neq y_0$ where $u$ has multiplicity $n+1$, then $\subdiff{\convpot}{x_0}$ and $\subdiff{\convpot}{y_0}$ each must intersect all of the sets $\targetpiece[i]$. First note that $\subdiff{\convpot}{x_0}$, $\subdiff{\convpot}{y_0}$ must have affine dimension $n$ (hence nonempty interior), otherwise there would be an $n-1$ dimensional affine plane intersecting all $\targetpiece[i]$. Now the convex function $\convpotdual$ is seen to be nondifferentiable on $\subdiff{\convpot}{x_0}\cap\subdiff{\convpot}{y_0}$, hence this intersection must have zero Lebesgue measure. In particular, the interiors of $\subdiff{\convpot}{x_0}$ and $\subdiff{\convpot}{y_0}$ are disjoint, and by~\cite[Theorem 11.3]{Rockafellar70}, $\R^n$ is divided into two closed, opposing halfspaces $H_+$ and $H_-$ with $\subdiff{\convpot}{x_0}\subset H_+$, $\subdiff{\convpot}{y_0}\subset H_-$.

Let us take $\xbar_i\in \subdiff{\convpot}{x_0}\cap\targetpiece[i]$ and $\ybar_i\in \subdiff{\convpot}{y_0}\cap\targetpiece[i]$; we see that $\xbar_i\in H_+$ while $\ybar_i\in H_-$ for each $1\leq i\leq n+1$. Now each $\targetpiece[i]$ is path connected, thus for each $i$ there exists some continuous path $\gamma_i(t)$ with $\gamma_i(0)=\xbar_i$ and $\gamma_i(1)=\ybar_i$, which remains inside $\targetpiece[i]$. Clearly there must exist some time $t_i\in[0, 1]$ at which $\gamma_i$ intersects the hyperplane $H_+\cap H_-$ for each $1\leq i \leq n+1$. However, this would imply that $H_+\cap H_-$ is an $n-1$ dimensional affine plane intersecting all of the sets $\targetpiece[i]$, a contradiction.
\end{proof}


\section{$C^{1,\alpha}$ smoothness of optimal map discontinuities: quadratic cost}
\label{section: smoother quadratic}

In a previous section,  affine independence of the target pieces was identified as the geometric
manifestation of the implicit function theorem hypothesis which guarantees DC smoothness of the corresponding tears. 
This section is devoted to improving this smoothness 
to $C^{1,\alpha}_{loc}$ on $(\spt \mu)^{\interior}$.
In order to establish this goal, we begin by recalling the required machinery from \cite{CaffarelliMcCann10}. Again, we will be working in the setting of $c(x, \xbar)=-\euclidean{x}{\xbar}$ on $\R^n\times \R^n$.

\begin{defin}[Affine doubling]\label{def: doubling}
 Suppose $\mu$ is a Borel measure on $\R^n$ and $x\in X\subset \R^n$. An open neighborhood $\nbhd[x]$ of $x$ is said to be a \emph{doubling neighborhood of $\mu$ with respect to $X$} if there exists a constant $\delta>0$ (called the \emph{doubling constant of $\mu$ on $\nbhd[x]$}) such that for any convex set $Z\subset \nbhd[x]$ whose (Lebesgue) barycenter is in $X$, 
\begin{align*}
 \mu(\frac{1}{2}Z)\geq \delta^2\mu(Z),
\end{align*}
here the dilation of $Z$ is with respect to its barycenter.
\end{defin}
\begin{defin}[Centered sections]\label{def: centered sections}
 If $\phi: \R^n\to \R\cup \{\infty\}$ is a convex function with $\subdiff{\phi}{\R^n}^{\interior}\neq \emptyset$, $\epsilon>0$, and $x_0\in \R^n$, the \emph{centered section} of $\phi$ at $x_0$ of height $\epsilon$ is defined by 
\begin{align*}
 Z_\epsilon^\phi(x_0):=\{x\in \R^n\mid \phi(x)<\epsilon +\phi(x_0)+\euclidean{v_\epsilon}{x-x_0}\}
\end{align*}
where $v_\epsilon$ is chosen so that $x_0$ is the barycenter of $Z_\epsilon^\phi(x_0)$, which is bounded.
\end{defin}
It is known that such a $v_\epsilon$ exists, and is unique (see \cite[Theorems A.7 and A.8]{CaffarelliMcCann10}).
\begin{defin}[$p$-uniform convexity]\label{def: p uniform convexity}
 If $p\geq 2$, a convex function $u$ is said to be \emph{$p$-uniform convex} on a set $\Omega$ if there is a finite constant $k>0$ such that for any $x_1$, $x_2\in \Omega$ and $\xbar_1\in \subdiff{u}{x_1}$, $\xbar_2\in \subdiff{u}{x_2}$,
\begin{align*}
 \euclidean{\xbar_2-\xbar_1}{x_2-x_1}\geq k^{1-p}\norm{x_2-x_1}^p.
\end{align*}
\end{defin}
\begin{rmk}\label{rem: p uniform convexity}
 This definition differs from the \emph{a priori} weaker \cite[Definition 7.9]{CaffarelliMcCann10}, but is equivalent. Indeed, the above inequality still holds if $\xbar_1$, $\xbar_2$ are points that can be written as limits of the form $\lim_{k\to\infty}\nabla u(x_{k, i})$ where $x_{k, i}\in \Dom(\nabla u)\cap\Omega$ and $x_{k, i}\to x_i$ as $k\to\infty$. Then since any $\xbar_i\in \subdiff{u}{x_i}$ can be written as a convex combination of such points, the formulation in Definition \ref{def: p uniform convexity} holds.
\end{rmk}

With these definitions in hand, we can state and prove the following refinement in the case when one of the pieces $\targetpiece[i]$ is strictly convex. Corollary \ref{cor: exceptional sets small} below 
will give conditions under which the exceptional sets $E_i$ of the theorem below 
lie in the boundary of $\source$.

\begin{thm}[H\"older continuity of optimal maps to closed convex target pieces]\label{thm: improvement when strictly convex}
Fix $c(x,\bar x)=-\langle x,\bar x\rangle$ and probability measures $\sourcemeas$, $\targetmeas$ with densities bounded away from zero and infinity on their supports in $\R^n$.  Let $\source$ be convex and $\target = \cup_{i\in I} \bar \Omega_i$ a finite disjoint union of closed sets
strongly separated by hyperplanes pairwise \eqref{eqn: subdifferentials disjoint}.
If $\targetpiece[i]$ is strictly convex, then
 \begin{align*}
 u_i\in C^{1, \alpha}_{loc}((\sing[i]^\uparrow\cap \source)\setminus E_i)
 \end{align*}
  for some $\alpha\in (0, 1)$ (which depends only $n$, and the bounds of the densities of $\sourcemeas$ and $\targetmeas$ away from zero and infinity on their supports) where
\begin{align}\label{eqn: exceptional points}
E_i  := & \{ x \in (\sing[i]^\uparrow\cap \source)^\partial \mid  \nabla u_i(x) \in N_{\source}(x) + \ch \left(\subdiff{u}{x} \cap (\target \setminus \bar \Omega_i)\right)  \}  
\end{align}
and  $N_{\source}(x) := \{ v \in \R^n \mid \langle v, y-x \rangle \le 0 \mbox{\rm\ for all}\ y \in \source \}$
denotes the outer normal cone to the convex set $\source$ at $x$. 
\end{thm}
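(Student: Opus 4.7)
The plan is to apply the boundary regularity machinery of \cite{CaffarelliMcCann10} to the potential $u_i$ itself, treating it as the optimal transport potential from some affinely doubling source measure to $\nu|_{\targetpiece[i]}$, whose target $\targetpiece[i]$ is strictly convex.

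As preliminaries, Proposition~\ref{prop: u is a maximum of C^1 potentials} gives $u_i \in C^1(\R^n)$ with $\subdiff{u_i}{x} \subset \ch(\targetpiece[i])$ for all $x$, and its Legendre dual $u_i^*$ is strictly convex on $\targetpiece[i]^{\interior}$. Set $S_i := \sing[i]^\uparrow \cap \source$. Inside $S_i^{\interior}$ we have $u \equiv u_i$ locally, and $\nabla u$ pushes $\mu|_{S_i^{\interior}}$ forward to the corresponding restriction of $\nu|_{\targetpiece[i]}$; together with convexity of $\source$ and the density bounds in hypothesis (I), Caffarelli's theorem \cite{Caffarelli92} then yields $u_i \in C^{1,\alpha}_{loc}(S_i^{\interior})$.

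The substance lies at boundary points $x_0 \in S_i^\partial \setminus E_i$. Here I would work with centered sections $Z^{u_i}_\epsilon(x_\epsilon)$ (Definition~\ref{def: centered sections}) based at points $x_\epsilon \in S_i^{\interior}$ converging to $x_0$ and argue that the condition $\nabla u_i(x_0) \notin N_{\source}(x_0) + \ch(\subdiff{u}{x_0} \cap (\target \setminus \targetpiece[i]))$ provides, by polar duality and the hyperplane separation theorem, a unit vector $v$ and angle $\theta > 0$ such that, for all sufficiently small $\epsilon > 0$, the section $Z^{u_i}_\epsilon(x_\epsilon)$ lies in the cone $\{x_0 + w : \langle w, v\rangle \geq \theta |w|\}$ and hence entirely inside $S_i$. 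The two summands in the forbidden Minkowski sum exactly encode the two obstructions: $N_{\source}(x_0)$ prevents leaving $\source$, while $\ch(\subdiff{u}{x_0} \cap (\target \setminus \targetpiece[i]))$ prevents $u_i$ falling below one of the neighboring $u_j$'s and thereby leaving $\sing[i]^\uparrow$ through a tear.

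Once this containment is in place, the measure transported by $\nabla u_i$ coincides with $\mu$ on each such section, so it inherits a uniform affine doubling constant (Definition~\ref{def: doubling}) from the density bounds of hypothesis (I). Combined with the strict convexity of $\targetpiece[i]$ and $p$-uniform convexity of $u_i^*$ near $\nabla u_i(x_0)$ descending from strict $c^*$-convexity of $u^c$ on $\targetpiece[i]^{\interior}$, the boundary regularity theorem of \cite[Section 7]{CaffarelliMcCann10} then yields $u_i \in C^{1,\alpha}$ in an ambient neighborhood of $x_0$ with $\alpha \in (0,1)$ depending only on $n$ and the density bounds. The hard part will be executing the geometric containment step uniformly at points $x_0$ which lie simultaneously on $\source^\partial$ and on a tear $\sing[{i,j}]^\uparrow$: both $N_{\source}(x_0)$ and the contact set $\subdiff{u}{x_0} \cap (\target \setminus \targetpiece[i])$ then contribute nontrivially, and one must verify the separating direction $v$ can be chosen uniformly as $x_\epsilon \to x_0$ so that the containment is quantitatively preserved and the doubling constant depends only on the stated data.
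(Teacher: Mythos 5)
Your overall strategy --- interior regularity from Caffarelli plus the affine-doubling/section machinery of \cite{CaffarelliMcCann10} at boundary points, with the exclusion from $E_i$ supplying the needed localization of sections --- is the right one, and matches the paper's in outline. But the crucial step is left as a plan, and the specific localization you propose is aimed at the wrong object. You propose to show that the \emph{source-space} centered sections $Z^{u_i}_\epsilon(x_\epsilon)$ lie in a spatial cone at $x_0$ and hence inside $S_i=\sing[i]^\uparrow\cap\source$. The hypothesis $\nabla u_i(x_0)\notin N_{\source}(x_0)+\ch\left(\subdiff{u}{x_0}\cap(\target\setminus\targetpiece[i])\right)$ is, however, a statement in \emph{gradient} space: by hyperplane separation it controls the geometry of the subdifferential of the restricted potential $\tilde u:=u+\chi_{\source}$ at $x_0$, namely the Minkowski-sum decomposition $\subdiff{\tilde u}{x_0}=\ch\bigl(\{\nabla u_i(x_0)\}\cup\bigcup_{j}\subdiff{u_j}{x_0}\bigr)+N_{\source}(x_0)$, not the shape of sublevel sets of $u_i$ in the source. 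It is not clear how to convert it into the uniform cone containment you assert, and you yourself flag this as ``the hard part.'' Moreover, even granting your containment, the doubling of the target-side Monge--Amp\`ere measure with respect to $\targetpiece[i]$ only holds on balls disjoint from the other components $\targetpiece[j]$, so one still needs to localize the sections of the \emph{dual} potential near $\nabla u_i(x_0)$; your proposal does not address this.

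What the paper actually does at this step (following \cite[Lemma 7.11]{CaffarelliMcCann10}) is to localize the centered sections $Z^{\tilde u^*}_\epsilon(\bar x)$ of the Legendre dual $\tilde u^*$ at target points $\bar x=\nabla u_i(x)$, arguing by contradiction: a failure of localization along $\bar x_j\to\bar x_\infty$ produces a nontrivial segment in $\subdiff{\tilde u}{x_\infty}$ centered at $\bar x_\infty=\nabla u_i(x_\infty)$. The $E_i$-exclusion lets one strongly separate $\bar x_\infty$ from $N_{\source}(x_\infty)+\ch\left(\subdiff{u}{x_\infty}\cap(\target\setminus\targetpiece[i])\right)$ by a hyperplane; any such centered segment must then be parallel to that hyperplane, which by the Minkowski-sum structure forces the closed cone $N_{\source}(x_\infty)$ to contain a complete line --- impossible since $\source$ has nonempty interior. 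Only after this dual localization do the doubling estimates and \cite[Theorem 7.13, Corollary 7.14]{CaffarelliMcCann10} apply. So the gap is genuine: the decisive separation/contradiction argument is missing from your write-up, and the containment you would need to prove in its place is not the one that the definition \eqref{eqn: exceptional points} of $E_i$ naturally furnishes.
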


\begin{proof}
We may assume $i=1$.  Proposition \ref{prop: u is a maximum of C^1 potentials} 
asserts that $u_1\in C^1(\R^n)$, and Corollary \ref{cor: interior homeo/diffeo} implies
$\nabla u$ gives a homeomorphism between 
$(\target \cap \sing[1]^\uparrow)^{\interior}$ 
and $\bar \Omega_i^{\interior}$ which extends continuously to the boundary.
The purpose of this theorem is to establish a H\"older estimate away from the exceptional set $E_1$.

Let us write for any Borel $A\subset \R^n$, $M_1(A):=\Leb{\nabla u_1(A)}$, the \emph{Monge-Amp{\`e}re measure} of $u_1$ (here $\Leb{\cdot}$ denotes the Lebesgue measure). Since $\nabla u_1(\R^n)\subset \targetpiece[1]$ which is convex, by \cite[Lemma 2]{Caffarelli92} we have for some constant $C>0$ depending only the bounds of the densities of $\sourcemeas$ and $\targetmeas$ away from zero and infinity on their supports, for any Borel $A\subset \R^n$

\begin{align}\label{eqn: u_1 is aleksandrov solution}
 C^{-1}\Leb{A\cap\sing[1]^\uparrow\cap \source}\leq M_1(A)\leq C\Leb{A\cap \sing[1]^\uparrow\cap \source}.
\end{align}
Suppose $x_0\in (\source)^\partial\cap (\sing[1]^\uparrow)^{\interior}$. Then for some $r_0>0$ small, the intersection $B_{r_0}(x_0)\cap \source\cap \sing[1]^\uparrow$ is convex and any convex $Z\subset B_{r_0}(x_0)\cap \source\cap \sing[1]^\uparrow$ satisfies \eqref{eqn: u_1 is aleksandrov solution}. Thus the proof of \cite[Lemma 7.5]{CaffarelliMcCann10} applies and we see $B_{r_0}(x_0)$ is a doubling neighborhood of $M_1$ with respect to $\source\cap \sing[1]^\uparrow$, with doubling constant $\delta_0$ depending only on $\sourcemeas$, $\targetmeas$, and $n$.

Next define the convex function $\tilde{u}$ by
\begin{align*}
 \tilde{u}(x)=
\begin{cases}
 u(x),&x\in \source,\\
 \infty, &\text{else},
\end{cases}
\end{align*}
then its Legendre transform $\tilde{u}^*$ is an optimal potential transporting $\targetmeas$ to $\sourcemeas$ which is finite on all of $\R^n$ with $\subdiff{\tilde{u}^*}{\R^n}\subset \source$ by convexity of $\source$. Since the restriction of $\tilde{u}^*$ will be an optimal potential transporting the restriction of $\targetmeas$ to $\targetpiece[1]$ to the restriction of $\sourcemeas$ to $\sing[1]^\uparrow\cap \source$ and $\targetpiece[1]$ is connected, by subtracting a constant we can assume $\tilde{u}^*=u^*$ on $\targetpiece[1]$. Writing for any Borel $A\subset \R^n$, $\tilde{M}(A):=\Leb{\subdiff{\tilde{u}^*}{A}}$ (the \emph{Monge-Amp{\`e}re measure} of $\tilde{u}^*$), by \cite[Lemma 2]{Caffarelli92} we then have for some constant $C>0$ depending only the bounds of the densities of $\sourcemeas$ and $\targetmeas$ away from zero and infinity on their supports, for any Borel $A\subset \R^n$
\begin{align*}
 C^{-1}\Leb{A\cap\target}\leq \tilde{M}(A)\leq C\Leb{A\cap \target}.
\end{align*}
In turn, since $\targetpiece[1]$ is convex we find the proof of \cite[Lemma 7.5]{CaffarelliMcCann10} applies hence for any $x\in \targetpiece[1]$ and $r>0$ such that $B_{r}(x)\cap \bigcup_{i\in I\setminus \{1\}}\targetpiece[i]=\emptyset$ , the open ball $B_{r}(x)$ is a doubling neighborhood of $\tilde{M}$ with respect to $\targetpiece[1]$, with doubling constant $\delta_0$ depending only on $\sourcemeas$, $\targetmeas$, and $n$.

Next, we will show that for $r>0$ fixed, there is some $\epsilon_0>0$ such that whenever 
 $x\in \sing[1]^\uparrow\cap\source$ and $\xbar=\nabla u_1(x)$ are such that 
\begin{align}\label{eqn: disjoint from bad points}
(\nabla u_1)^{-1}(B_r(\xbar))\cap 
E_1 = \emptyset,  
\end{align}
and $\epsilon<\epsilon_0$, then the centered section $Z^{\tilde{u}^*}_\epsilon(\xbar)\subset B_r(\xbar)$. The proof will closely follow that of \cite[Lemma 7.11]{CaffarelliMcCann10}. Suppose the claim fails: for some fixed $r>0$ there exist sequences   $\xbar_j=\nabla u_1(x_j)$ with $x_j\in \sing[1]^\uparrow\cap\source$ 
satisfying \eqref{eqn: disjoint from bad points}, $\epsilon_j\searrow 0$ with $Z^{\tilde{u}^*}_{\epsilon_j}(\xbar_j)\not\subset B_r(\xbar_j)$. Extracting subsequences yields $\xbar_j\to \xbar_\infty$ and $x_j\to x_\infty$ with $\nabla u_1(x_\infty)=\xbar_\infty \in \targetpiece[1]$, still satisfying \eqref{eqn: disjoint from bad points}; let us also define 
\begin{align*}
 Z_{\min}:=\{\xbar\in \R^n\mid \tilde{u}^*(\xbar)=\tilde{u}^*(\xbar_\infty)+\euclidean{\xbar-\xbar_\infty}{x_\infty}\}=\subdiff{\tilde{u}}{x_\infty}.
\end{align*}
We can see that Claim \#1 in the proof of \cite[Lemma 7.11]{CaffarelliMcCann10} still holds, so in particular there is a nontrivial line segment contained in $Z_{\min}$, centered at $\xbar_\infty$ but otherwise disjoint from 
the set $\targetpiece[1]$ on which $\tilde u$ is strictly convex. 
Thus $\xbar_\infty \in (\targetpiece[1])^\partial$ and Corollary \ref{cor: interior homeo/diffeo}
implies $x_\infty \in (\sing[1]^\uparrow \cap \source)^\partial$.
Reordering if necessary, we may assume $u_i(x_\infty)$ depends monotonically on $i$, with
$u_1(x_\infty) = u_2(x_\infty) = \cdots = u_k(x_\infty) > u_{k+1}(x_\infty)$ for some $k\ge 1$.
Then
\begin{align}
\nonumber
 \subdiff{\tilde{u}}{x_\infty}&=\subdiff{u}{x_\infty}+N_{\source}(x_\infty)
\\ &=\ch(\{\xbar_\infty \}\cup\subdiff{u_2}{x_\infty} \cup \cdots \cup \subdiff{u_k}{x_\infty})+N_{\source}(x_\infty)
\label{eqn: decompose into bounded and convex cone}
\end{align}
decomposes as the sum of a bounded component and a convex cone,
in view of Lemma \ref{lem: dm}.  
Since  \eqref{eqn: disjoint from bad points} for $\xbar_k$ implies
$(\nabla u_1)^{-1}(B_r(\xbar_\infty)) \cap E_1 =\emptyset$, we see
$\xbar_\infty$ is not contained in the closed convex set
$$
\ch(N_{\source}(x_\infty)  +\cup_{i=2}^k \subdiff{u_i}{x_\infty} )
= \ch (\subdiff{u}{x_\infty} \cap (\target \setminus \bar \Omega_1)),
$$ 
hence can be strongly separated from it by a hyperplane (\cite[Corollary 11.4.2]{Rockafellar70}).  Any segment 
in \eqref{eqn: decompose into bounded and convex cone} centered at $x_\infty$ must 
be parallel to this hyperplane.  But this can only occur if the closed cone $N_{\source}(x_\infty)$
contains a complete line parallel to this segment,  contradicting the fact that $\source$ has non-empty
interior.

Thus \cite[Theorem 7.13 and Corollary 7.14]{CaffarelliMcCann10} will apply (note that differentiability of $\tilde{u}^*$ is not actually necessary to do this), proving that $u$ is $C^{1, \alpha}_{loc}$ on $(\sing[1]^\uparrow\cap \source)\setminus E_1$.

\end{proof}

In addition to giving conditions under which the exceptional sets $E_i$ of the theorem above  
lie in the boundary of $\source$, the following corollary shows the codimension $k$ submanifolds of 
Corollary \ref{cor: convexity yields C^1 smoothness of tears} enjoy H\"older differentiability,
except possibly where they intersect the boundary $(\source)^{\p}$ tangentially.

\begin{cor}[H\"older 
regularity away from
tangential tear-boundary intersections]\label{cor: exceptional sets small}
Fix $x \in E_1$ in Theorem \ref{thm: improvement when strictly convex}. 
Assume $u_i(x) \ge u_{i+1}(x)$ for all $i\in I$,
and $u_1(x)=u_k(x)>u_{k+1}(x)$. 
Also suppose the collection 
$\{\ch\left(\subdiff{u}{x} \cap \targetpiece[i]\right)\}_{i=1}^k$ is affinely independent.  
If $\targetpiece[1]$ is strictly convex then $x \in (\source)^\partial$. 
If additionally, $\targetpiece[i]$ is strictly convex for all $i \le k $ and $(\source)^\partial$ is differentiable at $x$, then
$\sing[\{1,2,\ldots,k\}]$ intersects $(\source)^\partial$ tangentially, meaning that the outer unit
normal to $\source$ at $x$ is also normal to the $C^1$ submanifold $\sing[\{1,2,\dots,k\}]$.
In this case, $\sing[\{1,2,\dots,k\}]^\uparrow \cap \source$ is $C^{1,\alpha}_{loc}$ smooth, away from
 any such tangential intersections (and any possible non-differentiabilities of $(\source)^\partial$).
\end{cor}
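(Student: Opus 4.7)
My plan is to dispatch the three conclusions in turn, each relying on a Radon-type argument rooted in the affine independence hypothesis, with claim (a) serving as the foundation.

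Claim (a) proceeds by contradiction: assume $x \in E_1 \cap \source^{\interior}$, so $N_{\source}(x) = \{0\}$ and the condition defining $E_1$ forces $\nabla u_1(x) \in \ch(\subdiff{u}{x} \cap (\target \setminus \bar\Omega_1))$. Writing $S_i := \subdiff{u}{x} \cap \bar\Omega_i$, Lemma \ref{lem: dm} combined with the ordering $u_i(x) \ge u_{i+1}(x)$ reduces the right-hand side to $\ch(\bigcup_{i=2}^k S_i)$, while $\nabla u_1(x) \in \subdiff{u_1}{x} \cap \bar\Omega_1 \subseteq S_1$. Decomposing $\nabla u_1(x) = \sum_{i \ge 2} \lambda_i \bar b_i$ with $\bar b_i \in \ch(S_i)$ and discarding indices of zero weight produces an affine dependence relation among one representative drawn from each set in a subcollection of $\{\ch(S_i)\}_{i=1}^k$. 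Since affine independence of a family of convex sets is inherited by subcollections, this contradicts the standing hypothesis, forcing $x \in \source^\partial$.

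Claim (b) builds on (a): once $x \in \source^\partial$, differentiability of $\source^\partial$ at $x$ makes $N_{\source}(x)$ a single ray $\{t\nu : t \ge 0\}$ spanned by the outer unit normal $\nu$. Strict convexity of every $\bar\Omega_i$ for $i \le k$ yields $u_i \in C^1(\source)$ by Proposition \ref{prop: u is a maximum of C^1 potentials}, so Lemma \ref{lem: dm} collapses $\subdiff{u}{x}$ to the simplex $\ch(\{\nabla u_i(x)\}_{i=1}^k)$. Writing $\nabla u_1(x) = t\nu + w$ per the definition of $E_1$, the case $t=0$ is precluded by (a), and for $t>0$ both $\nabla u_1(x)$ and $w \in \subdiff{u}{x}$ lie in $\aff(\{\nabla u_i(x)\}_{i=1}^k)$. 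Hence $t\nu$ sits in the parallel linear subspace $\linearspan(\{\nabla u_i(x)-\nabla u_k(x)\}_{i=1}^{k-1})$, which by the implicit function theorem argument of Corollary \ref{cor: convexity yields C^1 smoothness of tears} (itself depending on the affine independence of $\{\nabla u_i(x)\}_{i=1}^k$) is precisely the normal space at $x$ to the $C^1$ tear $\sing[\{1,\dots,k\}]$, yielding the asserted tangentiality.

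For claim (c), fix $y \in \sing[\{1,\dots,k\}]^\uparrow \cap \source$ that is neither a tangential intersection nor a non-differentiability of $\source^\partial$. A symmetric application of (a)--(b), with each index $i \le k$ in place of $1$, rules out $y \in E_i$ for every such $i$, so Theorem \ref{thm: improvement when strictly convex} delivers $u_i \in C^{1,\alpha}_{loc}$ near $y$ on $\sing[i]^\uparrow \cap \source$. The pairwise tears $\sing[\{i,j\}]^\uparrow$ for $i,j \le k$ are then $C^{1,\alpha}$ hypersurfaces near $y$ --- the $k=2$ case of the corollary, read off the explicit formula of Theorem \ref{thm: explicit function theorem} by propagating the H\"older regularity of $u_i$ and $u_j$ through the $1$D Legendre transforms appearing there --- and the affine-independence-enforced transversality among them upgrades their common intersection to a $C^{1,\alpha}_{loc}$ submanifold of codimension $k-1$. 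The principal obstacle I foresee is precisely this transfer: passing from H\"older regularity of each $u_i$ on its own piece $\sing[i]^\uparrow \cap \source$ to H\"older regularity of the pairwise tear demands either careful parametric control of the $1$D Legendre transforms of Theorem \ref{thm: explicit function theorem} or a piecewise gluing across the tear exploiting matching $C^1$ gradients; in either case the uniformity of constants near $y$ is the delicate point.
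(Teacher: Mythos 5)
Your claims (a) and (b) are correct and are in substance the paper's own argument, just run in contrapositive form: the paper writes the defining condition of $E_1$ as $\sum_{i=2}^k t_i\bigl(\nabla u_1(x)-\xbar_i\bigr)\in N_{\source}(x)$ with $\xbar_i\in\ch\bigl(\subdiff{u}{x}\cap\targetpiece[i]\bigr)$, observes that the affine independence hypothesis forces this vector to be nonzero (your affine-dependence contradiction is exactly this fact), concludes $x\in(\source)^{\partial}$ since an interior point would force $N_{\source}(x)=\{0\}$, and then, when all $\targetpiece[i]$ with $i\le k$ are strictly convex, identifies this nonzero combination of the differences $\nabla u_1(x)-\nabla u_i(x)$ as a normal to the $C^1$ submanifold $\sing[\{1,\ldots,k\}]$ of Corollary \ref{cor: convexity yields C^1 smoothness of tears}, hence as the outer normal to $\source$ at points of differentiability. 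Your identification of the normal space as $\linearspan\{\nabla u_i(x)-\nabla u_k(x)\}_{i<k}$ is the same computation.

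Claim (c) is where your proposal has a genuine gap, and you flag it yourself: the transfer of H\"older regularity through the one-dimensional Legendre transforms of Theorem \ref{thm: explicit function theorem} is left unresolved, and that route is unpromising, because the partial transform $u^{*}_{x'}$ is computed from the values of $u$ along entire fibers which cross the tear, where $u$ is only piecewise $C^{1,\alpha}$ (Theorem \ref{thm: improvement when strictly convex} gives regularity of each $u_i$ only on its closed region $\sing[i]^{\uparrow}\cap\source$, i.e.\ on one side of the tear). The paper's route avoids any such propagation: away from the exceptional sets $E_i$ (which parts (a)--(b), applied with $i$ in place of $1$, confine to tangential tear--boundary intersections or non-differentiabilities of $(\source)^{\partial}$), each $\nabla u_i$ is locally $\alpha$-H\"older on $\sing[i]^{\uparrow}\cap\source$, and every one of these closed regions contains $\sing[\{1,\ldots,k\}]^{\uparrow}\cap\source$. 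Hence the differences $\nabla u_1-\nabla u_i$, which span the normal space of the $C^1$ submanifold furnished by Corollary \ref{cor: convexity yields C^1 smoothness of tears} and do not vanish there by affine independence, are H\"older continuous along the tear itself; writing the tear locally as a $C^1$ graph via the implicit function theorem applied to the system $u_1-u_i=0$, $2\le i\le k$, the derivative of the graph map is a smooth algebraic expression in these gradients restricted to the (Lipschitz) graph, hence H\"older, which is exactly the asserted $C^{1,\alpha}_{loc}$ smoothness. In short, only the one-sided H\"older bound of Theorem \ref{thm: improvement when strictly convex} evaluated on the tear is needed, and no gluing across the tear or parametric control of Legendre transforms arises.
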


\begin{proof}
Suppose $x \in E_1 \subset  \sing[1]^\uparrow \cap \spt \mu$. By our assumptions and Lemma \ref{lem: dm}, we have $\subdiff{u}{x}\subset \ch\left(\bigcup_{i=1}^k\targetpiece[i]\right)$, hence
\begin{align*}
 \ch \left(\subdiff{u}{x} \cap (\target \setminus \bar \Omega_1)\right)\subset \ch \left(\bigcup_{i=2}^k(\subdiff{u}{x} \cap \targetpiece[i])\right)=\bigcup_{i=2}^k\ch \left(\subdiff{u}{x} \cap \targetpiece[i]\right)
\end{align*}
Thus there
exist $\xbar_i \in \ch\left(\subdiff{u}{x} \cap \targetpiece[i]\right)$ and $t_i \ge 0$ with $1=\sum_{i=2}^k t_i$ such that 
\begin{align}\label{eqn: lc normal}
\sum_{i=2}^k t_i (\nabla u_1(x) - \xbar_i)  \in N_{\source}(x)
\end{align}
according to \eqref{eqn: exceptional points} of Theorem \ref{thm: improvement when strictly convex}.
Setting $\xbar_1 = \nabla u_1(x)$,  
the affine independence of $\{\xbar_i\}_{i \le k}$ makes $\{\bar x_1-\bar x_i\}_{2\le i \le k}$ linearly independent.
Thus $\sum_{i=2}^k t_i =1$ forces the sum in \eqref{eqn: lc normal} not to vanish.  

Now $x \in (\source)^{\interior}$ would force $N_{\source}(x) =\{0\}$, 
contradicting the last sentence. 
Thus we conclude $x$ is contained in the boundary of
$\source$.  If, in addition, $\targetpiece[i]$ is strictly convex for all $i \le k$ then 
$\nabla u_1(x) - \bar x_i = \nabla u_1(x)-\nabla u_i(x)$ is a (non-zero) normal to the hypersurface
$\sing[\{1,i\}] = \{u_1=u_i\}$, which is $C^1$ smooth 
by Corollary \ref{cor: convexity yields C^1 smoothness of tears}, noting that a collection of two sets is affinely independent if they are disjoint.
Thus the sum in \eqref{eqn: lc normal} is normal to the codimension 
$k-1$ submanifold $\sing[\{1,\ldots,k\}] = \cap_{i=2}^k \sing[\{1,i\}]$ of the same corollary.
Since  \eqref{eqn: lc normal} is non-vanishing, it is an outer normal to $\spt \mu$ when the latter 
is differentiable at $x$.  Away from such points,  the improvement in regularity from $C^1$ to $C^{1,\alpha}_{loc}$ comes from Theorem \ref{thm: improvement when strictly convex} and the implicit 
function theorem.
\end{proof}

When $k=2$ and both target components are strictly convex,  an analous result was shown 
simultaneously and independently from us by Chen \cite{Chen16p},  who went on to show $C^{2,\alpha}$
regularity of the tear provided the target components are sufficiently far apart.

\section{Global structure of optimal map discontinuities: MTW costs}\label{subsection: global structure general cost}
\label{section: global MTW}

For quadratic transportation costs, we have already shown that when the support of the target measure consists of a number of affinely independent regions, the optimal transport map induces a partition of the source domain into sets corresponding to each of these regions. In this section,  we extend one such result --- 
Proposition \ref{prop: euclidean case interface} --- to MTW costs. 
While we expect other results
from Sections \ref{section: global quadratic} and \ref{section: smoother quadratic} also to have analogs for such costs,   we do not pursue such extensions in the present
manuscript.

\begin{thm}[Pairwise partitions of source]\label{thm: interface}
 Suppose the cost function $c$ satisfies \eqref{B1} and \eqref{MTW}, and $\outerdom$ and $\outertarget$ are $c$-convex with respect to each other. Also suppose $\sourcemeas$ and $\targetmeas$ are absolutely continuous probability measures on $\outerdom$ and $\outertarget$ respectively, where $\target=\targetpiece[1]\cup \targetpiece[2]$ is such that there exists $\xbar_0\in\outertarget$ for which the sets $\bigcup_{x\in\outerdom}[-D_{\xbar}D_xc(x, \xbar_0)]^{-1}(\coord{\targetpiece[1]}{x})$ and $\bigcup_{x\in\outerdom}[-D_{\xbar}D_xc(x, \xbar_0)]^{-1}(\coord{\targetpiece[2]}{x})$ are strongly separated by a hyperplane $\Pi\subset \tanspMbar{\xbar_0}$.
%
 
 Then an optimal potential $u$ transporting $\sourcemeas$ to $\targetmeas$ can be written as $u=\max\{u_1, u_2\}$, where $u_1$ and $u_2$ are $c$-convex functions such that 
\begin{align}\label{eqn: mapping destination}
 \cExp{x}{D u_i(x)}&\in \targetpiece[i],\quad a.e.\ x\in\outerdom.
\end{align}
Moreover, under the global coordinates induced by $x\mapsto -D_{\xbar}c(x, \xbar_0)$ on $\outerdom$, the sets $\{u_1=u_2\}$, $\{u_1>u_2\}$, and $\{u_1<u_2\}$ in $\outerdom$ are given by the graph, open epigraph, and open subgraph respectively of a DC function $h$ defined on the projection of $\coord{\outerdom}{\xbar_0}$ onto the hyperplane $\Pi$.

Additionally, if 
\begin{align}\label{eqn: disjointness of c-subdifferential condition}
 \left[\bigcup_{x\in\outerdom}\cExp{x}{\ch\left(\coord{\targetpiece[1]}{x}\right)}\right]\cap \left[\bigcup_{x\in\outerdom}\cExp{x}{\ch\left(\coord{\targetpiece[2]}{x}\right)}\right]=\emptyset,
\end{align}
then the sets $\{u_1=u_2\}$, $\{u_1\geq u_2\}$, and $\{u_1\leq u_2\}$ are connected.
\end{thm}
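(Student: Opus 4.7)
The plan is to reduce the statement to the Euclidean setting of Theorem~\ref{thm: explicit function theorem} via a Loeper-type change of coordinates. First, invoke Lemma~\ref{lem: u is a maximum of potentials} on the decomposition $\target=\targetpiece[1]\cup\targetpiece[2]$: this produces $u=\max\{u_1,u_2\}$ with each $u_i$ $c$-convex and satisfying $\cExp{x}{Du_i(x)}\in\targetpiece[i]$ almost everywhere, establishing \eqref{eqn: mapping destination}. By \eqref{B1} the map $F(x):=-D_{\xbar}c(x,\xbar_0)$ is a diffeomorphism from $\outerdom$ onto $\coord{\outerdom}{\xbar_0}$, and by the $c$-convexity of $\outerdom$ with respect to $\xbar_0$ the image $\coord{\outerdom}{\xbar_0}$ is a convex subset of $\tanspMbar{\xbar_0}$; we work henceforth in the induced $y=F(x)$ coordinates.

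Next, introduce the transformed potentials
\begin{equation*}
 \tilde u_i(y):=u_i(F^{-1}(y))+c(F^{-1}(y),\xbar_0),\qquad i=1,2.
\end{equation*}
Loeper's maximum principle---a consequence of \eqref{MTW} together with the $c$-convexity of $\outertarget$ with respect to $\outerdom$---makes each mountain function $y\mapsto -c(F^{-1}(y),\xbar)+c(F^{-1}(y),\xbar_0)$ convex on $\coord{\outerdom}{\xbar_0}$ for every $\xbar\in\outertarget$. Since $u_i$ is a supremum of such mountains indexed by $\xbar\in\targetpiece[i]$, the function $\tilde u_i$ is convex. A chain-rule computation identifies $\partial\tilde u_i(y)$ with $[-D_{\xbar}D_xc(x,\xbar_0)]^{-1}(\coord{\csubdiff{u_i}{x}}{x})$ modulo the common offset $[-D_{\xbar}D_xc(x,\xbar_0)]^{-1}\coord{\xbar_0}{x}$ at $x=F^{-1}(y)$. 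Since the offset is shared by $i=1,2$, the theorem's hypothesis yields strong separation of $\partial\tilde u_1(\coord{\outerdom}{\xbar_0})$ from $\partial\tilde u_2(\coord{\outerdom}{\xbar_0})$ by a suitable translate of $\Pi$ in $\tanspMbar{\xbar_0}$. Corollary~\ref{cor: explicit function theorem} applied to $\tilde u_1,\tilde u_2$ on the convex set $\coord{\outerdom}{\xbar_0}$ then produces a DC function $\tilde h$ on the projection of $\coord{\outerdom}{\xbar_0}$ onto the (translated) separating hyperplane, whose graph, open epigraph, and open subgraph realize $\{\tilde u_1=\tilde u_2\}$, $\{\tilde u_1>\tilde u_2\}$, and $\{\tilde u_1<\tilde u_2\}$ respectively. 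Because $\tilde u_1-\tilde u_2=u_1\circ F^{-1}-u_2\circ F^{-1}$, pulling back through $F$ yields the claimed DC description of the three sets in the $y$-coordinates on $\outerdom$.

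For the connectedness claim under hypothesis \eqref{eqn: disjointness of c-subdifferential condition}, the strategy mirrors the Euclidean proof of Proposition~\ref{prop: euclidean case interface}. In the $y$-coordinates, the three sets become subsets of the convex $\coord{\outerdom}{\xbar_0}$, whose convexity combined with the Lipschitz continuity of $\tilde h$ already delivers connectedness of $\{\tilde u_1=\tilde u_2\}$ and of the open epi/subgraphs. For the closed superlevel set $\{u_1\geq u_2\}\cap\source$ one introduces a $c$-convex extension $\tilde u$ of $u$ outside $\source$ playing the role of $u+\dist(\cdot,\source)^2$ in the Euclidean argument, and identifies this set with the $c^*$-subdifferential $\cstarsubdiff{\tilde u^c}{\targetpiece[1]}$; Lemma~\ref{lem: connectivity of c-subdifferential} applied to the reciprocal cost $c^*$ then transports connectedness from $\targetpiece[1]$ (after replacing it by the appropriate $c^*$-convex hull) through the $c^*$-subdifferential, with \eqref{eqn: disjointness of c-subdifferential condition} precisely ruling out spurious contributions from $\targetpiece[2]$. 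A symmetric argument disposes of $\{u_1\leq u_2\}$. The main obstacle will be twofold: (i) the careful bookkeeping needed in paragraph two to check that the $\xbar_0$-dependent shift $[-D_{\xbar}D_xc(x,\xbar_0)]^{-1}\coord{\xbar_0}{x}$ can indeed be absorbed into a single translate of $\Pi$ uniform in $x$ (the precise formulation of the hypothesis should encode exactly this); and (ii) constructing a $c$-convex analogue of the squared-distance barrier used in Proposition~\ref{prop: euclidean case interface}, since the Euclidean argument there rests on translation invariance and an explicit Legendre transform, neither of which is available in the MTW setting.
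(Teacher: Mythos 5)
Your overall strategy (decompose via Lemma~\ref{lem: u is a maximum of potentials}, pass to the coordinates $p=-D_{\xbar}c(x,\xbar_0)$, apply the explicit function theorem, and use Lemma~\ref{lem: connectivity of c-subdifferential} for connectedness) points in the right direction, but the execution has two genuine gaps. First, your claim that \eqref{MTW} makes each mountain $y\mapsto -c(F^{-1}(y),\xbar)+c(F^{-1}(y),\xbar_0)$ \emph{convex} on $\coord{\outerdom}{\xbar_0}$ is not available under the stated hypotheses: Loeper's maximum principle only gives quasi-convexity (level-set convexity along the relevant segments); genuine convexity of mountains requires non-negative cross-curvature, which is strictly stronger than \eqref{MTW}. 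So your transformed potentials $\tilde u_i$ need not be convex, and Corollary~\ref{cor: explicit function theorem} cannot be applied globally to them. Moreover, even granting convexity, adding the term $c(F^{-1}(y),\xbar_0)$ shifts $\partial\tilde u_i$ by the $x$-dependent vector $[-D_{\xbar}D_xc(x,\xbar_0)]^{-1}\left(-D_xc(x,\xbar_0)\right)$, so the hypothesis — which separates the \emph{unshifted} sets $\bigcup_x[-D_{\xbar}D_xc(x,\xbar_0)]^{-1}(\coord{\targetpiece[i]}{x})$ — no longer produces a single separating hyperplane; this is exactly your unresolved obstacle (i). The paper avoids both problems at once: it sets $\util_i(p)=u_i(\cstarExp{\xbar_0}{p})$ with no mountain added (so the subdifferential at $p_0$ is exactly $[-D_{\xbar}D_xc(x_0,\xbar_0)]^{-1}(\subdiff{u_i}{x_0})$ and the hypothesis applies verbatim), convexifies only \emph{locally} by adding $\tfrac{C}{2}\norm{p-p_0}^2$ on a small ball, applies Corollary~\ref{cor: explicit function theorem} there, and then shows by an explicit partial Legendre transform computation that the resulting height function $h(p')=\frac{-\util^*_{p'}(a_0-d_0)+\util^*_{p'}(a_0+d_0)}{2d_0}$ is independent of $C$ and of $p_0$, so the local graphs patch into one global DC graph. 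Without such a patching argument (or a cross-curvature hypothesis) your global representation does not follow.

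Second, the connectedness part. Your assertion that convexity of $\coord{\outerdom}{\xbar_0}$ plus continuity of $h$ ``already delivers'' connectedness of $\{u_1=u_2\}$ and of the strict sets is unjustified: these sets are the graph/epigraph/subgraph \emph{intersected with} $\coord{\outerdom}{\xbar_0}$, and the intersection of a continuous graph with a convex body can be disconnected — which is precisely why the theorem imposes the extra hypothesis \eqref{eqn: disjointness of c-subdifferential condition} for this conclusion. Your proposed remedy for the closed sets, a $c$-convex analogue of the squared-distance barrier of Proposition~\ref{prop: euclidean case interface} (your obstacle (ii)), is left unconstructed and is in fact unnecessary: under \eqref{eqn: disjointness of c-subdifferential condition} combined with Loeper's theorem one gets $\csubdiff{u_1}{\outerdom}\cap\csubdiff{u_2}{\outerdom}=\emptyset$, and the paper then proves directly, using Lemma~\ref{lem: dm}, that $\{u_1\geq u_2\}=\cstarsubdiff{u^c}{\csubdiff{u_1}{\outerdom}}$; two applications of Lemma~\ref{lem: connectivity of c-subdifferential} (first to $u_1$ on the connected set $\outerdom$, then to $u^c$ on the connected set $\csubdiff{u_1}{\outerdom}$) give connectedness, with the symmetric argument for $\{u_1\leq u_2\}$. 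As written, your proposal leaves both the global DC representation and the connectedness claim without proof.
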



\begin{proof}[Proof of Theorem \ref{thm: interface}]
Again Lemma \ref{lem: u is a maximum of potentials} gives the representation  $u=\max\{u_1, u_2\}$ and \eqref{eqn: mapping destination}, note we have not exploited any convexity properties of the $\targetpiece[i]$ so far. 
Write 
\begin{align*}
 \outerdom_=:&=\{x\in\outerdom\mid u_1(x)=u_2(x)\},\\
 \outerdom_<:&=\{x\in\outerdom\mid u_1(x)<u_2(x)\},\quad
 \outerdom_>:=\{x\in\outerdom\mid u_1(x)>u_2(x)\},\\
  \outerdom_\leq:&=\{x\in\outerdom\mid u_1(x)\leq u_2(x)\},\quad
 \outerdom_\geq:=\{x\in\outerdom\mid u_1(x)\geq u_2(x)\}.
\end{align*}

Now make a change of variables under $\cstarExp{\xbar_0}{\cdot}$ and define $\util_i: \coord{\outerdom}{\xbar_0}\to\R$ by 
\begin{align*}
 \util_i(p):&=u_i(\cstarExp{\xbar_0}{p}),\quad \util:=\max\{\util_1, \util_2\}.
\end{align*}
We will identify $\cotanspMbar{\xbar_0}\cong \tanspMbar{\xbar_0}\cong \R^n$, and without loss of generality assume the separating hyperplane $\Pi$ is $\{p^n=\midheight[0]\}$ for some $\midheight[0]\in\R$, with width $d_0>0$.

Now take any point $p_0\in\outerdom^{\interior}$ with $\util_1(p_0)=\util_2(p_0)$. For a sufficiently small $r>0$, there is some $C>0$ for which $\utiltil_i:=\util_i+\frac{C}{2}\norm{p-p_0}^2$ are both convex functions on $B_r(p_0)\subset \outerdom$. 
Since $c$ satisfies \eqref{MTW} and $\outerdom$ and $\outertarget$ are $c$-convex with respect to each other, writing $x_0:=\cstarExp{\xbar_0}{p_0}$ we have for $i=1$, $2$,
\begin{align*}
 \subdiff{\utiltil_i}{p_0}&=\subdiff{\util_i}{p_0}=[-D_{\xbar}D_xc(x_0, \xbar_0)]^{-1}(\subdiff{u_i}{x_0})\\
 &=[-D_{\xbar}D_xc(x_0, \xbar_0)]^{-1}\coord{\csubdiff{u_i}{x_0}}{x_0}\\
 &\subset \ch\left([-D_{\xbar}D_xc(x_0, \xbar_0)]^{-1}(\coord{\targetpiece[i]}{x_0})\right),
\end{align*}
which are strongly separated from each other by $\{p^n=a_0\}$ with spacing $d_0$ by assumption. Then by \cite[Corollary 24.5.1]{Rockafellar70}, there is some $r>0$ for which $\subdiff{\utiltil_1}{B_r(p_0)}$ and $\subdiff{\utiltil_2}{B_r(p_0)}$ are still strongly separated with spacing $d_0$. Let us write $\tilde{\outerdom}_=$, $\tilde{\tilde{\outerdom}}_=$ for $\outerdom_=$ with $u_i$ replaced by $\util_i$ or $\utiltil_i$ and $\outerdom$ by $\coord{\outerdom}{\xbar_0}$ or $\coord{\outerdom}{\xbar_0}\cap B_r(p_0)$ respectively (and likewise for $<$ and $>$). Then we may apply Corollary \ref{cor: explicit function theorem} to find that the sets $\tilde{\tilde{\outerdom}}_{=}$, $\tilde{\tilde{\outerdom}}_{<}$, and $\tilde{\tilde{\outerdom}}_{>}$ 
 are the graph, open subgraph, and open epigraph respectively of the function
\begin{align*}
p'\mapsto \frac{-\utiltil^*_{p'}(a_0-d_0)+\utiltil^*_{p'}(a_0+d_0)}{2d_0}=h(p')
\end{align*}
over $B_r(p_0)\subset \R^{n-1}$ where again, $\utiltil^*_{p'}$ is the Legendre transform in just the $n$th variable. Now by \cite[Theorem 16.4]{Rockafellar70} we see that 
\begin{align*}
\utiltil^*_{p'}(a_0-d_0)
&=\inf_{s\in\R}(\util_{p'}^*(a_0-d_0-s)+\left(\frac{C(\norm{p'-p_0'}^2+(\cdot-p_0^n)^2)}{2}\right)^*(s)),
\end{align*}
and a quick calculation yields
\begin{align}
\left(\frac{C(\norm{p'-p_0'}^2+(\cdot-p_0^n)^2)}{2}\right)^*(s)&= \sup_{t\in \R}(ts-\frac{C}{2}(\norm{p'-p_0'}^2+(t-p_0^n)^2))\notag\\
&=\sup_{t\in \R}(ts-\frac{C}{2}(t-p_0^n)^2)-\frac{C}{2}\norm{p'-p_0'}^2\notag\\
 &=\frac{s^2}{2C}+sp_0^n-\frac{C(p_0^n)^2}{2}-\frac{C}{2}\norm{p'-p'_0}^2.\label{eqn: utiltil conj}
\end{align}
At the same time, from the proof of Theorem \ref{thm: explicit function theorem} we see that $\util_{p'}^*$ is a convex function with $p_0^n\in \subdiff{\util_{p'}^*}{a_0-d_0}$. Thus by \eqref{eqn: utiltil conj} we find, 
\begin{align*}
 \utiltil^*_{p'}(a_0-d_0)&=\inf_{s\in\R}(\util_{p'}^*(a_0-d_0-s)+\frac{s^2}{2C}+sp_0^n-\frac{C(p_0^n)^2}{2}-\frac{C}{2}\norm{p'-p'_0}^2)\\
 &\geq \util_{p'}^*(a_0-d_0)-sp_0^n+\frac{s^2}{2C}+sp_0^n-\frac{C(p_0^n)^2}{2}-\frac{C}{2}\norm{p'-p'_0}^2\\
 &\geq \util_{p'}^*(a_0-d_0)-\frac{C(p_0^n)^2}{2}-\frac{C}{2}\norm{p'-p'_0}^2,
\end{align*}
with equality achieved for the choice $s=0$.
A similar calculation shows 
\begin{align*}
 \utiltil^*_{p'}(a_0+d_0)=\util_{p'}^*(a_0+d_0)-\frac{C(p_0^n)^2}{2}-\frac{C}{2}\norm{p'-p'_0}^2
\end{align*}
hence 
\begin{align*}
 h(p')=\frac{-\util_{p'}^*(a_0-d_0)+\util_{p'}^*(a_0+d_0)}{2d_0},
\end{align*}
the significance being that this function does not depend on the constant $C$ in $\utiltil$, hence is independent of the point $p_0$. Since $\utiltil_1(p)=\utiltil_2(p)$ for $p\in B_r(p_0)$ if and only if $\util_1(p)=\util_2(p)$, by the continuity of $u_i$ up to the boundary of $\outerdom$ we obtain that $\tilde{\outerdom}_{=}$, $\tilde{\outerdom}_{<}$, $\tilde{\outerdom}_{>}$ are equal to the graph, open subgraph, open epigraph respectively of $h$ over the projection of $\coord{\outerdom}{\xbar_0}$ on $\Pi$.

Now assume condition \eqref{eqn: disjointness of c-subdifferential condition} holds. Since $\csubdiff{u_i}{x}\subset \cExp{x}{\ch\left(\coord{\targetpiece[i]}{x}\right)}$, by \eqref{eqn: mapping destination} combined with \cite[Theorem 3.1]{Loeper09}, we see that
\begin{align*}
 \csubdiff{u_1}{\outerdom}\cap \csubdiff{u_2}{\outerdom}=\emptyset.
\end{align*}
We now claim 
\begin{align}\label{eqn: equality of c-subdifferential images}
 \cstarsubdiff{u^c}{\csubdiff{u_1}{\outerdom}}=\outerdom_{\geq},
\end{align}
which concludes the proof by Lemma \ref{lem: connectivity of c-subdifferential} (and a symmetric argument switching the roles of $u_1$ and $u_2$).

Suppose $u_1(x)\geq u_2(x)$. Then since $\coord{\csubdiff{u_i}{x}}{x}=\subdiff{u_i}{x}$, Lemma \ref{lem: dm} yields that 
\begin{align*}
 \csubdiff{u}{x}=\csubdiff{u_1}{x}\text{ or }\csubdiff{u}{x}=\cExp{x}{\ch(\subdiff{u_1}{x}\cup \subdiff{u_2}{x})}.
\end{align*}
In either case, there exists $\xbar\in \csubdiff{u_1}{x}\cap \csubdiff{u}{x}$ which implies $x\in\cstarsubdiff{u}{\xbar}$, and in particular $x\in \cstarsubdiff{u^c}{\csubdiff{u_1}{\outerdom}}$, thus $\cstarsubdiff{u^c}{\csubdiff{u_1}{\outerdom}}\supset\outerdom_{\geq}$. 
On the other hand, suppose $x\in \cstarsubdiff{u^c}{\csubdiff{u_1}{\outerdom}}$ but $u_1(x)<u_2(x)$. Then there exist $y\in\outerdom$ and $\xbar\in\outertarget$ with $\xbar\in\csubdiff{u_1}{y}$ and $x\in \cstarsubdiff{u^c}{\xbar}$, or equivalently $\xbar\in\csubdiff{u}{x}$. However, since $u_1(x)<u_2(x)$, we can again use Lemma \ref{lem: dm} to see that  $\csubdiff{u}{x}=\csubdiff{u_2}{x}$. This contradicts the disjointness of  $\csubdiff{u_1}{\outerdom}$ and $\csubdiff{u_2}{\outerdom}$, thus we must have \eqref{eqn: equality of c-subdifferential images}.
\end{proof}

\section{Stability of tears}
\label{section: stability of tears}

Our main goal of this section is to establish a stability result for the multiplicity of singularities of an optimal potential, under certain perturbations of the target measure. To do so, we must first choose an appropriate notion of perturbation for the target measure. In this case, we would only expect stability under perturbations of the target measure that prohibit moving even small amounts of mass to a far away location. Thus a good candidate is the $\mathcal{W}_\infty$ metric defined below.
\begin{defin}[$\infty$-Kantorovich-Rubinstein-Wasserstein distance]\label{def: W-infinity}
 Given two probability measures $\nu_1$ and $\nu_2$ on $\Mbar$, the $\mathcal{W}_\infty$ distance between them is defined by
\begin{align*}
 \Winfty{\nu_1}{\nu_2}:=\inf\left\{\lVert d_{\gbar}\rVert_{L^\infty(\gamma)}\mid \gamma\in\Pi(\nu_1, \nu_2)\right\}.
\end{align*}
Here, $d_{\gbar}$ is the geodesic distance on $\Mbar$ induced by the associated Riemannian metric, and $\Pi(\nu_1, \nu_2)$ is the set of probability measures on $\Mbar\times\Mbar$ whose left and right marginals are $\nu_1$ and $\nu_2$, repectively
\end{defin}

To obtain stability, we again require affine independence (Definition \ref{defin: affine independence of sets}) of the pieces of the support of the target measure. See Example \ref{ex: unstable example} for a counterexample to stability when this independence is not present.

We are now ready to state the stability result.


\begin{thm}[Stability of tears]\label{thm: OT stability theorem}
 Suppose a cost function $c: \outerdom\times\outertarget\to\R$ satisfies \eqref{B1} and \eqref{MTW}, and the measures $\sourcemeas$ and $\targetmeas$ on  $\outerdom \subset \M$ and $\outertarget \subset \Mbar$ respectively satisfy conditions (I) and (II) above \eqref{eqn: FKM conditions}--\eqref{eqn: GK conditions}.
 Also let $u$ be an optimal potential transporting $\sourcemeas$ to $\targetmeas$ with cost $c$ and suppose $\convpot$ has multiplicity $k+1\leq K$ at $x_0\in\sourceint$, relative to a finite collection $\left\{\targetpiece[i]\right\}_{i=1}^{K}$
of disjoint compact sets whose union is $\target$.  Reorder if necessary,  so that $u$ also has multiplicity $k+1$ with 
respect to the subcollection $\left\{\coord{\targetpiece[i]}{x_0}\right\}_{i=1}^{k+1}$ consisting of the first $k+1$ sets;  assume this subcollection
is affinely independent and consists of strictly convex sets.

 Then for any $\epsilon>0$, there exists a $\threshold>0$ depending only on $\epsilon,c$, $\source$, and $\{\targetpiece[i]\}_{i=1}^K$, such that for any $\perturbmeas[\threshold]$ with $\Winfty{\targetmeas}{\perturbmeas[\threshold]}<\threshold$ and any optimal potential $\perturbconvpot{\threshold}$ transporting $\sourcemeas$ to $\perturbmeas[\threshold]$, there is a DC submanifold of dimension $n-k$ in 
$B_{\epsilon}{\left(x_0\right)}\subset \R^n$ on which $\perturbconvpot{\threshold}$ has multiplicity $k+1$ relative to $\left\{\nbhdof[\threshold]{\targetpiece[i]}\right\}_{i=1}^{K}$ at every point.
\end{thm}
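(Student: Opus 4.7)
The plan is to reduce Theorem \ref{thm: OT stability theorem} to the stability theorem for singularities of maxima of semi-convex functions (Theorem \ref{thm: stability theorem}), applied in a suitable local coordinate chart around $x_0$. First, combining Lemma \ref{lem: u is a maximum of potentials} with Proposition \ref{prop: u is a maximum of C^1 potentials}, I would write $\convpot = \max_{1 \le i \le K} u_i$ where each $u_i$ is $c$-convex; since $\targetpiece[i]$ is strictly $c$-convex with respect to $\outertarget$ for $1 \le i \le k+1$ by hypothesis, the corresponding $u_i$ lie in $C^1(\outerdom)$. To bring in the perturbation, fix $\threshold[0] > 0$ smaller than half the minimum pairwise $d_{\gbar}$-distance between the compact sets $\targetpiece[i]$, so that the neighborhoods $\nbhdof[\threshold]{\targetpiece[i]}$ remain pairwise disjoint for every $\threshold < \threshold[0]$. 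The $\mathcal{W}_\infty$-hypothesis forces $\spt(\perturbmeas[\threshold]) \subset \bigcup_i \nbhdof[\threshold]{\targetpiece[i]}$; setting $\targetpiece[i]^\threshold := \spt(\perturbmeas[\threshold]) \cap \nbhdof[\threshold]{\targetpiece[i]}$, Lemma \ref{lem: u is a maximum of potentials} then yields a parallel decomposition $\perturbconvpot{\threshold} = \max_{1 \le i \le K} \perturbconvpot[i]{\threshold}$ for the perturbed optimal potential.

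Next, I would verify the hypotheses of Theorem \ref{thm: stability theorem} at $x_0$, working after a diffeomorphic change of coordinates via $-D_x c(\cdot, \bar x_0)$ for some fixed $\bar x_0 \in \outertarget$ allowed by \eqref{B1}. In these coordinates the $c$-convex functions $u_i$ and $\perturbconvpot[i]{\threshold}$ are semi-convex, with uniformly bounded semi-convexity constants depending only on the $C^4$ norm of $c$ on $\outerdom \times \outertarget$. Reordering the indices $i \ge k+2$ if necessary, the multiplicity-$(k+1)$ hypothesis becomes $\convpot(x_0) = u_i(x_0)$ for $1 \le i \le k+1$ and $\convpot(x_0) > u_{i'}(x_0)$ for $i' \ge k+2$. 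By Lemma \ref{lem: dm} combined with the $C^1$ regularity of the first $k+1$ potentials, $\subdiff{\convpot}{x_0} = \ch\{\nabla u_i(x_0) : 1 \le i \le k+1\}$ with each $\nabla u_i(x_0) \in \coord{\targetpiece[i]}{x_0}$; the affine independence hypothesis then forces $\affdim\subdiff{\convpot}{x_0} = k$.

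The crucial analytical step is to establish uniform convergence $\perturbconvpot[i]{\threshold} \to u_i$ on compact subsets of a neighborhood of $x_0$ as $\threshold \searrow 0$. Standard $\mathcal{W}_\infty$-stability of optimal transport potentials (e.g.\ \cite{Villani09}) gives, after normalizing by additive constants, that $(\perturbconvpot{\threshold})^c \to \convpot^c$ uniformly on $\outertarget$, while $\targetpiece[i]^\threshold \to \targetpiece[i]$ in Hausdorff distance; continuity of $c$ applied to the defining supremum formula \eqref{eqn: piecewise c-transform} then yields the desired uniform convergence of each $\perturbconvpot[i]{\threshold}$ to $u_i$. After subtracting a fixed quadratic to render the relevant functions convex in a neighborhood of $x_0$, all hypotheses of Theorem \ref{thm: stability theorem} are satisfied.

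That theorem then provides, for each $\epsilon>0$, a $\threshold>0$ depending on $\epsilon$ such that whenever $\Winfty{\targetmeas}{\perturbmeas[\threshold]} < \threshold$ there exists a set $\perturbsing[n-k]{\threshold} \subset B_\epsilon(x_0)$ of positive $\haus[n-k]$ measure, given locally as the graph of a DC mapping over an open subset of $\R^{n-k}$, on which $\perturbconvpot{\threshold}(x) = \perturbconvpot[i]{\threshold}(x) > \perturbconvpot[i']{\threshold}(x)$ for all $1\le i \le k+1 < i' \le K$ and $\affdim\subdiff{\perturbconvpot{\threshold}}{x} \ge k$. The final translation from coincidence of the $\perturbconvpot[i]{\threshold}$ to multiplicity $k+1$ of $\perturbconvpot{\threshold}$ relative to $\{\nbhdof[\threshold]{\targetpiece[i]}\}_{i=1}^K$ follows the argument used to prove \eqref{eqn: M_k means only touches k}: at differentiability points, \eqref{eqn: gradient maps into pieces} places $\cExp{x}{\nabla \perturbconvpot[i]{\threshold}(x)}$ in $\targetpiece[i]^\threshold \subset \nbhdof[\threshold]{\targetpiece[i]}$, and at non-differentiability points an extremal-limit argument via Lemma \ref{lem: dm} shows $\csubdiff{\perturbconvpot{\threshold}}{x}$ intersects each $\nbhdof[\threshold]{\targetpiece[i]}$ precisely for $i \le k+1$, giving the required multiplicity on $\perturbsing[n-k]{\threshold}$. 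The main obstacle I expect is the $\mathcal{W}_\infty$-stability of the piecewise potentials $\perturbconvpot[i]{\threshold}$ near the singularity $x_0$, requiring careful use of both uniform convergence of $(\perturbconvpot{\threshold})^c$ and Hausdorff convergence of the pieces $\targetpiece[i]^\threshold$.
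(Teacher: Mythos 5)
Your proposal follows essentially the same route as the paper: decompose $u=\max_i u_i$ via Lemma \ref{lem: u is a maximum of potentials} and Proposition \ref{prop: u is a maximum of C^1 potentials}, use Lemma \ref{lem: dm} plus affine independence to get $\dim\partial u(x_0)=k$ (this is the content of the paper's Lemma \ref{lem: with singular point}), pass to local coordinates where everything is uniformly semi-convex, feed the perturbed decomposition into Theorem \ref{thm: stability theorem}, and then convert coincidence of the $u^\delta_i$ back into multiplicity $k+1$. The one place where your write-up is structurally weaker than the paper's is the convergence step. The conclusion must hold uniformly over \emph{all} measures $\nu^\delta$ within $\mathcal{W}_\infty$-distance $\delta$ and \emph{all} of their optimal potentials, whereas "standard $\mathcal{W}_\infty$-stability" statements (and Theorem \ref{thm: stability theorem} itself, which is phrased for a single sequence with an index threshold $J_\epsilon$) are sequential. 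The paper resolves this by arguing by contradiction: if the theorem failed, there would be sequences $\delta_j\searrow 0$ and bad potentials $u^{\delta_j}$; uniform Lipschitz bounds from $c\in C^4$ plus Arzel\`a--Ascoli, sequential stability of optimal transport, and convexity of $\spt\mu$ (to pin down the limit as $u$ up to a constant) then supply exactly the uniform convergence you assert directly. Your direct claim that $u^\delta_i\to u_i$ "as $\delta\searrow 0$" would need either this compactness wrapper or a quantitative modulus of continuity to be rigorous; as written it treats a whole $\delta$-ball of perturbations as a one-parameter family. Finally, for the upper bound on the multiplicity (that $\partial_c u^\delta(x)$ misses $\nbhdof[\threshold]{\targetpiece[i']}$ for $i'\geq k+2$) the paper uses Lemma \ref{lem: uniform convergence of subdifferentials} applied to the $C^1$ limits $u_i$; your "extremal-limit argument via Lemma \ref{lem: dm}" should be routed through that lemma to control where the perturbed subdifferentials land. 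These are repairable, standard fixes rather than conceptual errors.
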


The discrepancy of $k$ versus $k+1$ between Theorem~\ref{thm: stability theorem} and Theorem \ref{thm: OT stability theorem} arises because the affine hull of $k+1$ affinely independent points generates an affine subspace of dimension $k$.

We first show a lemma which uses the affine independence assumption to deduce $\dim \p u(x_0)=k$, so that
Theorem~\ref{thm: stability theorem} can be applied. To do so requires some finer properties of the $c$-subdifferentials of each of the functions $u_i$ which make up $u$ in the decomposition constructed in Lemma \ref{lem: u is a maximum of potentials}.


\begin{lem}\label{lem: with singular point}
 Suppose $\left\{\convpot[i]\right\}_{i=1}^K$ is the collection of $c$-convex functions obtained by applying Lemma~\ref{lem: u is a maximum of potentials} to the optimal potential $u$ under the conditions of Theorem~\ref{thm: OT stability theorem}. Ordering indices as in Theorem \ref{thm: OT stability theorem}, $u_i \in C^1(\Omega)$ for $i\le k+1$ and
\begin{align}
 \subdiff{\convpot}{x_0}\cap \coord{\targetpiece[i]}{x_0}
 &=\begin{cases}
 \left\{D\convpot[i](x_0)\right\},& 1\leq i \leq k+1,\\
 \emptyset,&k+1< i \leq K,
 \end{cases}\label{eqn: singular subdifferential intersections}\\
 \subdiff{\convpot}{x_0}&=\ch\left(\bigcup_{1\leq i\leq k+1}\{D\convpot[i](x_0)\}\right),\label{eqn: singular subdifferential representation}\\
 \convpot(x_0)&=\convpot[i](x_0),\ 1\leq i\leq k+1\label{eqn: equality at singular point},\\
 \convpot(x_0)&>\convpot[i](x_0),\ k+1<i\leq K.\label{eqn: other potentials are lower at singular point}
\end{align}
Additionally, $\dim\subdiff{\convpot}{x_0}=k$.
\end{lem}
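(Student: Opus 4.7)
The plan is to extract each conclusion sequentially from the decomposition $u = \max_{1 \le i \le K} u_i$ of Lemma \ref{lem: u is a maximum of potentials}, combined with the $C^1$ regularity and uniqueness clauses of Proposition \ref{prop: u is a maximum of C^1 potentials}. First I would invoke Proposition \ref{prop: u is a maximum of C^1 potentials} on each $\targetpiece[i]$ for $i \le k+1$: the strict convexity of $\coord{\targetpiece[i]}{x_0}$ (interpreted as strict $c$-convexity of $\targetpiece[i]$ with respect to $\outerdom$) yields $u_i \in C^1(\outerdom)$, so $\subdiff{u_i}{x_0} = \{Du_i(x_0)\} \subset \coord{\targetpiece[i]}{x_0}$ by \eqref{eqn: gradient maps into pieces}, and moreover $\csubdiff{u}{x_0} \cap \targetpiece[i]$ contains at most one point.

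The next step is to identify the active set $I := \{i \mid u(x_0) = u_i(x_0)\}$ with $\{1,\ldots,k+1\}$ by showing the equivalence $j \in I \iff \csubdiff{u}{x_0} \cap \targetpiece[j] \neq \emptyset$. The forward direction uses that any $\ybar \in \csubdiff{u_j}{x_0} \subset \targetpiece[j]$ (non-empty by $c$-convexity of $u_j$) satisfies $u_j(x_0) + u_j^c(\ybar) + c(x_0, \ybar) = 0$; combined with $u^c \le u_j^c$ (which follows from $u \ge u_j$) and the $c$-Young inequality $u(x_0) + u^c(\ybar) + c(x_0, \ybar) \ge 0$, equality $u_j(x_0) = u(x_0)$ forces $\ybar \in \csubdiff{u}{x_0}$. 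The reverse direction follows from the definition \eqref{eqn: piecewise c-transform} of $u_j$: for any $\ybar \in \csubdiff{u}{x_0} \cap \targetpiece[j]$ one has $u_j(x_0) \ge -c(x_0, \ybar) - u^c(\ybar) = u(x_0)$, hence $j \in I$ since $u_j \le u$ always. Applying the multiplicity hypothesis and the reordering then yields $I = \{1, \ldots, k+1\}$, establishing \eqref{eqn: equality at singular point} and \eqref{eqn: other potentials are lower at singular point}.

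With $I$ identified, Lemma \ref{lem: dm} applied to the active indices, together with differentiability of $u_1, \ldots, u_{k+1}$ at $x_0$, gives \eqref{eqn: singular subdifferential representation} immediately. For \eqref{eqn: singular subdifferential intersections}, the inclusion $\{Du_i(x_0)\} \subset \subdiff{u}{x_0} \cap \coord{\targetpiece[i]}{x_0}$ is clear from the preceding step. For the reverse inclusion (or its emptiness when $i > k+1$) I would use that, under \eqref{MTW}, Loeper's theorem (\cite[Theorem 3.1]{Loeper09}) identifies $\subdiff{u}{x_0}$ with $\coord{\csubdiff{u}{x_0}}{x_0}$: any $p \in \subdiff{u}{x_0} \cap \coord{\targetpiece[i]}{x_0}$ corresponds via the diffeomorphism \eqref{B1} to an element of $\csubdiff{u}{x_0} \cap \targetpiece[i]$, which for $i \le k+1$ is the single point $\cExp{x_0}{Du_i(x_0)}$ by Proposition \ref{prop: u is a maximum of C^1 potentials}, and for $i > k+1$ is empty by the multiplicity count. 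Finally, \eqref{eqn: singular subdifferential representation} realizes $\subdiff{u}{x_0}$ as the convex hull of $k+1$ vertices $Du_i(x_0)$, one chosen from each set in the affinely independent collection $\{\coord{\targetpiece[i]}{x_0}\}_{i=1}^{k+1}$; affine independence of the collection forces affine independence of the vertices, yielding $\dim \subdiff{u}{x_0} = k$.

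The main obstacle is the reverse inclusion in \eqref{eqn: singular subdifferential intersections}, where one must convert a Euclidean subdifferential membership into a $c$-subdifferential membership; this is where the \eqref{MTW} hypothesis enters essentially, via Loeper's identification of $\subdiff{u}{x_0}$ with $\coord{\csubdiff{u}{x_0}}{x_0}$. A secondary subtlety is whether the pointwise strict convexity of $\coord{\targetpiece[i]}{x_0}$ should be read as strict $c$-convexity of $\targetpiece[i]$ with respect to all of $\outerdom$, as is needed to invoke the global $C^1$ regularity of Proposition \ref{prop: u is a maximum of C^1 potentials}.
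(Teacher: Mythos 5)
Your proposal follows essentially the same route as the paper's proof: Proposition \ref{prop: u is a maximum of C^1 potentials} supplies the $C^1$ regularity and the single-point intersections, the identification $\subdiff{u}{x_0}=\coord{\csubdiff{u}{x_0}}{x_0}$ under \eqref{B1}, \eqref{MTW} and (I)--(II) converts the multiplicity hypothesis into statements about $\subdiff{u}{x_0}$, Lemma \ref{lem: dm} with disjointness of the pieces yields \eqref{eqn: singular subdifferential intersections}--\eqref{eqn: other potentials are lower at singular point}, and affine independence forces $\dim\subdiff{u}{x_0}=k$; your $c$-Young/active-set argument is simply an expanded version of the step the paper compresses. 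The one caveat --- for $j>k+1$ you cannot assert $\csubdiff{u_j}{x_0}\subset\targetpiece[j]$, since those pieces need not be $c$-convex, though nonemptiness of $\csubdiff{u_j}{x_0}\cap\targetpiece[j]$ (from \eqref{eqn: gradient maps into pieces}, compactness of $\targetpiece[j]$ and the closed graph of the $c$-subdifferential) is all you need --- is shared by the paper's own argument and is easily repaired.
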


\begin{proof}
 Apply Proposition~\ref{prop: u is a maximum of C^1 potentials} to obtain $\left\{\convpot[i]\right\}_{i=1}^K$. Recall that $\coord{\csubdiff{\convpot}{x_0}}{x_0}=\subdiff{\convpot}{x_0}$ under \eqref{B1}, \eqref{MTW}, and conditions (I)--(II); thus Proposition~\ref{prop: u is a maximum of C^1 potentials} and the fact that the multiplicity of $\convpot$ at $x_0$ relative to $\left\{\targetpiece[i]\right\}_{i=1}^K$ is $k+1$ implies that $\subdiff{\convpot}{x_0}$ intersects exactly $k+1$ of the sets $\coord{\targetpiece[i]}{x_0}$, each at exactly one point.

Re-number the indices $1\leq i \leq K$ so that $\subdiff{\convpot}{x_0}$ intersects $\coord{\targetpiece[i]}{x_0}$ only for $1\leq i\leq k+1$. Since $D\convpot[i](x_0)\in\coord{\targetpiece[i]}{x_0}$ for each $i$, Lemma~\ref{lem: dm} along with the mutual disjointness of the $\targetpiece[i]$ immediately gives  \eqref{eqn: singular subdifferential intersections}, \eqref{eqn: singular subdifferential representation}, \eqref{eqn: equality at singular point}, and \eqref{eqn: other potentials are lower at singular point}.

Finally by \eqref{eqn: singular subdifferential representation}, it is clear that $\affdim\left(\subdiff{\convpot}{x_0}\right)\leq k$. However, if $\affdim\left(\subdiff{\convpot}{x_0}\right)< k$, the collection $\left\{\coord{\targetpiece[i]}{x_0}\right\}_{i=1}^{k+1}$ would fail to be affinely independent, thus we must have equality. This finishes the proof.
\end{proof}

We are now in a situation to appeal to Theorem~\ref{thm: stability theorem} and finish the proof of the stability theorem.
\begin{proof}[Proof of Theorem~\ref{thm: OT stability theorem}]
We first apply Lemma~\ref{lem: with singular point} and reorder indices if necessary to obtain $c$-convex functions $\convpot[i]$, $1\leq i\leq K$ with properties \eqref{eqn: singular subdifferential intersections} through \eqref{eqn: other potentials are lower at singular point}.

 Now fix an $\epsilon>0$ and suppose by contradiction that the theorem fails to hold: then there exist sequences $\threshold[j]\searrow 0$ and $\perturbmeas[j]$ with $\Winfty{\targetmeas}{\perturbmeas[j]}<\threshold[j]$, and optimal potentials $\perturbconvpot{j}$ transporting $\sourcemeas$ to $\perturbmeas[j]$ with cost function $c$, but
 $\perturbconvpot{j}$ does \emph{not} have $\threshold[j]$-multiplicity $k+1$ at each point 
of a codimension $k$, DC submanifold of 
$B_{\epsilon}{\left(x_0\right)}$. Since $c\in C^4(\outerdom\times\outertarget)$ and each $\perturbconvpot{j}$ is $c$-convex, the collection $\left\{\perturbconvpot{j}\right\}_{j=1}^\infty$ is uniformly Lipschitz. Then by Arzel{\`a}-Ascoli (after adding constants to each $\perturbconvpot{j}$, which does not change the $\threshold[j]$-multiplicity of any points) we can extract a subsequence, still indexed by $j$, that converges uniformly. By stability of optimal transport maps (see for example, \cite[Corollary 5.23]{Villani09}) and convexity of $\source$ this limit must be (again, up to adding a constant) equal to $u$. 
 
 Now by taking $j$ large enough we may ensure the sets $\nbhdof[{{\threshold[j]}}]{\targetpiece[i]}$ are mutually disjoint for each $j$; note that by the definition of $\mathcal{W}_\infty$, the assumption $\Winfty{\targetmeas}{\perturbmeas[j]}<\threshold[j]$ implies $\spt\targetmeas^j\subset\bigcup_{i=1}^{K}\nbhdof[{{\threshold[j]}}]{\targetpiece[i]}$. Thus, as in Lemma \ref{lem: u is a maximum of potentials} we obtain 
 \begin{align*}
 \perturbconvpot[i]{j}(x):&=\sup_{\xbar\in\nbhdof[{{\threshold[j]}}]{\targetpiece[i]}}(-c(x, \xbar)-\left(u^j\right)^{c}(\xbar)),\\
\perturbconvpot{j}(x)&=\max_{1\leq i\leq K}{\perturbconvpot[i]{j}(x)},
\end{align*}
for $x\in\source$ as long as $j$ is large enough. We also comment here that $\perturbconvpot[i]{j}$ converges uniformly to $\convpot[i]$ for each $1\leq i\leq K$, while the compactness of each set $\nbhdof[{{\threshold[j]}}]{\targetpiece[i]}$ implies that 
\begin{align}\label{eqn: each perturbed csubdifferential touches a piece}
\csubdiff{\perturbconvpot[i]{j}}{x}\cap \nbhdof[{{\threshold[j]}}]{\targetpiece[i]}\neq\emptyset,\quad\forall\;x\in\outerdom.
\end{align}

We can now take a local coordinate system near $x_0$ to view all functions as defined in a subset of $\R^n$, since all $\perturbconvpot[i]{j}$ and $\convpot[i]$ are $c$-convex they have uniformly bounded constant of semi-convexity near $x_0$ (see \cite[Proposition C.2]{GangboMcCann96}). Thus $\convpot$ and $\left\{\perturbconvpot{j}\right\}_{j=1}^\infty$ satisfy the conditions of Theorem~\ref{thm: stability theorem}, and for $j$ sufficiently large, we obtain existence of a 
DC submanifold $\perturbsing[n-k]{j}\subset B_{\epsilon}{\left(x_0\right)}$ of codimension $k$
 satisfying $\dim \subdiff{\perturbconvpot{j}}{x} \ge k$ for every $x\in \perturbsing[n-k]{j}$. 

At this point, fix any $x\in\perturbsing[n-k]{j}$. By \eqref{eqn: perturbed touching} and Lemma~\ref{lem: dm} we see that 
\begin{align*}
 \csubdiff{\perturbconvpot{j}}{x}&=\cExp{x}{\subdiff{\perturbconvpot{j}}{x}}\\
 &=\cExp{x}{\ch{\left(\bigcup_{1\leq i\leq k+1}\subdiff{\perturbconvpot[i]{j}}{x}\right)}},
\end{align*}
thus \eqref{eqn: each perturbed csubdifferential touches a piece} implies that for $j$ large enough $\perturbconvpot{j}$ has $\threshold[j]$-multiplicity at least $k+1$ at $x$. On the other hand by the mutual disjointness of $\left\{\targetpiece[i]\right\}_{i=1}^K$ and recalling $\subdiff{\perturbconvpot[i]{j}}{x}=\coord{\csubdiff{\perturbconvpot[i]{j}}{x}}{x}$, Lemma~\ref{lem: uniform convergence of subdifferentials} yields that for $j$ large enough, $1\leq i\leq k+1$, 
and $i \neq i' \le K$, we have $\csubdiff{\perturbconvpot[i]{j}}{x}\cap\nbhdof[{{\threshold[j]}}]{\targetpiece[i']}=\emptyset$; in particular this implies $\perturbconvpot{j}$ has $\threshold[j]$-multiplicity no more than $k+1$ at $x$. Thus if $j$ is large enough, $\perturbconvpot{j}$ has $\threshold[j]$-multiplicity exactly $k+1$ at every point in $\perturbsing[n-k]{j}$, which finishes the proof by contradiction.
\end{proof}

\appendix
\section{Failure of stability without affine independence}\label{appendix}
In this appendix, we provide an example to illustrate the importance of the affine independence condition on the support of the target measure in Theorem \ref{thm: OT stability theorem}. Note simply by definition, no collection of $n+2$ or more sets can be affinely independent in $\R^n$. The example we illustrate below has a target measure on $\R^2$ whose support consists of four strictly convex sets, and the associated optimal potential has a point of multiplicity $4$ which is unstable under certain $\mathcal{W}_\infty$ perturbations. The source measure will have constant density, and the target measure will be absolutely continuous with density bounded from above. This density does not have a lower bound away from zero in its whole support, so it does not exactly satisfy all of the remaining (i.e. other than affine independence) hypotheses of Theorem \ref{thm: OT stability theorem}, but we comment that the resulting optimal potential is an envelope of globally $C^1$ functions, which is the only way in which these other conditions are required in the proof of this theorem. In particular, this example strongly suggests that to obtain stability there must be some restriction on the multiplicity in relation to the ambient dimension.


\begin{prop}\label{ex: unstable example}
 Let $c(x, \xbar)=-\euclidean{x}{\xbar}$ on $\R^2\times\R^2$. Denoting points $(x, y)\in \R^2$,  let 
 \begin{align*}
 D:=\left\{(x, y)\in \R^2\mid x^2-r_0^2\leq y\leq r_0^2-x^2\right\}
 \end{align*}
where $r_0>0$ is a small constant to be determined, and take $\sourcemeas$ to be the uniform probability measure on $D$ (see Figure \ref{figure: fig1}). Also define the function
\begin{align*}
 u=\max_{1\leq i\leq 4}u_i
\end{align*}
where
\begin{align*}
 u_1(x, y)&=x^2+y^2-x^6+y,\\
 u_2(x, y)&=4x^2+y^2-y^6+x-3xy,\\
 u_3(x, y)&=4x^2+y^2-y^6-x+3xy,\\
 u_4(x, y)&=4y^4+y^2-\norm{x}^3+y^2\max\left\{0,-\sgn(y)\right\}+3\norm{x}^{\frac{3}{2}},
\end{align*}
and take $\targetmeas$ to be the pushforward of $\sourcemeas$ under $D u$. Then $\targetmeas$ is absolutely continuous with density bounded away from infinity on its support, $\target$ is the disjoint union of nonempty, compact, strictly convex sets $\{\targetpiece[1],\ldots, \targetpiece[4]\}$, each $u_i\in C^1(\R^n)$, and $u$ has a singularity of multiplicity $4$ at $(0, 0)$ relative to this collection. Moreover, for any $\delta>0$ there exists a sequence of measures $\nu^j$ converging to $\targetmeas$ in $\mathcal{W}_\infty$ for which the associated optimal potentials mapping $\sourcemeas$ to $\nu^j$ do not have any singularities of $\delta$-multiplicity $4$ relative to $\{\targetpiece[1],\ldots, \targetpiece[4]\}$.
\end{prop}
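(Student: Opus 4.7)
First, I would verify the structural claims. Compute $\nabla u_1(0,0)=(0,1)$, $\nabla u_2(0,0)=(1,0)$, $\nabla u_3(0,0)=(-1,0)$, and $\nabla u_4(0,0)=(0,0)$; the terms $|x|^3$, $|x|^{3/2}$, and $y^2\max\{0,-\sgn(y)\}$ appearing in $u_4$ each have continuous gradient vanishing at $0$, so $u_4\in C^1(\R^2)$. The crucial geometric observation is that $(0,0)$ is not a vertex of but lies on the edge $\overline{(1,0)(-1,0)}$ of the triangle $\ch\{(0,1),(1,0),(-1,0)\}$. Since $u_i(0,0)=0$ for $i=1,\ldots,4$, Lemma \ref{lem: dm} gives $\partial u(0,0)=\ch\{(0,1),(1,0),(-1,0),(0,0)\}=\ch\{(0,1),(1,0),(-1,0)\}$, and each $\bar\Omega_i$ contains $\nabla u_i(0,0)\in\partial u(0,0)$, yielding multiplicity $4$ at the origin relative to $\{\bar\Omega_i\}_{i=1}^{4}$.

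For the instability, I would translate the $\bar\Omega_4$ component of $\nu$ upward: define $\nu^j := \nu|_{\bar\Omega_1\cup\bar\Omega_2\cup\bar\Omega_3} + (T_j)_\#(\nu|_{\bar\Omega_4})$ with $T_j(\bar y)=\bar y+\epsilon_j e_2$ and $\epsilon_j\searrow 0$, giving $\Winfty{\nu^j}{\nu}\leq\epsilon_j$. Because the four pieces are centered near $(0,1),(1,0),(-1,0),(0,0)$ at pairwise distance $\geq 1-\epsilon_j$, Lemma \ref{lem: u is a maximum of potentials} decomposes the new optimal potential as $u^j=\max\{u_1,u_2,u_3,u_4^j\}$, where only the fourth function is perturbed; by the translation equivariance of quadratic-cost OT on the perturbed piece, $u_4^j(x,y)=u_4(x,y)+\epsilon_j y+C_j$ for a constant $C_j$ determined by the mass-balance constraint $\mu(R_4^j)=\mu(R_4)$ (the remaining three mass constraints force only $O(\epsilon_j)$ adjustments to the other $u_i$'s which do not affect the analysis at the origin).

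The central technical step, which I expect to be the main obstacle, is to show $C_j>0$ for all large $j$. I would carry out a first-order perturbation analysis: to leading order in $\epsilon_j$, mass balance takes the form
\begin{align*}
\epsilon_j\int_{\partial R_4}\frac{y\,d\mathcal{H}^1}{|\nabla(u_4-u_i)|}+C_j\int_{\partial R_4}\frac{d\mathcal{H}^1}{|\nabla(u_4-u_i)|}=0
\end{align*}
(with $i$ varying across the piecewise components of $\partial R_4$), so $C_j$ has sign opposite to the first integral. The asymmetric term $y^2\max\{0,-\sgn(y)\}$ in $u_4$ makes $u_4$ strictly larger on $\{y<0\}$, so $\partial R_4$ extends further into $\{y<0\}$ than into $\{y>0\}$; the first integral is therefore strictly negative, and hence $C_j>0$ of order $\epsilon_j$.

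Given $C_j>0$, I would verify no point of $\source$ carries $\delta$-multiplicity $4$ for $u^j$ by cataloguing the critical strata. At the origin, $u_4^j(0)=C_j>0=u_i(0)$ places the origin in the interior of $R_4^j$, so the unique solution in $D$ of $u_1=u_2=u_3$ (the origin, to leading order) no longer belongs to $\{u^j=u_i\}_{i=1,2,3}$, and the original full-triangle subdifferential disappears. On the curve $\{u_2=u_3\}\cap D = \{x=0\}\cap D$, a direct computation reduces $u_4^j-u_2$ to the quadratic form $y^2+\epsilon_j y+C_j$ (in $y<0$), whose discriminant $\epsilon_j^2-4C_j$ is negative for $C_j$ of order $\epsilon_j$, so the form is strictly positive and $\{u^j=u_2=u_3\}=\emptyset$. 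This removes the ``bridge'' segment $\overline{\nabla u_2,\nabla u_3}$ through $(0,0)$ from ever appearing in $\partial u^j$. Every remaining stratum (interiors of $R_i^j$, double boundaries involving $u_4^j$, and triple boundaries $\{u_4^j=u_i=u_k\}$) yields $\partial u^j(x)$ equal to the convex hull of at most three of the four gradients $\approx\{(0,1),(1,0),(-1,0),(0,\epsilon_j)\}$; any such hull omits at least one of $(0,1),(1,0),(-1,0)$ and therefore lies at distance $\geq 1-\epsilon_j>\delta$ from the corresponding $\bar\Omega_i$, forcing $\delta$-multiplicity at most $3$ everywhere in $\source$.
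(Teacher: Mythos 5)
Your verification of the structural claims is incomplete: you compute the four gradients at the origin, note $u_4\in C^1$, and deduce multiplicity $4$ there, but the bulk of what the proposition asserts --- that each $u_i$ is convex on $D$ for $r_0$ small, that the coincidence cells are $U_1=\{y\ge\norm{x}\}\cap D$, $U_{2,3}=\{\pm x\ge 0,\,-\sqrt{\norm{x}}\le y\le \norm{x}\}\cap D$, $U_4=\{y\le-\sqrt{\norm{x}}\}\cap D$, that the images $\targetpiece[i]=\nabla u_i(U_i)$ are nonempty, compact, disjoint and \emph{strictly convex}, and that $\nu$ has density bounded above --- is left undischarged. These are not decorative: strict convexity of $\targetpiece[4]$ (and the fact that $\targetpiece[2],\targetpiece[3],\targetpiece[4]$ lie in $\{y\le 0\}$ and touch the $x$-axis) is exactly what any instability argument, yours included, must lean on, and the paper devotes most of its proof to precisely these boundary-curve convexity computations.

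The genuine gap, however, is in your instability argument. The ansatz $u^j=\max\{u_1,u_2,u_3,u_4+\epsilon_j y+C_j\}$ is not the optimal potential for $\nu^j$: translation equivariance would give $u_4^j=u_4+\epsilon_j y+\mathrm{const}$ only if the cell transported to the shifted piece and the conditional source measure on it were unchanged, but the free boundaries move, so the gradient of your candidate does not push $\mu$ forward to $\nu^j$; all four functions produced by Lemma \ref{lem: u is a maximum of potentials} change simultaneously. The assertion that the other three receive only ``$O(\epsilon_j)$ adjustments which do not affect the analysis'' is precisely what would require proof, and your determination of the sign of $C_j$ is a formal shape-derivative identity whose linearization, validity and sign are all unestablished --- yet your entire strata catalogue (the formula $u_4^j-u_2=y^2+\epsilon_j y+C_j$ on $\{x=0\}$, the uniqueness of the triple point $u_1=u_2=u_3$, the emptiness of $\{u^j=u_2=u_3\}$) rests on these exact expressions. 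The paper's proof avoids any expansion of $u^j$: since the extreme points of $\partial u^j(x)$ lie in $\spt\nu^j$, a point of $\delta$-multiplicity $4$ would force $\partial u^j(x)$ to contain points of $\targetpiece[1],\targetpiece[2],\targetpiece[3]$, whose convex hull must cross the interior of the lifted strictly convex piece $\targetpiece[4]+\tfrac{\delta}{j}e_2$ (here the half-plane geometry enters), and this contradicts the strict convexity of the dual potential on the interior of a strictly convex target component, i.e.\ the argument of Proposition \ref{prop: u is a maximum of C^1 potentials}. To salvage your quantitative route you would need uniform stability of the decomposition $u^j=\max_i u_i^j$ together with control of the moving free boundaries --- a substantial undertaking which your proposal does not supply --- whereas the soft convexity argument requires no information about $u^j$ beyond generalities already proved in the paper.
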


\begin{proof}
First, we mention the choice of $r_1$ is taken so that the line $y=-r_1$ passes through the intersection of the curves $y=x^2-r_0^2$ and $y=-\norm{x}$. Thus it is easy to see that $D$ is convex.

Second, we note that $u_1,\ldots u_4$ are convex on $D$ if $r_0$ is sufficiently small. Indeed, since we are in two dimensions, the characteristic polynomial of the Hessian matrix of a $C^2$ function $f$ is $\lambda^2-\Delta f \lambda + \det D^2f$. Thus by the quadratic formula, if $\Delta f\geq 0$ and $\det D^2f\geq 0$ both eigenvalues will be nonnegative, hence $f$ will be convex. This immediately gives the convexity of $u_1$, $u_2$, $u_3$ by a quick calculation near the origin. For $u_4$, we can first see $3\norm{x}^{\frac{3}{2}}-\norm{x}^3$ is a convex function of one variable in $\R$ near zero (by calculating the subdifferential of the function), hence also as a function on $\R^2$. Then the remaining terms are also clearly convex, thus so is their sum $u_4$.

Next, if we let $U_i:=\left\{u=u_i\right\}\cap D$, some tedious but routine calculations yield that
\begin{align*}
 U_1&=\left\{(x, y)\in D\mid y\geq \norm{x}\right\},\\
 U_2&=\left\{(x, y)\in D\mid x\geq 0,\ -\sqrt{\norm{x}}\leq y\leq \norm{x}\right\},\\
 U_3&=\left\{(x, y)\in D\mid x\leq 0,\ -\sqrt{\norm{x}}\leq y\leq \norm{x}\right\},\\
 U_4&=\left\{(x, y)\in D\mid y\leq -\sqrt{\norm{x}}\right\},
\end{align*}
for $r$ small enough. We will show that $D u_i(U_i)$ is a strictly convex set for each $i$.

Before embarking on this verification, let us record
\begin{align*}
 \nabla u_1(x, y)&=(2x-6x^5, 2y+1),\\
  \nabla u_2(x, y)&=(8x+1-3y, 2y-6y^5-3x),\\
   \nabla u_3(x, y)&=(8x-1-3y, 2y-6y^5+3x),\\
    \nabla u_4(x, y)&=(\sgn(x)(\frac{9}{2}\norm{x}^{\frac{1}{2}}-3x^2), 16y^3+2(1+max\left\{0,-\sgn(y)\right\})y).
\end{align*}
The idea will be to take a portion of $U_i^\bdry$ and write it parametrically as $\gamma(t)$. Then we can write 
\begin{align*}
(f(t), g(t)):=\nabla u_i(\gamma(t)),
\end{align*}
and consider one of either 
\begin{align*}
 y(x):&=g(f^{-1}(x)),\quad
 x(y):=f(g^{-1}(y)),
\end{align*}
(i.e., we write one of the coordinates as a function of the other, and consider the image of the boundary curve as the graph of this function). By determining the strict convexity or concavity of these functions (depending on which variable we have solved for, and which side the image of $U_i$ lies), we can then conclude strict convexity of $\nabla u_i(U_i)$ (see Figure \ref{figure: fig1} below for a rough sketch of these regions, diagram is not to scale). We will either directly solve for a variable and verify convexity / concavity, or use the formulae
\begin{align*}
(f^{-1})'(x)&=\frac{1}{f'(f^{-1}(x))},\quad
(f^{-1})''(x)=-\frac{f''(f^{-1}(x))}{(f'(f^{-1}(x)))^3}
\end{align*}
to see
\begin{align}
 y''(x)&=g''(f^{-1}(x))((f^{-1})'(x))^2+g'(f^{-1}(x))(f^{-1})''(x)\notag\\
 &=((f^{-1})'(x))^2(g''(f^{-1}(x))-g'(f^{-1}(x))\frac{f''(f^{-1}(x))}{f'(f^{-1}(x))}).\label{eqn: boundary curve convexity}
\end{align}
(or their analogues if considering $x(y)$). 

\begin{figure}[H] 
  \centering
    \includegraphics[width=\textwidth]{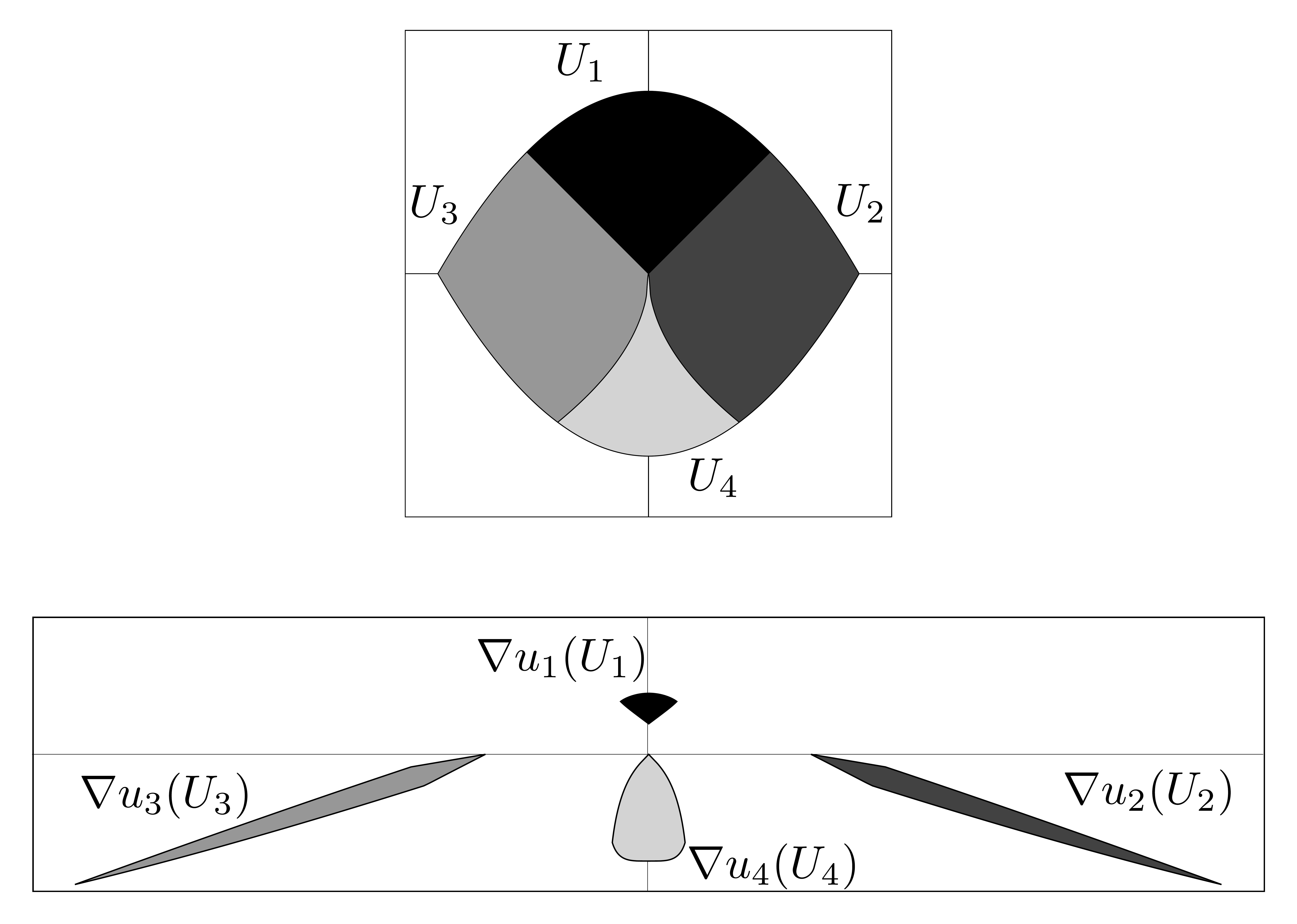}
     \caption{}\label{figure: fig1}
\end{figure}
The remainder is a series of calculations, below $r_1$ and $r_2$ are the $x$-coordinates of the intersection of the line $y=x$ with the curve $y=r_0^2-x^2$, and the intersection of the curves $y=-\norm{x}^{\frac{1}{2}}$ and $y=x^2-r_0^2$ respectively, note that $r_1$, $r_2<r_0$, hence we will always be in the situation $\norm{t}\leq r_0$ in the calculations below.

\begin{framed}
\begin{align*}
 &\bm{\nabla u_1(U_1^\bdry\cap \{y=r_0^2-x^2\})},\ \bm{\gamma(t):=(t, r_0^2-t^2)},\\
 &\bm{-r_1\leq t\leq r_1}, \textbf{ region below curve}.
\end{align*} 
\end{framed}
Then 
\begin{align*}
(f(t), g(t))&=(2t-6t^5, 2(r^2-t^2)+1),\\
(f'(t), g'(t))&=(2-30t^4, -4t),\\
(f''(t), g''(t))&=(-120t^3, -4)
\end{align*}
so for $t=f^{-1}(x)$, 
\begin{align*}
\frac{y''(x)}{((f^{-1})'(x))^2}=-4-\frac{480t^4}{2-30t^4}<0,
 \end{align*}
if $r_0$ is small so $y$ is strictly concave.
 
%
 \begin{framed}
\begin{align*}
 &\bm{\nabla u_1(U_1^\bdry\cap \{y=x\})},\ \bm{\gamma(t):=(t, t)},\\
 &\bm{0\leq t\leq r_1}, \textbf{ region above curve}.
\end{align*} 
\end{framed}
Then
\begin{align*}
 (f(t), g(t))=(2t-6t^5, 2t+1),
\end{align*}
so directly solving:
\begin{align*}
 x(y)=f(g^{-1}(y))=f(\frac{y-1}{2})=y-1-\frac{6}{2^5}(y-1)^5
\end{align*}
 which is strictly concave. We make a similar calculation for $U_1^\bdry\cap \left\{y=-x\right\}$, then since $\nabla u_1$ maps vertical line segments to vertical line segments with the same orientation and the first coordinate is strictly increasing as long as $r_0$ is small, this shows that $\nabla u_1(U_1)$ is a strictly convex set.
 
 \begin{framed}
\begin{align*}
 &\bm{\nabla u_2(U_2^\bdry\cap \{y=x\})},\ \bm{\gamma(t):=(t, t)},\\
 &\bm{0\leq t\leq r_1}, \textbf{ region below curve}.
\end{align*} 
\end{framed}
Then
\begin{align*}
 (f(t), g(t))=(8t+1-3t, 2t-6t^5-3t)=(5t+1, -t-6t^5),
\end{align*}
directly solving,
\begin{align*}
 y(x):=g(f^{-1}(x))=g(\frac{x-1}{5})=\frac{1-x}{5}-\frac{6(x-1)^5}{5^5}
\end{align*}
which is strictly concave.

 \begin{framed}
\begin{align*}
 &\bm{\nabla u_2(U_2^\bdry\cap \{y=-\sqrt{\norm{x}}\})},\ \bm{\gamma(t):=(t, -t^{\frac{1}{2}})},\\
 &\bm{0\leq t\leq r_2}, \textbf{ region above curve}.
\end{align*} 
\end{framed}
Then
\begin{align*}
 (f(t), g(t))&=(8t+1+3t^{\frac{1}{2}}, -2t^{\frac{1}{2}}+6t^{\frac{5}{2}}-3t),\\
   (f'(t), g'(t))&=(8+\frac{3}{2}t^{-\frac{1}{2}}, -t^{-\frac{1}{2}}+15t^{\frac{3}{2}}-3),\\
   (f''(t), g''(t))&=(-\frac{3}{4}t^{-\frac{3}{2}}, \frac{t^{-\frac{3}{2}}}{2}+\frac{45}{2}t^{\frac{1}{2}}),
\end{align*}
for $t=f^{-1}(x)$, 
\begin{align*}
\frac{y''(x)}{((f^{-1})'(x))^2}&=\frac{t^{-\frac{3}{2}}}{2}+\frac{45}{2}t^{\frac{1}{2}}+\frac{\frac{3}{4}t^{-\frac{3}{2}}(-t^{-\frac{1}{2}}+15t^{\frac{3}{2}}-3)}{8+\frac{3}{2}t^{-\frac{1}{2}}}\\
&\geq t^{-\frac{3}{2}}\left(\frac{1}{2}-\frac{3(t^{-\frac{1}{2}}+3)}{4(8+\frac{3}{2}t^{-\frac{1}{2}})}\right)\\
&\geq t^{-\frac{3}{2}}\left(\frac{1}{2}-\frac{3(1+3\sqrt{t})}{4(8\sqrt{t}+\frac{3}{2})}\right)\geq t^{-\frac{3}{2}}\left(\frac{1}{2}-\frac{3(1+3\sqrt{r_2})}{6}\right)>0
 \end{align*}
 if $r_0$ is small enough, making $y$ strictly convex.

%
%
  \begin{framed}
\begin{align*}
 &\bm{\nabla u_2(U_2^\bdry\cap \{y=r_0^2-x^2\})},\ \bm{\gamma(t):=(t, r_0^2-t^2)},\\
 &\bm{r_1\leq t\leq r_0}, \textbf{ region below curve}.
\end{align*} 
\end{framed}

\begin{align*}
 (f(t), g(t))&=(8t+1-3(r_0^2-t^2), 2(r_0^2-t^2)-6(r_0^2-t^2)^5-3t),\\
  (f'(t), g'(t))&=(8+6t, -4t+60t(r_0^2-t^2)^4-3),\\
   (f''(t), g''(t))&=(6, -4+60(r_0^2-t^2)^4-480t^2(r_0^2-t^2)^3),
\end{align*}
for $t=f^{-1}(x)$, 
\begin{align*}
\frac{y''(x)}{((f^{-1})'(x))^2}&=-4+60(r_0^2-t^2)^4-480t^2(r_0^2-t^2)^3+\frac{6(4t-60t(r_0^2-t^2)^4+3)}{8+6t}\\
&\leq -4+60r_0^8+\frac{3(4t+3)}{4}\leq -4+60r_0^8+\frac{9}{4}+3r_0<0
 \end{align*}
 when $r_0$ is small, so $y$ is strictly concave.

  \begin{framed}
\begin{align*}
 &\bm{\nabla u_2(U_2^\bdry\cap \{y=x^2-r_0^2\})},\ \bm{\gamma(t):=(t, t^2-r_0^2)},\\
 &\bm{r_2\leq t\leq r_0}, \textbf{ region above curve}.
\end{align*} 
\end{framed}

\begin{align*}
 (f(t), g(t))&=(8t+1-3(t^2-r_0^2), 2(t^2-r_0^2)-6(t^2-r_0^2)^5-3t),\\
  (f'(t), g'(t))&=(8-6t, 4t-60t(t^2-r_0^2)^4-3),\\
   (f''(t), g''(t))&=(-6, 4-60(t^2-r_0^2)^4-480t^2(t^2-r_0^2)^3),
\end{align*}
for $t=f^{-1}(x)$, 
\begin{align*}
\frac{y''(x)}{((f^{-1})'(x))^2}&=4-60(t^2-r_0^2)^4-480t^2(t^2-r_0^2)^3+\frac{6(4t-60t(t^2-r_0^2)^4-3)}{8-6t}\\
&\geq 4-60r_0^8-\frac{6(60t(r_0^2-t^2)^4+3)}{8-6t}\geq 4-60r_0^8-\frac{6(60r_0^9+3)}{8-6r_0}>0
 \end{align*}
 when $r_0$ is small, so $y$ is strictly convex.
  
Since $\nabla u_2$ maps all horizontal lines to lines with the same slope, the above verifications give strict convexity of $\nabla u_2(U_2)$, a symmetric argument shows the strict convexity of $\nabla u_3(U_3)$.

  \begin{framed}
\begin{align*}
 &\bm{\nabla u_4(U_4^\bdry\cap \{y=-\sqrt{\norm{x}}\})},\ \bm{\gamma(t):=(t, -t^{\frac{1}{2}})},\\
 &\bm{0\leq t\leq r_2}, \textbf{ region below curve}.
\end{align*} 
\end{framed}

\begin{align*}
 (f(t), g(t))&=(\frac{9}{2}t^{\frac{1}{2}}-3t^2, -16t^{\frac{3}{2}}-3t^{\frac{1}{2}}),\\
 (f'(t), g'(t))&=(\frac{9}{4}t^{-\frac{1}{2}}-6t, -24t^{\frac{1}{2}}-\frac{3}{2}t^{-\frac{1}{2}}),\\
  (f''(t), g''(t))&=(-\frac{9}{8}t^{-\frac{3}{2}}-6, -12t^{-\frac{1}{2}}+\frac{3}{4}t^{-\frac{3}{2}}),
\end{align*}
for $t=f^{-1}(x)$, 
\begin{align*}
\frac{y''(x)}{((f^{-1})'(x))^2}&=-12t^{-\frac{1}{2}}+\frac{3}{4}t^{-\frac{3}{2}}-\frac{(\frac{9}{8}t^{-\frac{3}{2}}+6)(24t^{\frac{1}{2}}+\frac{3}{2}t^{-\frac{1}{2}})}{\frac{9}{4}t^{-\frac{1}{2}}-6t}\\
&< \frac{3}{4}t^{-\frac{3}{2}}-\frac{(\frac{9}{8}t^{-\frac{3}{2}})(\frac{3}{2}t^{-\frac{1}{2}})}{\frac{9}{4}t^{-\frac{1}{2}}}\leq t^{-\frac{3}{2}}\left(\frac{3}{4}-\frac{3}{4}\right)=0
 \end{align*}
 when $r_0$ is small, so $y$ is strictly concave. A symmetric calculation holds for the boundary curve where $x\leq 0$.
 
\begin{framed}
\begin{align*}
 &\bm{\nabla u_4(U_4^\bdry\cap \{y=x^2-r_0^2\})},\ \bm{\gamma(t):=(t, x^2-r_0^2)},\\
 &\bm{-r_2\leq t\leq r_2}, \textbf{ region above curve}.
\end{align*} 
\end{framed}

\begin{align*}
 (f(t), g(t))&=(\frac{9}{2}t^{\frac{1}{2}}-3t^2, 16(t^2-r_0^2)^3+3(t^2-r_0^2)),\\
 (f'(t), g'(t))&=(\frac{9}{4}t^{-\frac{1}{2}}-6t, 96t(t^2-r_0^2)^2+6t),\\
  (f''(t), g''(t))&=(-\frac{9}{8}t^{-\frac{3}{2}}-6, 96(t^2-r_0^2)^2+384t^2(t^2-r_0^2)+6),
\end{align*}
this time for $t=g^{-1}(y)$ and $t>0$, 
\begin{align*}
x''(y)&=f''(g^{-1}(y))-f'(g^{-1}(y))\frac{g''(g^{-1}(y))}{g'(g^{-1}(y))}\\
&=-\frac{9}{8}t^{-\frac{3}{2}}-6-\frac{(\frac{9}{4}t^{-\frac{1}{2}}-6t)(96(t^2-r_0^2)^2+384t^2(t^2-r_0^2)+6)}{96t(t^2-r_0^2)^2+6t}\\
&<0
 \end{align*}
 when $r_0$ is small, so $x$ is a strictly concave function of $y$ when $x>0$. A symmetric argument holds when $x<0$. When $x=0$, we find the tangent line to $\nabla u_4(U_4)$ at the boundary point $(0, -16r_0^6-3r_0^2)$ is the horizontal line through that point, which is easily seen to lie below $\nabla u_4(U_4)$, touching only at $(0, -16r_0^6-3r_0^2)$, with a similar argument for the tangent line at $(0, 0)$. Since $\nabla u_4$ sends vertical lines to vertical lines with the same orientation, this shows $\nabla u_4(U_4)$ is strictly convex, completing the verification.

Finally, we easily see that $u_i$ is strictly convex for each $i$, and the above calculation of regions shows $\nabla u$ is injective on the union of the interiors of the $U_i$. A quick calculation shows $\det D^2u_i$ is actually bounded away from zero on $U_i$ for each $i$, this gives that $\nu$ is absolutely continuous with density bounded away from infinity (in fact, this density is actually bounded away from zero on the images of $U_1$, $U_2$, and $U_3$).
 
 Now we can see that $\nabla u_2(U_2)$, $\nabla u_3(U_3)$, and $\nabla u_4(U_4)$ all lie in the half space $\{(x, y)\in\R^2\mid y\leq 0\}$ and all have nonempty intersections with the $x$-axis. Fix any $\delta>0$, we now take the sequence of measures $\{\nu^j\}_{j=1}^\infty$ to be $\targetmeas$, but with the set $\nabla u_4(U_4)$ shifted upward by $\delta/j$, it is clear that $\Winfty{\targetmeas}{\nu^j}\leq \delta/j$. Let $u^j$ be an optimal potential transporting $\mu$ to $\nu^j$, and suppose there is a point $x$ that is a singularity of $\delta$-multiplicity $4$ for $u^j$ relative to $\{\nabla u_1(U_1),\ldots \nabla u_4(U_4)\}$. Since the extremal points of $\subdiff{u}{x}$ must be contained in $\spt\nu^j$, this could only happen if $\subdiff{u}{x}$ intersects both $\nabla u_2(U_2)$ and $\nabla u_3(U_3)$. However this would also force $\subdiff{u}{x}$ to have nonempty intersection with the interior of $\nabla u_4(U_4)+\frac{\delta}{j} e_2$ and we can derive a contradiction by the same argument as in the proof of Proposition \ref{prop: u is a maximum of C^1 potentials}, thus no point can have a $\delta$-multiplicity of $4$.
\end{proof}


\bibliography{mybibliofreediscontinuities}
\bibliographystyle{plain}

\end{document}